\newtheorem{thm}{\sc Theorem.}[section]
\newtheorem{lem}[thm]{\sc Lemma.}
\newtheorem{rem}[thm]{\sc Remark.}
\newenvironment{AMS}%
{{\upshape\bfseries AMS subject classifications. }\ignorespaces}{}
\newenvironment{keywords}{{\upshape\bfseries Key words. }\ignorespaces}{}
\newcommand{\Rplus}{{\mathbb R}_{>0}}
\newcommand{\Rgeq}{{\mathbb R}_{\geq 0}}
\newcommand{\R}{{\mathbb R}}
\newcommand{\Ds}{\mathcal{D}_\Gamma}
\newcommand{\tr}{\operatorname{tr}}
\newcommand{\spa}{\operatorname{span}}
\newcommand{\diam}{\operatorname{diam}}
\newcommand{\vol}{\mathcal{L}^d} 
\newcommand{\GT}{{\mathcal{G}_T}}
\newcommand{\GhT}{{\mathcal{G}^h_T}}
\newcommand{\dH}[1]{\;{\rm d}{\cal H}^{#1}} 
\newcommand{\dL}[1]{\;{\rm d}{\cal L}^{#1}} 
\newcommand{\bigchi}{\ensuremath{\mathrm{\mathcal{X}}}}
\newcommand{\charfcn}[1]{\bigchi_{#1}} 
\newcommand{\Domain}{\Omega}
\newcommand{\Vhz}{\underline{V}(\Gamma^0)}
\newcommand{\Whp}{W(\Gamma^{m+1})}
\newcommand{\Vh}{\underline{V}(\Gamma^m)}
\newcommand{\Vhm}{\underline{V}(\Gamma^{m-1})}
\newcommand{\Wh}{W(\Gamma^m)}
\newcommand{\Vht}{\underline{V}(\Gamma^h(t))}
\newcommand{\Wht}{W(\Gamma^h(t))}
\newcommand{\Whtz}{W(\Gamma^h(0))}
\newcommand{\uspace}{\mathbb{U}}
\newcommand{\utimespace}{\mathbb{V}}
\newcommand{\psispace}{\mathbb{S}}
\newcommand{\pspace}{\mathbb{P}}
\newcommand{\sigmaO}{o}
\newcommand{\nabs}{\nabla_{\!s}}
\newcommand{\Id}{\rm Id}
\newcommand{\id}{\rm id}
\newcommand{\ddt}{\frac{\rm d}{{\rm d}t}}
\newcommand{\matpartu}{\partial_t^\bullet}
\newcommand{\matpartx}{\partial_t^\circ}
\newcommand{\matpartxh}{\partial_t^{\circ,h}}
\DeclareMathOperator{\esssup}{ess\,sup}
\newcommand{\unitn}{\vec{\rm n}}
\newcommand{\unitt}{\vec{\rm t}}
\newcommand{\ek}{e}
\newcommand{\errorXx}{\|\vec X - \vec{x}\|_{L^\infty}}
\newcommand{\errorUu}[1]{\|\vec U - \vec I^h_{#1}\,\vec u\|_{L^\infty}}
\newcommand{\LerrorPpc}{\|P_c - p_c\|_{L^2}}
\newcommand{\LerrorLl}{\|\theta^h - \theta\|_{L^2}}
\newcommand{\XFEMGAMMA}{XFEM$_\Gamma$}
\def\epsilon{\varepsilon} 
\newcommand{\mat}[1]{\underline{\underline{#1}}\rule{0pt}{0pt}}
\def\hat{\widehat}
\def\vL{L\kern-0.08cm\char39}
\begin{document}
\title{
Stable Numerical Approximation of Two-Phase Flow with
a Boussinesq--Scriven Surface Fluid}
\author{John W. Barrett\footnotemark[2] \and 
        Harald Garcke\footnotemark[3]\ \and 
        Robert N\"urnberg\footnotemark[2]}

\renewcommand{\thefootnote}{\fnsymbol{footnote}}
\footnotetext[2]{Department of Mathematics, 
Imperial College London, London, SW7 2AZ, UK}
\footnotetext[3]{Fakult{\"a}t f{\"u}r Mathematik, Universit{\"a}t Regensburg, 
93040 Regensburg, Germany}

\date{}

\maketitle
\begin{abstract}
We 
consider two-phase
Navier--Stokes flow with a Boussinesq--Scriven surface fluid. In such a
fluid the rheological behaviour at the interface includes surface viscosity effects, 
in addition to the classical surface tension effects. 
We introduce and analyze parametric finite element approximations, and show, in
particular, stability results for semi-discrete versions of the methods,
by demonstrating that a free energy inequality also holds on the
discrete level. We perform several numerical simulations for various
scenarios in two and three dimensions, which illustrate the effects
of the surface viscosity. 
\end{abstract} 

\begin{keywords} 
incompressible two-phase flow,  surface viscosity,
Boussinesq--Scriven surface fluid, 
finite elements, parametric method,
stability 
\end{keywords}

\begin{AMS} 35Q35, 65M12, 76D05, 76M10  \end{AMS}

\renewcommand{\thefootnote}{\arabic{footnote}}

\section{Introduction}\label{sec:intro}

Fluid interfaces typically have their own dynamic properties and, in
particular, a surface stress tensor, involving interfacial shear and
dilatational viscosities, can have a significant effect on the
dynamics. Surface tension effects on a fluid interface are well-known,
and in this case the stresses acting on the interface are balanced by
the surface tension and the curvature of the interface. However, 
in systems with high surface area to volume ratios, such as
micro bubbles, blood cells, dispersions of vesicles and emulsions, 
the dynamics of the system are
also highly influenced by the dynamics on
the interface. Hence one can argue, see e.g.\ \cite{Sagis11}, 
that a more detailed study of the stress-deformation behaviour of interfaces is
highly relevant for many disciplines, e.g.\ interface science,
biophysics, pharmaceutical science, polymer physics, food science
and engineering. 

If only surface tension effects are taken into account in the surface
stress tensor $\mat\sigma_\Gamma$, one obtains the form
\begin{equation}\label{surften1}
\mat\sigma_\Gamma = \gamma\,\mat{\mathcal{P}}_\Gamma,
\end{equation}
where $\mat{\mathcal{P}}_\Gamma$ is the projection to the tangent
space of the interfacial surface $\Gamma$, 
and $\gamma$ is the surface tension,
which in the simplest case is constant. In this case the stress
balance on the interface is given as 
\begin{equation}\label{surften2}
-\nabs\,.\, \mat\sigma_\Gamma = [\mat\sigma\, \vec\nu]^+_-
\qquad \Leftrightarrow\qquad  -\gamma\,\varkappa\,\vec\nu = [\mat\sigma\,\vec\nu]^+_-\,.
\end{equation}
Here $\nabs\,.$ is the surface divergence, $[\,\cdot\, ]^+_-$ denotes the
jump of a quantity across the interface, $\mat \sigma$ denotes the bulk fluid stress tensor, 
$\vec\nu$ is the unit normal
to the interface, and $\varkappa$ is the mean curvature, we refer to
Section~\ref{sec:1} for the precise definitions. Equation (\ref{surften1})
expresses the momentum balance at a dividing surface, see
e.g.\ \cite{SlatterySO07}. When the surface tension
coefficient in (\ref{surften1}) is not constant, which is the case
when a surface active agent has an effect on the surface tension, the
stress balance (\ref{surften2}) becomes
\begin{equation*}
-\gamma\,\varkappa\,\vec\nu - \nabs\, \gamma = [\mat\sigma \,\vec\nu]^+_-\,,
\end{equation*}
which in turn gives rise to discontinuities in the tangential
components of the bulk fluid stresses at the surface. However, in general
other interfacial properties, such as the resistance of an interface
to deformation, have to be taken into account. This is
particularly relevant in cases, where the interface is not clean. 
For systems with species that adsorb at the interface,
like emulsions or foams stabilized by surfactants and proteins, it is
expected that the surface stresses have a pronounced effect on the
dynamics. Therefore
the interest in surface rheology has increased significantly in the last
twenty years, see e.g.\ \cite{SlatterySO07}.
One key difference between bulk and surface rheology is that in
the bulk phase one usually assumes incompressibility, whereas this
assumption often does not hold for interfaces -- biomembranes are
a notable exception, see e.g.\ \cite{ArroyoS07}. The general
momentum balance, which generalizes (\ref{surften2}), now, in addition,
has to take the surface momentum and a generalized stress tensor,
involving surface shear and dilatational viscosities, into account. The
overall momentum balance on the surface then reads as 
\begin{equation}\label{NSS1}
\matpartu\,(\rho_\Gamma\,\vec u) + (\nabs\,.\,\vec u)\,\rho_\Gamma\,\vec u
- \nabs\,.\,\mat\sigma_\Gamma =
[\mat\sigma\,\vec\nu]_-^+ \,,
\end{equation}
with the surface stress tensor now given by
\begin{equation*}
\mat\sigma_\Gamma = 2\,\mu_\Gamma\,\mat D_s (\vec u) 
+ ( \lambda_\Gamma \,\nabs\,.\,\vec u + \gamma)
\,\mat{\mathcal{P}}_\Gamma \,.
\end{equation*}
Here $\rho_\Gamma$ is the surface material density, $\vec u$ is the fluid velocity,
$\matpartu$ is the
material derivative on the interface, $\mu_\Gamma$ is the surface
shear viscosity, $\mu_\Gamma+\lambda_\Gamma$ is the surface
dilatational viscosity and $\mat D_s (\vec u) =
\tfrac12\,\mat{\mathcal{P}}_\Gamma\,(\nabs\,\vec u+(\nabs\,\vec u)^T)\,
\mat{\mathcal{P}}_\Gamma$ is the interfacial rate-of-deformation tensor. This
tensor describes how the lengths of curves on the surface change, and
how the angles between intersecting curves change with the flow. 

Although, at first glance, the surface momentum
equation looks very similar to the bulk momentum equation, it turns out that
new geometric quantities appear. For example, we note that in
$\nabs\,.\,\vec u$ we take the divergence of a non-tangential vector
field, which for alternative formulations of (\ref{NSS1}) would
lead to a curvature term. In particular, 
splitting $\vec u$ into its normal part $(\vec u \,.\,\vec \nu)\,\vec\nu$
and its tangential part 
$\vec u_{\rm tan} = \vec u - (\vec u \,.\,\vec \nu)\,\vec\nu$ gives
$\nabs\,.\,\vec u = \nabs\,.\,\vec u_{\rm tan} - 
\vec u \,.\,\vec \nu\,\varkappa$; see e.g.\ \cite{ArroyoS07}.

For the surface material density $\rho_\Gamma$ the mass balance law 
\begin{equation}\label{NSS2}
\matpartu\,\rho_\Gamma + (\nabs\,.\,\vec u)\,\rho_\Gamma = 0
\end{equation}
holds on the surface. Hence (\ref{NSS1}) and (\ref{NSS2}) is a compressible
Navier--Stokes system on an evolving surface
with a forcing $[\mat\sigma \,\vec\nu]^+_-$ arising
from bulk stresses. In this paper we also allow for an insoluble
surface active agent (surfactant), whose concentration we denote by $\psi$. We then
require that the advection-diffusion equation
\begin{equation}\label{NSS3}
\matpartu\,\psi + (\nabs\,.\,\vec u)\,\psi -
\nabs\,.\,(\mathcal{D}_\Gamma\,\nabs\,\psi)=0\,,
\end{equation}
with a diffusion coefficient $\mathcal{D}_\Gamma$, has to hold on the 
interface. In this case, 
the surface viscosities $\lambda_\Gamma$, $\mu_\Gamma$ and the
surface tension $\gamma$ may depend on $\psi$. The system
(\ref{NSS1})--(\ref{NSS3}) then has to be coupled to the classical incompressible Navier--Stokes
system in the bulk, and we refer to Section~\ref{sec:1} for the details. 

The first ideas, which later lead to the surface fluid model discussed
above, are due to \cite{Boussinesq13}, and the approach of Boussinesq
was later generalized to
arbitrary moving and deforming surfaces by \cite{Scriven60}. Hence,
one speaks of a Boussinesq--Scriven surface fluid, and we refer to the
book \cite{SlatterySO07} for more details on the physics of the
model and for experiments on Boussinesq--Scriven surface fluids.

The mathematical literature on models involving Boussinesq--Scriven surface 
fluids
is very sparse. We refer to \cite{BotheP10}, who initiated 
the rigorous mathematical study of
two-phase flows with surface viscosity in the case $\rho_\Gamma = 0$,
i.e.\ when 
no separate mass balance is considered. To the best knowledge of the
authors, only the paper by \cite{ReuskenZ13} contains numerical
simulations of a two-phase flow including a Boussinesq--Scriven surface
fluid. Also in that paper the surface material density was set to be zero
and no surfactants were considered. It is the goal of this paper to
introduce a stable finite element method for two-phase flow with a
Boussinesq--Scriven interface stress tensor, which allows for a surface
material density and an insoluble surfactant. 
Besides showing stability results, we
also present numerical simulations in two and three dimensions, which
show different phenomena arising from the surface viscosity
effects. 

Let us state the main features of the topics studied in this paper. 

\begin{itemize}
\item Our approach is based on a parametric finite element method for the 
numerical approximation of the interface. Such an approach, in the context
of a purely geometric evolution of the interface, 
was introduced by \cite{Dziuk91}, see also the review article 
\cite{DeckelnickDE05}. 
We also use the techniques of \cite{DziukE13} for the approximation
of partial differential equations on surfaces.   
  
\item For one variant of our introduced approximations,
based on the present authors' work, see 
\cite{triplej,gflows3d,spurious,fluidfbp},
the parameterization of the evolving interface has good mesh properties
  and, in contrast to other parametric approaches, no remeshing is
  needed in practice. 

\item A suitable variational formulation of the complex conditions at
  the free boundary is introduced, which allows one to show stability of
  semi-discrete (discrete in space, continuous in time) versions of
  the schemes. This extends the present authors' work on the 
  stable numerical approximation of two-phase 
  flow with insoluble surfactant, see \cite{tpfs}, by including surface 
  viscosity effects and a surface material density. 

\item Fully discrete finite element approximations are introduced,
  which lead to linear systems of equation at each time step. In particular,
existence and uniqueness of the discrete solutions 
can be shown. 
If no surface material density is present, then stability can be shown also 
  for these fully discrete variants. 

\item Conservation properties and non-negativity properties of the surface material density 
and the surfactant can be
  shown for the discretized systems.

\item We present several numerical simulations in two and three space
  dimensions, which demonstrate the convergence of the scheme and illustrate
  several effects of surface viscosity. 
For example, in a shearing experiment one 
  observes that bubbles with higher surface viscosities are
  much less elongated. 
\end{itemize}

The study of numerical methods for two-phase flows is a very active
area, and the available numerical approaches can be broadly grouped
into three different categories: parametric front tracking methods, such as the
approximations presented in this paper, level set methods and phase field
methods; see the introduction in \cite{fluidfbp}. 
For more details, and for further background information on the 
various approaches, we refer, for example, to
\cite{HirtN81,Bansch01,Tryggvason_etal01,
LaiTH08,SussmanO09,GanesanT09a,GrossR11,ChengF12,JemsionLSSMMW13}.
We remark that only \cite{ReuskenZ13} have 
considered the case of a Boussinesq--Scriven surface fluid
numerically. 

The outline of the paper is as follows. In Section~\ref{sec:1} we give
a mathematical formulation of the Navier--Stokes two-phase problem for
a Boussinesq--Scriven surface fluid. Section~\ref{sec:2} states
two semi-discrete approximations of the problem together with several
analytical results such as stability, and conservation 
and non-negativity properties of the approximations to the surface material density and 
the surfactant concentration. In
Section~\ref{sec:3} the corresponding fully discrete approximations are introduced. 
Section~\ref{sec:5} discusses some issues
concerning the practical implementation of the method,
in particular, the assembly of the bulk-interface cross terms. 
Finally, in Section~\ref{sec:6} several numerical
computations are presented. 

\setcounter{equation}{0}
\section{Mathematical setting} \label{sec:1}

Let $\Domain\subset\mathbb{R}^d$ be a given domain,  where $d=2$ or $d=3$. 
We now seek a time dependent interface $(\Gamma(t))_{t\in[0,T]}$,
$\Gamma(t)\subset\Domain$, 
which for all $t\in[0,T]$ separates
$\Domain$ into a domain $\Omega_+(t)$, occupied by one phase,
and a domain $\Omega_{-}(t):=\Domain\setminus\overline\Omega_+(t)$, 
which is occupied by the other phase. Here the phases could represent
two different liquids, or a liquid and a gas. Common examples are oil/water
or water/air interfaces.
See Figure~\ref{fig:sketch} for an illustration.
\begin{figure}
\begin{center}
\ifpdf
\includegraphics[angle=0,width=0.4\textwidth]{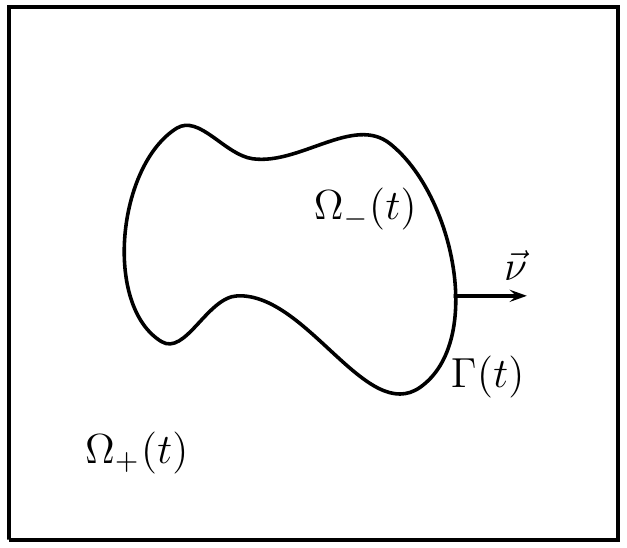}
\else
\unitlength15mm
\psset{unit=\unitlength,linewidth=1pt}
\begin{picture}(4,3.5)(0,0)
\psline[linestyle=solid]{-}(0,0)(4,0)(4,3.5)(0,3.5)(0,0)
\psccurve[showpoints=false,linestyle=solid] 
 (1,1.3)(1.5,1.6)(2.7,1.0)(2.5,2.6)(1.6,2.5)(1.1,2.7)
\psline[linestyle=solid]{->}(2.92,1.6)(3.4,1.6)
\put(3.25,1.7){{\black $\vec\nu$}}
\put(2.9,1.0){{$\Gamma(t)$}}
\put(2,2.1){{$\Omega_-(t)$}}
\put(0.5,0.5){{$\Omega_+(t)$}}
\end{picture}
\fi
\end{center}
\caption{The domain $\Omega$ in the case $d=2$.}
\label{fig:sketch}
\end{figure}%
For later use, we assume that
$(\Gamma(t))_{t\in [0,T]}$ 
is a sufficiently smooth evolving hypersurface without boundary that is
parameterized by $\vec x(\cdot,t):\Upsilon\to\R^d$,
where $\Upsilon\subset \R^d$ is a given reference manifold, i.e.\
$\Gamma(t) = \vec x(\Upsilon,t)$. Then
\begin{equation} \label{eq:V}
\vec{\mathcal{V}}(\vec z, t) := \vec x_t(\vec q, t)
\qquad \forall\ \vec z = \vec x(\vec q,t) \in \Gamma(t)
\end{equation}
defines the velocity of $\Gamma(t)$, and
$\vec{\mathcal{V}} \,.\,\vec{\nu}$ is
the normal velocity of the evolving hypersurface $\Gamma(t)$,
where $\vec\nu(t)$ is the unit normal on $\Gamma(t)$ pointing into 
$\Omega_+(t)$.
Moreover, we define the space-time surface
\begin{equation} \label{eq:GT}
\GT := \bigcup_{t \in [0,T]} \Gamma(t) \times \{t\}\,.
\end{equation}

Let $\rho(t) = \rho_+\,\charfcn{\Omega_+(t)} + \rho_-\,\charfcn{\Omega_-(t)}$,
with $\rho_\pm \in \Rplus$, denote the fluid densities, where here and
throughout $\charfcn{\mathcal{A}}$ defines the characteristic function for a
set $\mathcal{A}$.
Denoting by $\vec u : \Omega \times [0, T] \to \R^d$ the fluid velocity,
by $\mat\sigma : \Omega \times [0,T] \to \R^{d \times d}$ the stress tensor,
and by $\vec f : \Omega \times [0, T] \to \R^d$ a possible forcing,
the incompressible Navier--Stokes equations in the two phases are given by
\begin{subequations}
\begin{alignat}{2}
\rho\,(\vec u_t + (\vec u \,.\,\nabla)\,\vec u)
- \nabla\,.\,\mat\sigma & = \vec f := \rho\,\vec f_1 + \vec f_2
\qquad &&\mbox{in } 
\Omega_\pm(t)\,, \label{eq:NSa} \\
\nabla\,.\,\vec u & = 0 \qquad &&\mbox{in } \Omega_\pm(t)\,, \label{eq:NSb} \\
[\vec u]_-^+ & = \vec 0 \qquad &&\mbox{on } \Gamma(t)\,, \label{eq:1a} \\
\vec u & = \vec 0 \qquad &&\mbox{on } \partial_1\Omega\,, \label{eq:NSc} \\
\vec u \,.\,\unitn = 0\,,\quad
\mat\sigma\,\unitn \,.\,\unitt& = 0 
\quad\forall\ \unitt \in \{\unitn\}^\perp 
\qquad &&\mbox{on } \partial_2\Omega\,, 
\label{eq:NSd} 
\end{alignat}
\end{subequations}
where $\partial\Domain = \partial_1\Omega \cup
\partial_2\Omega$, with $\partial_1\Omega \cap \partial_2\Omega
=\emptyset$, denotes the boundary of $\Domain$ with outer unit normal $\unitn$
and $\{\unitn\}^\perp := \{ \unitt \in \R^d : \unitt \,.\,\unitn = 0\}$.
Hence (\ref{eq:NSc}) prescribes a no-slip condition on 
$\partial_1\Omega$, while (\ref{eq:NSd}) prescribes a free-slip condition on 
$\partial_2\Omega$. 
As usual, $[\vec u]_-^+ := \vec u_+ - \vec u_-$ 
denotes the jump in velocity across the interface
$\Gamma(t)$, where here and throughout we employ the shorthand notation
$\vec g_\pm := \vec g\!\mid_{\Omega_\pm(t)}$ for a function 
$\vec g : \Omega \times [0,T] \to \R^d$; and similarly for scalar and
matrix-valued functions.
In addition, the stress tensor in (\ref{eq:NSa}) is defined by
\begin{equation} \label{eq:sigma}
\mat\sigma = \mu \,(\nabla\,\vec u + (\nabla\,\vec u)^T) - p\,\mat\Id
= 2\,\mu\, \mat D(\vec u)-p\,\mat\Id\,,
\end{equation}
where $\mat\Id \in \R^{d \times d}$ denotes the identity matrix,
$\mat D(\vec u):=\frac12\, (\nabla\,\vec u+(\nabla\,\vec u)^T)$ is the
rate-of-deformation tensor, with $\nabla\,\vec u = \left( \partial_{x_j}\,u_i
\right)_{i,j=1}^d$. 
As usual, $\nabla\,.\, \mat{A} \in {\mathbb R}^d$ 
with $[\nabla \,.\,\mat{A}]_l = \nabla\,.\,\vec{A}_l$, $l=1\to d$,  
for $\mat{A}^T=[ \vec{A}_1 \ldots \vec{A}_d] \in \mathbb R^{d \times d}$.
Moreover,
$p : \Omega \times [0, T] \to \R$ is the pressure and
$\mu(t) = \mu_+\,\charfcn{\Omega_+(t)} + \mu_-\,\charfcn{\Omega_-(t)}$,
with $\mu_\pm \in \R_{>0}$, denotes the dynamic viscosities in the two 
phases.

Let $\rho_\Gamma(\cdot,t) : \Gamma(t) \to \Rgeq$ denote the surface material 
density.
Then on the free surface 
the following conditions need to hold:
\begin{subequations}
\begin{alignat}{2}
\matpartu\,\rho_\Gamma + (\nabs\,.\,\vec u)\,\rho_\Gamma & = 0 
\qquad &&\mbox{on } \Gamma(t)\,,\label{eq:1c}\\
\matpartu\,(\rho_\Gamma\,\vec u) + (\nabs\,.\,\vec u)\,\rho_\Gamma\,\vec u
- \nabs\,.\,\mat\sigma_\Gamma & =
[\mat\sigma\,\vec\nu]_-^+ 
\qquad &&\mbox{on } \Gamma(t)\,, \label{eq:1b} \\ 
\vec{\mathcal{V}}\,.\,\vec\nu &= 
\vec u\,.\,\vec \nu \qquad &&\mbox{on } \Gamma(t)\,, 
\label{eq:1d} 
\end{alignat}
\end{subequations}
see e.g.\ \cite{BotheP10} and \citet[p.\ 18--19]{GrossR11}.
Here $[\mat\sigma\,\vec\nu]_-^+ := \mat\sigma_+\,\vec\nu - \mat\sigma_-\,\vec\nu$
denotes the jump in normal stress across 
$\Gamma(t)$, $\nabs\,.\,$ denotes the surface divergence on $\Gamma(t)$, 
$\mat\sigma_\Gamma$ is the surface stress tensor and
\begin{equation} \label{eq:matpartu}
\matpartu\, \zeta = \zeta_t + (\vec u \,.\,\nabla)\,\zeta 
= \zeta_t + \vec u \,.\,\nabla\,\zeta 
\qquad \mbox{on } \Gamma(t)
\end{equation}
denotes the material time derivative of $\zeta : \GT \to \R$,
and similarly for $\vec\zeta : \GT \to \R^d$
, i.e.\ $\matpartu\, \vec\zeta = \vec\zeta_t + (\vec u \,.\,\nabla)\,\vec\zeta 
= \vec\zeta_t + (\nabla\,\vec\zeta)\,\vec u$. 
We set $H^1(\GT) := \{ \zeta \in L^2(\GT) : 
\nabs\,\zeta \in L^2(\GT), \matpartu\,\zeta \in L^2(\GT) \}$.
We stress
that the derivative in (\ref{eq:matpartu}) is well-defined, and depends only on
the values of $\zeta$ on $\GT$, even though $\zeta_t$ and
$\nabla\,\zeta$ do not make sense separately; see e.g.\
\citet[p.\ 324]{DziukE13}.
The surface stress tensor is defined by
\begin{equation} \label{eq:sigmaG}
\mat\sigma_\Gamma = 2\,\mu_\Gamma(\psi)\,\mat D_s (\vec u) 
+ ( \lambda_\Gamma(\psi) \,\nabs\,.\,\vec u + \gamma(\psi))
\,\mat{\mathcal{P}}_\Gamma \,,
\end{equation}
where $\mu_\Gamma \in C(\R, \Rgeq)$
is the interfacial shear viscosity, and
$\lambda_\Gamma \in C(\R)$ 
is the second interfacial viscosity coefficient satisfying
\begin{equation} \label{eq:mulambda}
\lambda_\Gamma(r) + \tfrac2{d-1}\, \mu_\Gamma(r) 
\geq 0 
\qquad \forall\ r \in \R\,.
\end{equation} 
In the special case that 
\begin{equation} \label{eq:muconst}
\mu_\Gamma(r) = \overline\mu_\Gamma \in \Rgeq
\quad\text{and}\quad
\lambda_\Gamma(r) = \overline\lambda_\Gamma \in \R
\qquad \forall\ r \in \R\,,
\end{equation}
the constants $\overline\lambda_\Gamma$ and $\overline\mu_\Gamma$ are also 
called the first and second surface Lam\'e constants, respectively.
In addition, $\gamma \in C^{1}([0,\psi_\infty))$, 
with $\psi_\infty > 0$ and
\begin{equation} \label{eq:gammaprime}
\gamma'(r) \leq 0 \qquad \forall\ r \in [0,\psi_\infty)\,,
\end{equation}
denotes the surface tension.
The interfacial viscosities and the surface tension depend on 
the surfactant concentration $\psi : \GT \to [0,\psi_\infty)$, 
recall (\ref{eq:GT}).
In addition,
\begin{subequations}
\begin{equation} \label{eq:Ps}
\mat{\mathcal{P}}_\Gamma = \mat\Id - \vec \nu \otimes \vec \nu
\end{equation}
is the tangential projection at $\Gamma(t)$, 
and
\begin{equation} \label{eq:Ds}
\mat D_s(\vec u) = \tfrac12\,\mat{\mathcal{P}}_\Gamma\,(\nabs\,\vec u + (\nabs\,\vec u)^T)\,
\mat{\mathcal{P}}_\Gamma
\end{equation}
\end{subequations}
is the interfacial rate-of-deformation tensor,
where $\nabs = \mat{\mathcal{P}}_\Gamma \,\nabla = (\partial_{s_1}, \ldots, \partial_{s_d})$ 
denotes the surface gradient on $\Gamma(t)$, and
$\nabs\,\vec u = \left( \partial_{s_j}\, u_i \right)_{i,j=1}^d$.

The surfactant transport (with diffusion) on
$\Gamma(t)$ is then given by
\begin{equation} \label{eq:1surf}
\matpartu\,\psi + (\nabs\,.\,\vec u)\,\psi - \nabs\,.\,(\Ds\,\nabs\,\psi) = 0
\qquad \mbox{on } \Gamma(t)\,,
\end{equation}
where $\Ds \geq 0$ is a diffusion coefficient. 
The system (\ref{eq:NSa}--e), (\ref{eq:sigma}), (\ref{eq:1c}--c),
(\ref{eq:sigmaG}), (\ref{eq:1surf}) is closed with the initial conditions
\begin{equation} \label{eq:1init}
\Gamma(0) = \Gamma_0 \,, \quad 
\rho_\Gamma(\cdot, 0) = \rho_{\Gamma,0} \quad \mbox{on } \Gamma_0\,,\quad
\psi(\cdot, 0) = \psi_0 \quad \mbox{on } \Gamma_0\,,\quad
\vec u(\cdot,0) = \vec u_0 \qquad \mbox{in } \Omega\,,
\end{equation}
where $\Gamma_0 \subset \Omega$, 
$\rho_{\Gamma,0} : \Gamma_0 \to \Rgeq$,
$\psi_0 : \Gamma_0 \to [0,\psi_\infty)$ and $\vec u_0 : \Omega \to \R^d$
are given initial data. 

With a view towards substituting (\ref{eq:sigmaG}) into (\ref{eq:1b}), we
observe that
\begin{align}
\nabs\,.\,\mat\sigma_\Gamma & =
2\,\mu_\Gamma(\psi)\,\nabs\,.\,\mat D_s (\vec u) + 
\nabs\,.\left[ ( \lambda_\Gamma(\psi) 
\,\nabs\,.\,\vec u + \gamma(\psi))\,\mat{\mathcal{P}}_\Gamma \right] \nonumber \\ &
= 2\,\mu_\Gamma(\psi)\,\nabs\,.\,\mat D_s (\vec u) 
+ \nabs\,.\left[
 \lambda_\Gamma(\psi) \,(\nabs\,.\,\vec u)\,\mat{\mathcal{P}}_\Gamma\right]
+ \gamma(\psi)\,\varkappa\,\vec\nu + \nabs\,\gamma(\psi) \,,
\label{eq:nabssigma}
\end{align}
where
$\nabs\,.\, \mat{A} \in {\mathbb R}^d$ 
with $[\nabs \,.\,\mat{A}]_l = \nabs\,.\,\vec{A}_l$, $l=1\to d$,  
for $\mat{A}^T=[ \vec{A}_1 \ldots \vec{A}_d] \in \mathbb R^{d \times d}$,
and where we have noted that
$\nabs\,.\,\vec \nu= \nabla\,.\,\vec \nu = - \varkappa$ implies that
\begin{equation*}
\nabs\,.\,\mat{\mathcal{P}}_\Gamma = \varkappa\, \vec\nu\,.
\end{equation*}
Here $\varkappa$ denotes the mean curvature of $\Gamma(t)$, i.e.\ the sum of
the principal curvatures of $\Gamma(t)$, where we have adopted the sign
convention that $\varkappa$ is negative where $\Omega_-(t)$ is locally convex.
In particular, it holds that
\begin{equation} \label{eq:LBop}
\Delta_s\, \vec\id = \varkappa\, \vec\nu =: \vec\varkappa
\qquad \mbox{on $\Gamma(t)$}\,,
\end{equation}
where $\Delta_s = \nabs\,.\,\nabs$ is the Laplace--Beltrami operator on 
$\Gamma(t)$. 

In the case that the interface is non-material, i.e.\ when
$\rho_\Gamma=0$, then the interface conditions (\ref{eq:1c}--c) simplify dramatically. 
In this case, on
recalling (\ref{eq:nabssigma}), we are left with the 
following conditions to hold on $\Gamma(t)$:
\begin{subequations}
\begin{align}
 [\mat\sigma\,\vec\nu]_-^+ & =
- 2\,\mu_\Gamma(\psi)\,\nabs\,.\,\mat D_s (\vec u) 
- \nabs\,.\left[
 \lambda_\Gamma(\psi) \,(\nabs\,.\,\vec u)\,\mat{\mathcal{P}}_\Gamma\right]
- \gamma(\psi)\,\varkappa\,\vec\nu - \nabs\,\gamma(\psi) 
\,, \label{eq:2b} \\ 
\vec{\mathcal{V}}\,.\,\vec\nu & = \vec u\,.\,\vec \nu 
\,. \label{eq:2c} 
\end{align}
\end{subequations}
If, in addition, $\lambda_\Gamma(\psi) = \mu_\Gamma(\psi) = 0$, 
then (\ref{eq:2b},b)
reduce to the interface conditions studied by the authors in \cite{tpfs}, where
a two-phase flow problem with insoluble surfactant is considered.

For later purposes, we introduce the 
surface energy function $F$, which satisfies
\begin{subequations}
\begin{equation} \label{eq:F}
\gamma(r) = F(r) - r\,F'(r) \qquad \forall\ r \in (0,\psi_\infty) \,,
\end{equation}
and
\begin{equation} \label{eq:F0}
\lim_{r\to0} r\,F'(r) = F(0) - \gamma(0) = 0\,.
\end{equation}
\end{subequations}
This means in particular that 
\begin{equation} \label{eq:Fdd}
\gamma'(r) = - r\,F''(r) \qquad \forall\ r \in (0,\psi_\infty)\,. 
\end{equation}
It immediately follows from (\ref{eq:Fdd}) and (\ref{eq:gammaprime}) that
$F \in C([0,\psi_\infty)) \cap C^2(0,\psi_\infty)$ is convex.
Typical examples for $\gamma$ and $F$ are given by
\begin{subequations}
\begin{equation} \label{eq:gamma1}
\gamma(r) = \overline\gamma \,( 1 - \beta\,r) \,,\quad
F(r) = \overline\gamma\left[1+\beta\,r\left(\ln r 
- 1 \right) \right],\ \psi_\infty = \infty\,,
\end{equation}
which represents a linear equation of state, and by
\begin{equation} \label{eq:gamma2}
\gamma(r) = \overline\gamma \left[ 1 + \beta\,\psi_\infty\,\ln \left( 
1 - \tfrac{r}{\psi_\infty} \right) \right] , \quad
F(r) = \overline\gamma\left[ 1 + 
\beta\left( r\,\ln \tfrac{r}{\psi_\infty-r} + \psi_\infty\,\ln
\tfrac{\psi_\infty-r}{\psi_\infty} \right) \right],
\end{equation}
\end{subequations}
the so-called Langmuir equation of state,
where $\overline\gamma \in \R_{>0}$ and $\beta \in \Rgeq$ are further given 
parameters, where we note that the special case $\beta = 0$ means that
(\ref{eq:gamma1},b) reduce to
\begin{equation} \label{eq:Fconst}
F(r) = \gamma(r) = \overline\gamma \in \R_{>0}\qquad \forall\ r \in \R\,.
\end{equation}
In the case (\ref{eq:Fconst}) the surface tension no longer depends on the
surfactant concentration $\psi$.

Before introducing our finite element approximation, 
we will state an appropriate weak formulation. With this in mind, 
we introduce the function spaces
\begin{align*}
\uspace & := \{ \vec\varphi \in [H^1(\Omega)]^d 
: \vec\varphi = \vec0 \ \mbox{ on } 
\partial_1\Omega\,,\ \vec\varphi \,.\,\unitn = 0 \ \mbox{ on } \partial_2\Omega
 \} \,,
\quad \pspace := L^2(\Omega)\,, \\ 
\widehat\pspace & := \{\eta \in \pspace : \int_\Omega\eta \dL{d}=0 \}\,,\quad
\utimespace :=  L^2(0,T; \uspace) \cap H^1(0,T;[L^2(\Omega)]^d)\,,\quad
\psispace := H^1(\GT)\,, \nonumber \\
\utimespace_\Gamma & := \{ \vec\varphi \in \utimespace : \vec\varphi \!\mid_\GT
\in [\psispace]^d \}\,.
\end{align*}
Let $(\cdot,\cdot)$ and $\langle \cdot, \cdot \rangle_{\Gamma(t)}$
denote the $L^2$--inner products on $\Omega$ and $\Gamma(t)$, respectively.
For later use we recall from \citet[Def.\ 2.11]{DziukE13} that
\begin{equation} \label{eq:DEdef2.11}
\left\langle \zeta, \nabs\,.\, \vec\eta \right\rangle_{\Gamma(t)}
+ \left\langle \nabs\,\zeta , \vec\eta \right\rangle_{\Gamma(t)}
= - \left\langle \zeta\,\vec\eta , \vec\varkappa \right\rangle_{\Gamma(t)} 
\qquad \forall\ \zeta \in H^1(\Gamma(t)),\, \vec \eta \in [H^1(\Gamma(t))]^d\,.
\end{equation}
We remark that it follows from (\ref{eq:DEdef2.11}) that
\begin{equation} \label{eq:newGD}
\left\langle \gamma(\psi)\,\vec\varkappa + \nabs\,\gamma(\psi), 
\vec \xi \right\rangle_{\Gamma(t)} 
=
\left\langle \gamma(\psi)\,\varkappa\,\vec \nu + \nabs\,\gamma(\psi), 
\vec \xi \right\rangle_{\Gamma(t)}
=
- \left\langle \gamma(\psi), \nabs\,.\,\vec \xi \right\rangle_{\Gamma(t)}
\quad \forall\ \vec\xi \in \uspace\,.
\end{equation}

We recall from \cite{fluidfbp} that it follows from (\ref{eq:NSb}--e) and
(\ref{eq:1d}) that
\begin{align}
( \rho\,(\vec u \,.\,\nabla)\,\vec u, \vec \xi )
&= \tfrac12 \left[ (\rho\,(\vec u\,.\,\nabla)\,\vec u, \vec \xi) -
(\rho\,(\vec u\,.\,\nabla)\,\vec \xi,\vec u)
-\langle [\rho]_-^+\,\vec u\,.\,\vec \nu, 
  \vec u\,.\,\vec \xi \rangle_{\Gamma(t)} \right]
\nonumber \\ 
& \hspace{3.2in} \qquad \forall\ \vec \xi \in [H^1(\Omega)]^d
\label{eq:advect}
\end{align}
and
\begin{align}
\ddt(\rho \,\vec u, \vec \xi) 
&= (\rho\,\vec u_t, \vec \xi)
+ ( \rho\,\vec u, \vec \xi_t)
- \left\langle [\rho]_-^+\,\vec u\,.\,\vec \nu, 
\vec u \,.\,\vec \xi \right\rangle_{\Gamma(t)}
\qquad \forall \ 
\vec \xi \in \utimespace\,,
\label{eq:rhot1}
\end{align}
respectively.
Therefore, it holds that
\begin{equation*} 
(\rho\,\vec u_t, \vec \xi) = 
\tfrac{1}{2} \left[
\ddt (\rho\,\vec u,\vec \xi) + (\rho\,\vec u_t, \vec \xi)
- ( \rho\,\vec u, \vec \xi_t)
+ \left\langle [\rho]_-^+\,\vec u\,.\,\vec \nu, 
\vec u \,.\,\vec \xi \right\rangle_{\Gamma(t)} 
\right]
\qquad \forall\ \vec \xi \in \utimespace\,,
\end{equation*}
which on combining with (\ref{eq:advect}) yields that
\begin{align} \label{eq:rhot3}
& (\rho\,[\vec u_t + (\vec u \,.\,\nabla)\,\vec u], \vec \xi)
\nonumber \\ & \quad
= \tfrac12\left[ \ddt (\rho\,\vec u, \vec \xi) + (\rho\,\vec u_t, \vec \xi)
- ( \rho\,\vec u, \vec \xi_t)
+ (\rho, [(\vec u\,.\,\nabla)\,\vec u]\,.\,\vec \xi
- [(\vec u\,.\,\nabla)\,\vec \xi]\,.\,\vec u) \right]
\qquad \forall\ \vec \xi \in \utimespace\,.
\end{align}
Moreover, it holds, 
on noting (\ref{eq:NSd}) and (\ref{eq:sigma}),
that for all $\vec \xi \in \uspace$ 
\begin{align}
& \int_{\Omega_+(t)\cup\Omega_-(t)} (\nabla\,.\,\mat\sigma)\,.\, \vec \xi 
\dL{d} 
= - 2\,(\mu\,\mat D(\vec u), \mat D(\vec \xi)) + (p, \nabla\,.\,\vec \xi)
- \left \langle [\mat\sigma\,\vec\nu]_-^+, \vec \xi \right \rangle_{\Gamma(t)}
\,,
\label{eq:sigmaibp}
\end{align}
where we have also noted for symmetric matrices $\mat A \in \R^{d \times d}$ that
$\mat A : \mat B = \mat A : \tfrac12\,(\mat B + \mat B^T)$ 
for all $\mat B \in \R^{d \times d}$.

Similarly to (\ref{eq:matpartu}) we define the following time derivative that
follows the parameterization $\vec x(\cdot, t)$ of $\Gamma(t)$, rather than
$\vec u$. In particular, we let
\begin{equation} \label{eq:matpartx}
\matpartx\, \zeta = \zeta_t + \vec{\mathcal{V}} \,.\,\nabla\,\zeta
\qquad \forall\ \zeta \in H^1(\GT)\,;
\end{equation}
where we stress once again that this definition is well-defined, even though
$\zeta_t$ and $\nabla\,\zeta$ do not make sense separately for a
function $\zeta \in H^1(\GT)$.
On recalling (\ref{eq:matpartu}) we obtain that
\begin{equation} \label{eq:matpartux}
\matpartx = \matpartu  \qquad\text{if}\qquad 
\vec{\mathcal{V}} = \vec u \quad \text {on } \Gamma(t)\,.
\end{equation}
We note that the definition (\ref{eq:matpartx}) 
differs from the definition of $\partial^\circ$ in 
\citet[p.\ 327]{DziukE13}, where
$\partial^\circ\,\zeta = \zeta_t + (\vec{\mathcal{V}}\,.\,\vec \nu)\, 
\vec\nu\,.\,\nabla\,\zeta$
for the ``normal time derivative''. 
It holds that
\begin{equation} \label{eq:DElem5.2}
\ddt \left\langle \chi, \zeta \right\rangle_{\Gamma(t)}
 = \left\langle \matpartx\,\chi, \zeta \right\rangle_{\Gamma(t)}
 + \left\langle \chi, \matpartx\,\zeta \right\rangle_{\Gamma(t)}
+ \left\langle \chi\,\zeta, \nabs\,.\,\vec{\mathcal{V}} 
\right\rangle_{\Gamma(t)}
\qquad \forall\ \chi,\zeta \in H^1(\GT)\,,
\end{equation}
see \citet[Lem.\ 5.2]{DziukE13}.

If $\vec {\cal V} = \vec u$ on $\Gamma(t)$, then 
it follows from (\ref{eq:1b}), (\ref{eq:nabssigma}),
(\ref{eq:matpartux}), (\ref{eq:DElem5.2}) and (\ref{eq:DEdef2.11}) that
\begin{align}
&\ddt\left\langle \rho_\Gamma\,\vec u, \vec\xi \right\rangle_{\Gamma(t)}
+ 2 \left\langle \mu_\Gamma(\psi)\,\mat D_s (\vec u) , \mat D_s ( \vec \xi )
\right\rangle_{\Gamma(t)}
+ \left\langle \lambda_\Gamma(\psi) \, 
 \nabs\,.\, \vec u , \nabs\,.\, \vec \xi \right\rangle_{\Gamma(t)}
\nonumber \\ & \hspace{1cm}
-\left\langle \gamma(\psi)\,\vec\varkappa + \nabs\,\gamma(\psi) , 
\vec \xi \right\rangle_{\Gamma(t)}
= 
\left\langle \rho_\Gamma\,\vec u, \matpartx\, \vec\xi 
\right\rangle_{\Gamma(t)}
+
\left \langle [\mat\sigma\,\vec\nu]_-^+, \vec \xi \right \rangle_{\Gamma(t)}
\qquad \forall\ \vec \xi \in \psispace\,,
\label{eq:weakGam}
\end{align}
where we have noted for symmetric matrices $\mat A \in \R^{d \times d}$
that $\mat{\mathcal{P}}_\Gamma\,\mat A\,\mat{\mathcal{P}}_\Gamma : \mat B = 
\mat{\mathcal{P}}_\Gamma\,\mat A \, \mat{\mathcal{P}}_\Gamma : \tfrac12\,\mat{\mathcal{P}}_\Gamma\,(\mat B + \mat B^T)\,
\mat{\mathcal{P}}_\Gamma$ for all $\mat B \in \R^{d \times d}$.

We are now in a position to state weak formulations of the 
Navier--Stokes two-phase flow problem for a Boussinesq--Scriven surface fluid
that we consider in this paper. 
The natural weak formulation of
the system (\ref{eq:NSa}--e), (\ref{eq:sigma}), (\ref{eq:1c}--c), 
(\ref{eq:sigmaG}) and (\ref{eq:1surf}) is given as follows.
Find $\Gamma(t) = \vec x(\Upsilon, t)$ for $t\in[0,T]$ 
with $\vec{\mathcal{V}} \in [L^2(\GT)]^d$, and functions 
$\rho_\Gamma \in \psispace$, 
$\vec u \in \utimespace_\Gamma$, 
$p \in L^2(0,T; \widehat\pspace)$, 
$\vec\varkappa \in [L^2(\GT)]^d$ and
$\psi \in \psispace$ 
such that for almost all $t \in (0,T)$ it holds that
\begin{subequations}
\begin{align}
& \ddt\left\langle \rho_\Gamma, \zeta \right\rangle_{\Gamma(t)}
= \left\langle \rho_\Gamma, \matpartx\, \zeta \right\rangle_{\Gamma(t)}
\quad\forall\ \zeta \in \psispace\,,
\label{eq:weakGDa0}\\
& \tfrac12\left[ \ddt (\rho\,\vec u, \vec \xi) + (\rho\,\vec u_t, \vec \xi)
- (\rho\,\vec u, \vec \xi_t)
+ (\rho, [(\vec u\,.\,\nabla)\,\vec u]\,.\,\vec \xi
- [(\vec u\,.\,\nabla)\,\vec \xi]\,.\,\vec u) \right] 
+ 2\,(\mu\,\mat D(\vec u), \mat D(\vec \xi)) 
\nonumber \\ & \quad
- (p, \nabla\,.\,\vec \xi)
+ \ddt\left\langle \rho_\Gamma\,\vec u, \vec\xi \right\rangle_{\Gamma(t)}
+ 2 \left\langle \mu_\Gamma(\psi)\, \mat D_s (\vec u) , \mat D_s ( \vec \xi )
\right\rangle_{\Gamma(t)}
\nonumber \\ & \quad
+  \left\langle\lambda_\Gamma(\psi) \, 
 \nabs\,.\, \vec u , \nabs\,.\, \vec \xi \right\rangle_{\Gamma(t)}
- \left\langle \gamma(\psi)\,\vec\varkappa + \nabs\,\gamma(\psi) , 
\vec \xi \right\rangle_{\Gamma(t)}
\nonumber \\ & \quad
 = (\vec f, \vec \xi) 
+ \left\langle \rho_\Gamma\,\vec u, \matpartx\, \vec\xi 
\right\rangle_{\Gamma(t)}
\qquad \forall\ \vec \xi \in \utimespace_\Gamma\,,
\label{eq:weakGDa} \\
& (\nabla\,.\,\vec u, \varphi) = 0  \qquad \forall\ \varphi \in 
\widehat\pspace\,, \label{eq:weakGDb} \\
&  \left\langle \vec{\mathcal{V}} - \vec u, \vec\chi \right\rangle_{\Gamma(t)} = 0
 \qquad\forall\ \vec\chi \in [L^2(\Gamma(t))]^d\,,\label{eq:weakGDc} \\
& \left\langle \vec\varkappa, \vec\eta \right\rangle_{\Gamma(t)}
+ \left\langle \nabs\,\vec \id, \nabs\,\vec \eta \right\rangle_{\Gamma(t)}
 = 0  \qquad\forall\ \vec\eta \in [H^1(\Gamma(t))]^d\,, \label{eq:weakGDd} \\
& \ddt\left\langle \psi, \zeta \right\rangle_{\Gamma(t)}
+ \Ds \left\langle \nabs\,\psi, \nabs\,\zeta \right\rangle_{\Gamma(t)}
= \left\langle \psi, \matpartx\, \zeta \right\rangle_{\Gamma(t)}
\quad\forall\ \zeta \in \psispace\,,
\label{eq:weakGDe}
\end{align}
\end{subequations}
as well as the initial conditions (\ref{eq:1init}), where in (\ref{eq:weakGDc}) 
we have recalled (\ref{eq:V}). 
Here (\ref{eq:weakGDa}) is derived from (\ref{eq:NSa}) and (\ref{eq:1b})
by combining (\ref{eq:rhot3}), 
(\ref{eq:sigmaibp}) and (\ref{eq:weakGam}), on noting (\ref{eq:weakGDc}).
The equations (\ref{eq:weakGDa0},f) 
are derived,
similarly
to (\ref{eq:weakGam}),
 from (\ref{eq:1c}) and (\ref{eq:1surf}), respectively, 
on noting (\ref{eq:DElem5.2}) and (\ref{eq:weakGDc}). 
Of course, it follows from
(\ref{eq:weakGDc}) and (\ref{eq:matpartux}) that $\matpartx$ in
(\ref{eq:weakGDa0},b,f) can be replaced by $\matpartu$.

In what follows we would like to derive an energy bound for a solution of
(\ref{eq:weakGDa0}--f). All of the following considerations are formal, in the
sense that we make the appropriate assumptions about the existence, 
boundedness and regularity of a solution to \mbox{(\ref{eq:weakGDa0}--f)}. 
In particular, we assume that $\psi \in [0,\psi_\infty)$.
Choosing $\vec\xi = \vec u$ in (\ref{eq:weakGDa}),
$\varphi = p(\cdot, t)$ in (\ref{eq:weakGDb}) and 
$\zeta = -\frac12\,|\vec u\mid_{{\cal G}_T}\!\!|^2$
in (\ref{eq:weakGDa0}), and combining 
yields 
that
\begin{align} \label{eq:d1}
& \tfrac12\,\ddt \,\left [\|\rho^\frac12\,\vec u \|_0^2 
+ \left \langle \rho_\Gamma \,\vec u, \vec u \right \rangle_{\Gamma(t)} \right]
+ 2\,\| \mu^\frac12\,\mat D(\vec u)\|_0^2 
+ 2\left\langle\mu_\Gamma(\psi)\, \mat D_s(\vec u), \mat D_s(\vec u) 
\right\rangle_{\Gamma(t)} 
\nonumber \\
& \hspace{1cm}
+  \left\langle\lambda_\Gamma(\psi) \,
 \nabs\,.\, \vec u , \nabs\,.\, \vec u \right\rangle_{\Gamma(t)} 
= (\vec f, \vec u) 
+ \left\langle \gamma(\psi)\,\vec\varkappa + \nabs\,\gamma(\psi) , \vec u 
\right\rangle_{\Gamma(t)}.
\end{align}
If $\gamma$ is constant, recall (\ref{eq:Fconst}), then 
the second term on the right hand side of (\ref{eq:d1})
collapses, on noting (\ref{eq:weakGDc},e) and (\ref{eq:DElem5.2}), to
\begin{align}
\overline\gamma \left\langle \vec \varkappa, \vec u \right \rangle_{\Gamma(t)}
&= \overline\gamma \left\langle \vec \varkappa, \vec {\cal V} \right \rangle_{\Gamma(t)}
= - \overline\gamma \left\langle \nabs \,\vec \id , \nabs \,\vec {\cal V} \right \rangle_{\Gamma(t)}
=- \overline\gamma \left\langle \mat\Id , \nabs \,\vec {\cal V} \right \rangle_{\Gamma(t)}
\nonumber \\
&=- \overline\gamma \left\langle 1, \nabs \,.\,\vec {\cal V} \right \rangle_{\Gamma(t)}
= - \overline\gamma\, \ddt\, {\cal H}^{d-1}(\Gamma(t))\,.
\label{eq:constgam}
\end{align}  
Combining (\ref{eq:d1}) and (\ref{eq:constgam})
yields the energy identity \citet[(3.2)]{BotheP10} if $\vec f = \vec 0$ in 
the absence of surfactant, i.e.\ if (\ref{eq:muconst}) and (\ref{eq:Fconst}) 
hold. Here we note that the authors in \cite{BotheP10} use a
slightly different notation and assume that 
$\overline\lambda_\Gamma \geq \overline\mu_\Gamma$, which is a stronger
assumption than (\ref{eq:mulambda}). In particular, we note that 
\begin{align}
& 2\left\langle\mu_\Gamma(\psi)\, \mat D_s(\vec \eta), \mat D_s(\vec \eta) 
\right\rangle_{\Gamma(t)} 
+  \left\langle\lambda_\Gamma(\psi) \,
 \nabs\,.\, \vec \eta , \nabs\,.\, \vec \eta \right\rangle_{\Gamma(t)} 
\nonumber \\ & \hspace{1cm}
= 2\left\langle\mu_\Gamma(\psi)\, \mat {\hat D}_s(\vec \eta), 
\mat {\hat D}_s(\vec \eta) \right\rangle_{\Gamma(t)} 
+  \left\langle (\lambda_\Gamma(\psi) + \tfrac2{d-1}\,\mu_\Gamma(\psi))\,
 \nabs\,.\, \vec \eta , \nabs\,.\, \vec \eta \right\rangle_{\Gamma(t)} 
\nonumber \\ & \hspace{10cm}
\qquad \forall\ \vec\eta \in [H^1(\Gamma(t))]^d \,,
\label{eq:HG}
\end{align}
where
\begin{equation} \label{eq:hatDs}
\mat {\hat D}_s(\vec \eta) =
\mat D_s(\vec \eta) - \tfrac1{d-1}\left(\tr{\mat D_s(\vec\eta)}\right)
\mat{\mathcal{P}}_\Gamma
= \mat D_s(\vec \eta) - \tfrac1{d-1}\left( \nabs\,.\,\vec \eta \right)
\mat{\mathcal{P}}_\Gamma
\end{equation}
denotes the deviatoric part of $\mat D_s(\vec \eta)$.
Hence (\ref{eq:d1}) can be reformulated as
\begin{align} \label{eq:d11}
& \tfrac12\,\ddt \,\left [\|\rho^\frac12\,\vec u \|_0^2 
+ \left \langle \rho_\Gamma \,\vec u, \vec u \right \rangle_{\Gamma(t)} \right]
+ 2\,\| \mu^\frac12\,\mat D(\vec u)\|_0^2 
+ 2\left\langle\mu_\Gamma(\psi)\, \mat {\hat D}_s(\vec u), 
\mat {\hat D}_s(\vec u) \right\rangle_{\Gamma(t)} 
\nonumber \\
& \hspace{5mm}
+  \left\langle(\lambda_\Gamma(\psi) + \tfrac2{d-1}\,\mu_\Gamma(\psi)) \,
 \nabs\,.\, \vec u , \nabs\,.\, \vec u \right\rangle_{\Gamma(t)} 
= (\vec f, \vec u) 
+ \left\langle \gamma(\psi)\,\vec\varkappa + \nabs\,\gamma(\psi) , \vec u 
\right\rangle_{\Gamma(t)}.
\end{align}
In order to formally derive an energy bound for the solution of
(\ref{eq:weakGDa0}--f), we need to control the last term on the right hand side
of (\ref{eq:d11}). This can be achieved as in \cite{tpfs}, and we
repeat these formal considerations here for the benefit of the reader.
On assuming that $\gamma$ is not constant, recall (\ref{eq:Fconst}), 
we would like to choose $\zeta = F'(\psi)$ in 
(\ref{eq:weakGDe}). As $F'$ in general is singular at the origin, 
recall (\ref{eq:Fdd}), we instead
choose $\zeta = F'(\psi + \alpha)$ for some $\alpha \in \R_{>0}$ with
$\psi + \alpha < \psi_\infty$.
Then we obtain, on recalling (\ref{eq:F}) and (\ref{eq:DElem5.2}), that
\begin{align} \label{eq:d4}
& \ddt \left\langle F(\psi + \alpha) - \gamma(\psi + \alpha), 1 
\right\rangle_{\Gamma(t)}
+ \Ds \left\langle \nabs\,(\psi + \alpha), 
\nabs\,F'(\psi + \alpha) \right\rangle_{\Gamma(t)} \nonumber \\ &
\qquad\qquad
= \left\langle \psi + \alpha, \matpartx\,F'(\psi + \alpha) \right\rangle_{\Gamma(t)}
+ \alpha  \left\langle F'(\psi + \alpha) , \nabs\,.\,\vec{\mathcal{V}} \right\rangle_{\Gamma(t)}
\,.
\end{align}
Moreover, choosing $\chi = \gamma(\psi+\alpha)$, $\zeta = 1$ in
(\ref{eq:DElem5.2}), 
and then choosing $\vec\eta = \vec{\mathcal{V}}$, 
$\zeta = \gamma(\psi+\alpha)$ in
(\ref{eq:DEdef2.11}) gives that
\begin{align} \label{eq:d2}
\ddt \left\langle \gamma(\psi+\alpha), 1 \right\rangle_{\Gamma(t)}
& = \left\langle \matpartx\,\gamma(\psi+\alpha), 1 \right\rangle_{\Gamma(t)}
+ \left\langle \gamma(\psi+\alpha), \nabs\,.\,\vec{\mathcal{V}} \right\rangle_{\Gamma(t)} 
\nonumber \\ &
= \left\langle \matpartx\,\gamma(\psi+\alpha), 1 \right\rangle_{\Gamma(t)}
- \left\langle \gamma(\psi+\alpha) \,\vec\varkappa + 
 \nabs\,\gamma (\psi+\alpha) , \vec{\mathcal{V}} \right\rangle_{\Gamma(t)}.
\end{align}
In addition, it follows from (\ref{eq:Fdd}) that
\begin{equation} \label{eq:d3}
\matpartx\,\gamma(\psi+\alpha) = \gamma'(\psi+\alpha) \, \matpartx\,\psi
= - (\psi+\alpha)\,F''(\psi+\alpha)\,\matpartx\,\psi 
= - (\psi+\alpha)\,\matpartx\,F'(\psi+\alpha)\,.
\end{equation}
Combining (\ref{eq:d4}), (\ref{eq:d2}) and (\ref{eq:d3}) yields that
\begin{align} \label{eq:d123}
& \ddt \left\langle F(\psi + \alpha) , 1 \right\rangle_{\Gamma(t)}
+ \Ds \left\langle \nabs\,\mathcal{F}(\psi + \alpha), 
\nabs\,\mathcal{F}(\psi + \alpha) \right\rangle_{\Gamma(t)} \nonumber \\ &
\qquad\qquad
= - \left\langle \gamma(\psi+\alpha) \,\vec\varkappa + 
 \nabs\,\gamma (\psi+\alpha) , \vec{\mathcal{V}} \right\rangle_{\Gamma(t)}
+ \alpha  \left\langle F'(\psi + \alpha) , \nabs\,.\,\vec{\mathcal{V}} \right\rangle_{\Gamma(t)}
\,,
\end{align}
where, on recalling (\ref{eq:Fdd}) and (\ref{eq:gammaprime}),
\begin{equation*}  
\mathcal{F}(r) = \int_0^r [F''(y)]^{\frac12} \;{\rm d}y\,.
\end{equation*}
Letting $\alpha \to 0$ in (\ref{eq:d123}) yields, on recalling (\ref{eq:F0}),
that
\begin{equation}  \label{eq:d45}
 \ddt \left\langle F(\psi) , 1 \right\rangle_{\Gamma(t)}
+ \Ds \left\langle \nabs\,\mathcal{F}(\psi), 
\nabs\,\mathcal{F}(\psi) \right\rangle_{\Gamma(t)} 
= - \left\langle \gamma(\psi) \,\vec\varkappa + 
 \nabs\,\gamma (\psi) , \vec{\mathcal{V}} \right\rangle_{\Gamma(t)} .
\end{equation}
We note that (\ref{eq:d45}) is still valid,
 on recalling 
(\ref{eq:constgam}),
in the case (\ref{eq:Fconst}).
Combining (\ref{eq:d45}) with (\ref{eq:d11}) implies the a priori energy 
equation
\begin{align} \label{eq:d5}
& \ddt \left( \tfrac12\left[\|\rho^\frac12\,\vec u \|_0^2 
+ \left \langle \rho_\Gamma \,\vec u, \vec u \right \rangle_{\Gamma(t)} \right]
+ \left\langle F(\psi), 1 \right\rangle_{\Gamma(t)} \right)
+ 2\,\| \mu^\frac12\,\mat D(\vec u)\|_0^2 
\nonumber \\ & \qquad
+ 2\left\langle \mu_\Gamma(\psi)\, \mat {\hat D}_s(\vec u), 
\mat {\hat D}_s(\vec u)\right\rangle_{\Gamma(t)}
+  \left\langle(\lambda_\Gamma(\psi)  + \tfrac2{d-1}\,\mu_\Gamma(\psi))\,
 \nabs\,.\, \vec u , \nabs\,.\, \vec u \right\rangle_{\Gamma(t)}
\nonumber \\ & \qquad
+ \Ds \left\langle \nabs\,\mathcal{F}(\psi), \nabs\,\mathcal{F}(\psi) 
\right\rangle_{\Gamma(t)}
= (\vec f, \vec u )\,,
\end{align}
where we recall the assumption (\ref{eq:mulambda}). 

Apart from the energy law (\ref{eq:d5}), certain conservation properties can
also be shown for a solution of (\ref{eq:weakGDa0}--f). 
For example, 
the volume
of $\Omega_-(t)$ is preserved in time, i.e.\ the mass of each phase is 
conserved. To see this, choose $\vec\chi = \vec\nu$ in
(\ref{eq:weakGDc}) and $\varphi = \charfcn{\Omega_-(t)}$ in (\ref{eq:weakGDb})
to obtain
\begin{equation}
\frac{\rm d}{{\rm d}t} \vol(\Omega_-(t)) = 
\left\langle \vec{\mathcal{V}}, \vec\nu \right\rangle_{\Gamma(t)}
= \left\langle \vec u, \vec\nu \right\rangle_{\Gamma(t)}
= \int_{\Omega_-(t)} \nabla\,.\,\vec u \dL{d} 
=0\,. \label{eq:conserved}
\end{equation}
In addition, we note that it immediately follows from 
choosing $\zeta = 1$ in (\ref{eq:weakGDa0},f) that the 
total surface mass and the total amount of surfactant are preserved, i.e.\
\begin{equation} \label{eq:totalpsi}
\ddt \int_{\Gamma(t)} \rho_\Gamma \dH{d-1}= 0 \qquad \mbox{and}\qquad
\ddt \int_{\Gamma(t)} \psi \dH{d-1} = 0 \,.
\end{equation}

We note that, in contrast to (\ref{eq:matpartux}), 
if we relax $\vec{\mathcal{V}} = \vec u \!\mid_{\Gamma(t)}$ to
\begin{equation*} 
\vec{\mathcal{V}} \,.\,\vec \nu = \vec u\,.\,\vec \nu
\quad \text{on } \Gamma(t)\,,
\end{equation*}
then it holds that
\begin{equation} \label{eq:matpartxu}
\matpartx\, \zeta = \matpartu\,\zeta 
+ ((\vec{\mathcal{V}} - \vec u)\,.\,\nabla)\,\zeta 
= \matpartu\,\zeta 
+ ((\vec{\mathcal{V}} - \vec u)\,.\,\nabs)\,\zeta 
\qquad \forall\ \zeta \in H^1(\GT)\,,
\end{equation}
and similarly for $\vec\zeta \in [H^1(\GT)]^d$.

Our preferred finite element approximation will be based on the following weak
formulation.
Find $\Gamma(t) = \vec x(\Upsilon, t)$ for $t\in[0,T]$ 
with $\vec{\mathcal{V}} \in [L^2(\GT)]^d$, and functions 
$\rho_\Gamma \in \psispace$, 
$\vec u \in \utimespace_\Gamma$, 
$p \in L^2(0,T; \widehat\pspace)$, 
$\varkappa \in L^2(\GT)$ and $\psi \in \psispace$ 
such that for almost all $t \in (0,T)$ it holds that
\begin{subequations}
\begin{align}
& \ddt\left\langle \rho_\Gamma, \zeta \right\rangle_{\Gamma(t)}
+ \left\langle \rho_\Gamma,(\vec {\mathcal{V}} - \vec u)\,.\, 
 \nabs\,\zeta\right\rangle_{\Gamma(t)}
= \left\langle \rho_\Gamma, \matpartx\, \zeta \right\rangle_{\Gamma(t)}
\quad\forall\ \zeta \in \psispace\,,
\label{eq:weaka0}\\
& \tfrac12\left[ \ddt (\rho\,\vec u, \vec \xi) + (\rho\,\vec u_t, \vec \xi)
- (\rho\,\vec u, \vec \xi_t)
+ (\rho, [(\vec u\,.\,\nabla)\,\vec u]\,.\,\vec \xi
- [(\vec u\,.\,\nabla)\,\vec \xi]\,.\,\vec u) \right] 
+ 2\,(\mu\,\mat D(\vec u), \mat D(\vec \xi)) 
\nonumber \\ & \quad
- (p, \nabla\,.\,\vec \xi)
+ \ddt\left\langle \rho_\Gamma\,\vec u, \vec\xi \right\rangle_{\Gamma(t)}
+ 2 \left\langle \mu_\Gamma(\psi)\, \mat D_s (\vec u) , \mat D_s ( \vec \xi )
\right\rangle_{\Gamma(t)}
\nonumber \\ & \quad
+ \left\langle\lambda_\Gamma(\psi) \,
 \nabs\,.\, \vec u , \nabs\,.\, \vec \xi \right\rangle_{\Gamma(t)}
- \left\langle \gamma(\psi)\,\varkappa\,\vec\nu + \nabs\,\gamma(\psi) 
,\vec \xi \right\rangle_{\Gamma(t)}
\nonumber \\ & \quad
+ \left\langle \rho_\Gamma\,\vec u, [(\vec {\mathcal{V}} - \vec u)\,.\,\nabs]\,\vec\xi
 \right\rangle_{\Gamma(t)}
 = (\vec f, \vec \xi) 
+ \left\langle \rho_\Gamma\,\vec u, \matpartx\, \vec\xi 
\right\rangle_{\Gamma(t)}
\qquad \forall\ \vec \xi \in \utimespace_\Gamma\,,
\label{eq:weaka} \\
& (\nabla\,.\,\vec u, \varphi) = 0  \qquad \forall\ \varphi \in 
\widehat\pspace\,, \label{eq:weakb} \\
&  \left\langle \vec{\mathcal{V}} - \vec u, \chi\,\vec\nu \right\rangle_{\Gamma(t)} = 0
 \qquad\forall\ \chi \in L^2(\Gamma(t))\,,
\label{eq:weakc} \\
& \left\langle \varkappa\,\vec\nu, \vec\eta \right\rangle_{\Gamma(t)}
+ \left\langle \nabs\,\vec\id, \nabs\,\vec \eta \right\rangle_{\Gamma(t)}
 = 0  \qquad\forall\ \vec\eta \in [H^1(\Gamma(t))]^d\,, \label{eq:weakd}\\
& \ddt\left\langle \psi, \zeta \right\rangle_{\Gamma(t)}
+ \Ds \left\langle \nabs\,\psi, \nabs\,\zeta \right\rangle_{\Gamma(t)}
+ \left\langle \psi,(\vec {\mathcal{V}} - \vec u)\,.\, \nabs\,\zeta\right\rangle_{\Gamma(t)}
= \left\langle \psi, \matpartx\, \zeta \right\rangle_{\Gamma(t)}
\qquad\forall\ \zeta \in \psispace\,,
\label{eq:weake}
\end{align}
\end{subequations}
as well as the initial conditions (\ref{eq:1init}), where in (\ref{eq:weaka0},b,d,f)
we have recalled (\ref{eq:V}). 

Similarly to (\ref{eq:d1}), choosing $\vec\xi = \vec u$ in (\ref{eq:weaka}), 
$\varphi = p(\cdot, t)$ in (\ref{eq:weakb})
and $\zeta = -\frac12\,|\vec u \mid_{{\cal G}_T}\!\!|^2$
in (\ref{eq:weaka0}) yields, on noting $\vec\varkappa=\varkappa\,\vec\nu$
and
\begin{equation} \label{eq:nbgn}
\tfrac12 \left\langle \rho_\Gamma,(\vec {\mathcal{V}} - \vec u)\,.\, 
 \nabs\,|\vec u|^2\right\rangle_{\Gamma(t)}
= \left\langle \rho_\Gamma\,\vec u, [(\vec {\mathcal{V}} - \vec u)\,.\,\nabs]\,
 \vec u \right\rangle_{\Gamma(t)} ,
\end{equation}
that the formal equation (\ref{eq:d1}) holds for a solution of the
weak formulation (\ref{eq:weaka0}--f). 
Moreover, 
similarly to (\ref{eq:d1})--(\ref{eq:d5}), we can formally show that a solution
to (\ref{eq:weaka0}--f) satisfies the a priori energy bound (\ref{eq:d5}). 
We observe that the
analogue of (\ref{eq:d45}) has as right hand side
\begin{align} \label{eq:dbgn}
& - \left\langle \gamma(\psi)\,\vec\varkappa + \nabs\,\gamma(\psi), 
  \vec{\mathcal{V}} \right\rangle_{\Gamma(t)}
- \left\langle \psi\,(\vec {\mathcal{V}} - \vec u), \nabs\,F'(\psi)
\right\rangle_{\Gamma(t)} \nonumber \\ & \qquad
= - \left\langle \gamma(\psi)\,\varkappa\,\vec\nu+ \nabs\,\gamma(\psi), 
  \vec{\mathcal{V}} \right\rangle_{\Gamma(t)}
+ \left\langle \nabs\,\gamma(\psi), \vec {\mathcal{V}} - \vec u
\right\rangle_{\Gamma(t)} \nonumber \\ & \qquad
= -\left\langle \gamma(\psi)\,\varkappa\,\vec\nu+ \nabs\,\gamma(\psi), 
  \vec u \right\rangle_{\Gamma(t)},
\end{align}
where we have used (\ref{eq:weakc}) with $\chi = \gamma(\psi)\,\varkappa$ and
(\ref{eq:Fdd}). Of course, (\ref{eq:dbgn}) now cancels with the last term in
(\ref{eq:d1}), and so we obtain (\ref{eq:d5}). Moreover, the properties
(\ref{eq:conserved}) and (\ref{eq:totalpsi}) also hold
for a solution to (\ref{eq:weaka0}--f).

\setcounter{equation}{0}
\section{Semi-discrete finite element approximation} \label{sec:2}

For simplicity we consider $\Omega$ to be a polyhedral domain. Then 
let  ${\cal T}^h$ 
be a regular partitioning of $\Omega$ into disjoint open simplices
$\sigmaO^h_j$, $j = 1 ,\ldots, J_\Omega^h$.
Associated with ${\cal T}^h$ are the finite element spaces
\begin{equation*} 
 S^h_k := \{\chi \in C(\overline{\Omega}) : \chi\!\mid_{\sigmaO} \in
\mathcal{P}_k(\sigmaO) \quad \forall\ \sigmaO \in {\cal T}^h\} 
\subset H^1(\Omega)\,, \qquad k \in \mathbb{N}\,,
\end{equation*}
where $\mathcal{P}_k(\sigmaO)$ denotes the space of polynomials of degree $k$
on $\sigmaO$. We also introduce $S^h_0$, the space of  
piecewise constant functions on ${\cal T}^h$.
Let $\{\varphi_{k,j}^h\}_{j=1}^{K_k^h}$ be the standard basis functions 
for $S^h_k$, $k\geq 0$.
We introduce $\vec I^h_k:[C(\overline{\Omega})]^d \to [S^h_k]^d$, $k\geq 1$, 
the standard interpolation
operators, such that $(\vec I^h_k\, \vec\eta)(\vec{p}_{k,j}^h)= 
\vec\eta(\vec{p}_{k,j}^h)$ for $j=1,\ldots, K_k^h$; where
$\{\vec p_{k,j}^h\}_{j=1}^{K_k^h}$ denotes the coordinates of the degrees of
freedom of $S^h_k$, $k\geq 1$. In addition we define the standard projection
operator $I^h_0:L^1(\Omega)\to S^h_0$, such that
\begin{equation*}
(I^h_0 \eta)\!\mid_{o} = \frac1{\mathcal{L}^d(o)}\,\int_{o}
\eta \dL{d} \qquad \forall\ o \in \mathcal{T}^h\,.
\end{equation*}
Our approximation to the velocity and pressure on ${\cal T}^h$
will be finite element spaces
$\uspace^h\subset\uspace$ and $\pspace^h(t)\subset\pspace$.
We require also the spaces $\widehat\pspace^h(t):= 
\pspace^h(t) \cap \widehat\pspace$. 
Based on the authors' earlier work in \cite{spurious,fluidfbp}, we will select 
velocity/pressure finite element spaces 
that satisfy the LBB inf-sup condition,
see e.g.\ \citet[p.~114]{GiraultR86}, and augment the pressure space by a 
single additional basis function, namely by the characteristic function of the
inner phase. 
For the obtained spaces $(\uspace^h,\pspace^h(t))$ we are unable to prove that
they satisfy an LBB condition.
The extension of the given pressure finite element space, which is an example
of an XFEM approach, leads to exact volume conservation of the two
phases within the finite element framework.
For the non-augmented spaces we may choose, for example, 
the lowest order Taylor-Hood element 
P2--P1, the P2--P0 element or the P2--(P1+P0) element on setting 
$\uspace^h=[S^h_2]^d\cap\uspace$, 
and $\pspace^h = S^h_1,\,S^h_0$ or $S^h_1+S^h_0$, respectively.
We refer to \cite{spurious,fluidfbp} for more details.

The parametric finite element spaces in order to approximate $\vec x$,
$\vec\varkappa$ in (\ref{eq:weakGDa0}--f) and $\vec x$, $\varkappa$ in 
(\ref{eq:weaka0}--f), respectively, are defined as follows;
see also \cite{Dziuk91,gflows3d}.
Let $\Gamma^h(t)\subset\R^d$ be a $(d-1)$-dimensional {\em polyhedral surface}, 
i.e.\ a union of non-degenerate $(d-1)$-simplices with no
hanging vertices (see \citet[p.~164]{DeckelnickDE05} for $d=3$),
approximating the closed surface $\Gamma(t)$. In
particular, let $\Gamma^h(t)=\bigcup_{j=1}^{J_\Gamma}
\overline{\sigma^h_j(t)}$, where $\{\sigma^h_j(t)\}_{j=1}^{J_\Gamma}$ is a
family of mutually disjoint open $(d-1)$-simplices with vertices
$\{\vec{q}^h_k(t)\}_{k=1}^{K_\Gamma}$.
Then let
\begin{align*} 
\Vht & := \{\vec\chi \in [C(\Gamma^h(t))]^d:\vec\chi\!\mid_{\sigma^h_j}
\mbox{ is linear}\ \forall\ j=1\to J_\Gamma\} \\ & 
=: [\Wht]^d \subset [H^1(\Gamma^h(t))]^d\,,
\end{align*}
where $\Wht \subset H^1(\Gamma^h(t))$ is the space of scalar continuous
piecewise linear functions on $\Gamma^h(t)$, with 
$\{\chi^h_k(\cdot,t)\}_{k=1}^{K_\Gamma}$ 
denoting the standard basis of $\Wht$, i.e.\
\begin{equation} \label{eq:bf}
\chi^h_k(\vec q^h_l(t),t) = \delta_{kl}\qquad
\forall\ k,l \in \{1,\ldots,K_\Gamma\}\,, t \in [0,T]\,.
\end{equation}
For later purposes, we also introduce 
$\pi^h(t): C(\Gamma^h(t))\to \Wht$, the standard interpolation operator
at the nodes $\{\vec{q}_k^h(t)\}_{k=1}^{K_\Gamma}$, and similarly
$\vec\pi^h(t): [C(\Gamma^h(t))]^d\to \Vht$.

On choosing an arbitrary fixed $t_0 \in (0,T)$, we can represent each 
$\vec z \in \Gamma^h(t_0)$ as
\begin{equation} \label{eq:HG0}
\vec z = \sum_{k=1}^{K_\Gamma}
\chi^h_k(\vec z, t_0)\,\vec q^h_k(t_0) \,.
\end{equation}
Now we can parameterize $\Gamma^h(t)$ by
$\vec X^h(\cdot,t) : \Gamma^h(t_0) \to \R^d$, where
$\vec z \mapsto \sum_{k=1}^{K_\Gamma} \chi^h_k(\vec z, t_0)\,\vec q^h_k(t)$,
i.e.\ $\Gamma^h(t_0)$ plays the role of a reference manifold for
$(\Gamma^h(t))_{t\in[0,T]}$.
Then, 
similarly to (\ref{eq:V}), we define the discrete velocity
for $\vec z \in \Gamma^h(t_0)$ by
\begin{equation} \label{eq:Xht}
\vec{\mathcal{V}}^h(\vec z, t_0) := 
\ddt\, \vec X^h (\vec z, t_0) = \sum_{k=1}^{K_\Gamma}
\chi^h_k(\vec z, t_0) \, \ddt\,\vec q^h_k(t_0) \, ,
\end{equation}
which corresponds to \citet[(5.23)]{DziukE13}.
In addition, 
similarly to (\ref{eq:matpartx}), we define
\begin{equation} \label{eq:matpartxh}
\matpartxh\, \zeta (\vec z, t_0) = \ddt\,\zeta(\vec X^h(\vec z, t_0), t_0) =
\zeta_t(\vec z, t_0) + \vec{\mathcal{V}}^h(\vec z, t_0)\,.\,\nabla\,\zeta(\vec z, t_0)
\qquad\forall\ \zeta \in H^1(\GhT)\,,
\end{equation}
where, similarly to (\ref{eq:GT}), we have defined the discrete
space-time surface
\begin{equation} \label{eq:GhT}
\GhT := \bigcup_{t \in [0,T]} \Gamma^h(t) \times \{t\}\,.
\end{equation}
It immediately follows from (\ref{eq:matpartxh}) that
$\matpartxh\,\vec\id = \vec{\mathcal{V}}^h$ on $\Gamma^h(t)$.
For later use, we also introduce the finite element spaces
\begin{align*}
W(\GhT) & := \{ \chi \in C(\GhT) : 
\chi(\cdot, t) \in \Wht \quad \forall\ t \in [0,T] \}\,, \\
W_T(\GhT) & := \{ \chi \in W(\GhT) : \matpartxh\,\chi \in C(\GhT) \}\,.
\end{align*}

On differentiating (\ref{eq:bf}) with respect to $t$, we obtain
that
\begin{equation} \label{eq:mpbf}
\matpartxh\, \chi^h_k = 0
\quad\forall\ k \in \{1,\ldots,K_\Gamma\}\,,
\end{equation}
see also \citet[Lem.\ 5.5]{DziukE13}. 
It follows directly from (\ref{eq:mpbf}) that
\begin{equation} \label{eq:p1}
\matpartxh\,\zeta(\cdot,t) = \sum_{k=1}^{K_\Gamma} \chi^h_k(\cdot,t)\,
\ddt\,\zeta_k(t) \quad \text{on}\ \Gamma^h(t)
\end{equation}
for $\zeta(\cdot,t) = \sum_{k=1}^{K_\Gamma} \zeta_k(t)\,\chi^h_k(\cdot,t)
\in \Wht$.
Moreover, it holds that
\begin{equation} \label{eq:DEeq5.28}
\ddt\, \int_{\sigma^h_j(t)} \zeta \dH{d-1} 
= \int_{\sigma^h_j(t)} \matpartxh\,\zeta + \zeta\,\nabs\,.\,\vec{\mathcal{V}}^h 
\dH{d-1} \quad \forall\ \zeta \in H^1(\sigma^h(t))\,, j \in
\{1,\ldots,J_\Gamma\}\,,
\end{equation}
see \citet[Lem.\ 5.6]{DziukE13}. 
It immediately follows from (\ref{eq:DEeq5.28}) that
\begin{equation} \label{eq:DElem5.6}
\ddt \langle \eta, \zeta \rangle_{\Gamma^h(t)}
 = \langle \matpartxh\,\eta, \zeta \rangle_{\Gamma^h(t)}
 + \langle \eta, \matpartxh\,\zeta \rangle_{\Gamma^h(t)}
+ \langle \eta\,\zeta, \nabs\,.\,\vec{\mathcal{V}}^h \rangle_{\Gamma^h(t)}
\qquad \forall\ \eta,\zeta \in W_T(\GhT)\,,
\end{equation}
which is a discrete analogue of (\ref{eq:DElem5.2}). Here
$\langle\cdot,\cdot\rangle_{\Gamma^h(t)}$ denotes the $L^2$--inner product
on $\Gamma^h(t)$. It is not difficult to show that the
analogue of (\ref{eq:DElem5.6}) with numerical integration also holds. 
We state
this result in the next lemma, together with a discrete variant of
(\ref{eq:DEdef2.11}), on recalling (\ref{eq:LBop}), for the case $d=2$. Let the
mass lumped inner product $\langle\cdot,\cdot\rangle_{\Gamma^h(t)}^h$ 
on $\Gamma^h(t)$, for piecewise continuous functions with possible jumps
across the edges of $\{\sigma_j^h\}_{j=1}^{J_\Gamma}$, be defined by
\begin{equation} \label{eq:defNI}
\left\langle \eta, \zeta \right\rangle^h_{\Gamma^h(t)}  :=
\tfrac1d \sum_{j=1}^{J_\Gamma} \mathcal{H}^{d-1}(\sigma^h_j)\,\sum_{k=1}^{d} 
(\eta\,\zeta)((\vec{q}^h_{j_k})^-),
\end{equation}
where $\{\vec{q}^h_{j_k}\}_{k=1}^{d}$ are the vertices of $\sigma^h_j$,
and where
we define $(\eta\,\zeta)((\vec{q}^h_{j_k})^-):=
\underset{\sigma^h_j\ni \vec{p}\to \vec{q}^h_{j_k}}{\lim}\, 
(\eta\,\zeta)(\vec{p})$.

\begin{lem} \label{lem:DElem5.6NI}
It holds that
\begin{equation} \label{eq:DElem5.6NI}
\ddt \langle \eta, \zeta \rangle_{\Gamma^h(t)}^h
 = \langle \matpartxh\,\eta, \zeta \rangle_{\Gamma^h(t)}^h
 + \langle \eta, \matpartxh\,\zeta \rangle_{\Gamma^h(t)}^h
+ \langle \eta\,\zeta, \nabs\,.\,\vec{\mathcal{V}}^h \rangle_{\Gamma^h(t)}^h
\quad \forall\ \eta,\zeta \in W_T(\GhT)\,.
\end{equation}
In addition, if $d=2$, it holds that
\begin{equation} \label{eq:JWB}
\langle \zeta, \nabs\,.\,\vec\eta \rangle_{\Gamma^h(t)}
+ \langle \nabs\,\zeta, \vec\eta \rangle_{\Gamma^h(t)} = 
\langle \nabs\,\vec\id, \nabs\,\vec\pi^h\,(\zeta\,\vec\eta) 
\rangle_{\Gamma^h(t)}
\quad \forall\ \zeta \in \Wht\,, \vec\eta \in \Vht\,.
\end{equation}
\end{lem}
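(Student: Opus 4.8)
The plan is to establish the two identities separately, in both cases reducing everything to sums over the simplices $\sigma^h_j(t)$ and their vertices, and exploiting that $\vec{\mathcal{V}}^h$ is piecewise linear — so that $\nabs\,.\,\vec{\mathcal{V}}^h$ is piecewise constant.

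For (\ref{eq:DElem5.6NI}) I would simply differentiate the definition (\ref{eq:defNI}) termwise. Two kinds of time dependence occur: the simplex measures $\mathcal{H}^{d-1}(\sigma^h_j(t))$ and the nodal values $(\eta\,\zeta)((\vec q^h_{j_k})^-)$. For the former, applying (\ref{eq:DEeq5.28}) with $\zeta \equiv 1$ gives $\ddt\,\mathcal{H}^{d-1}(\sigma^h_j(t)) = \int_{\sigma^h_j(t)} \nabs\,.\,\vec{\mathcal{V}}^h \dH{d-1} = \mathcal{H}^{d-1}(\sigma^h_j(t))\,(\nabs\,.\,\vec{\mathcal{V}}^h)\!\mid_{\sigma^h_j}$, since $\nabs\,.\,\vec{\mathcal{V}}^h$ is constant on each $\sigma^h_j$; because this constant is exactly the one-sided limit $(\nabs\,.\,\vec{\mathcal{V}}^h)((\vec q^h_{j_k})^-)$, the resulting contribution assembles to $\langle \eta\,\zeta, \nabs\,.\,\vec{\mathcal{V}}^h \rangle^h_{\Gamma^h(t)}$. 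For the nodal values, I would use that $\eta,\zeta \in W_T(\GhT)$ are continuous, so $(\eta\,\zeta)((\vec q^h_{j_k})^-)$ equals the product of the nodal coefficients, while (\ref{eq:p1}) yields $(\matpartxh\,\eta)(\vec q^h_{j_k}) = \ddt\,\eta_{j_k}$ and likewise for $\zeta$; hence $\ddt\,[(\eta\,\zeta)(\vec q^h_{j_k})] = ((\matpartxh\,\eta)\,\zeta + \eta\,\matpartxh\,\zeta)(\vec q^h_{j_k})$, which delivers the first two terms on the right of (\ref{eq:DElem5.6NI}). Summing the two contributions with the weights $\tfrac1d\,\mathcal{H}^{d-1}(\sigma^h_j)$ closes the identity.

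For (\ref{eq:JWB}) I restrict to $d=2$, where each $\sigma^h_j(t)$ is a straight segment with a constant unit tangent $\vec\tau_j$ (oriented, say, from $\vec q^h_{j_1}$ to $\vec q^h_{j_2}$) and $\nabs$ is the arc-length derivative $\partial_s$. On one segment, $\nabs\,\zeta = (\partial_s\zeta)\,\vec\tau_j$ and $\nabs\,.\,\vec\eta = \vec\tau_j\,.\,\partial_s\vec\eta$, so, since $\vec\tau_j$ is constant, the integrand on the left of (\ref{eq:JWB}) collapses to $\partial_s(\zeta\,\vec\tau_j\,.\,\vec\eta)$; integrating over $\sigma^h_j$ leaves only the endpoint contribution $\vec\tau_j\,.\,[(\zeta\,\vec\eta)(\vec q^h_{j_2}) - (\zeta\,\vec\eta)(\vec q^h_{j_1})]$. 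For the right-hand side I would use $\nabs\,\vec\id = \mat{\mathcal{P}}_{\Gamma^h} = \vec\tau_j\otimes\vec\tau_j$ on each segment, together with the fact that $\vec\pi^h(\zeta\,\vec\eta)$ is linear on $\sigma^h_j$ with nodal values $(\zeta\,\vec\eta)(\vec q^h_{j_k})$; then $\nabs\,\vec\id : \nabs\,\vec\pi^h(\zeta\,\vec\eta) = \vec\tau_j\,.\,\partial_s\vec\pi^h(\zeta\,\vec\eta)$ is constant on $\sigma^h_j$ and integrates to the \emph{same} endpoint expression $\vec\tau_j\,.\,[(\zeta\,\vec\eta)(\vec q^h_{j_2}) - (\zeta\,\vec\eta)(\vec q^h_{j_1})]$. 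Summing over $j$ — there are no global boundary terms, as $\Gamma^h(t)$ is closed — the two sides agree segment by segment, which proves (\ref{eq:JWB}).

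The first identity is essentially routine; the only thing to be careful about is the piecewise constancy of $\nabs\,.\,\vec{\mathcal{V}}^h$, which is what lets the measure-derivative term be re-expressed as a lumped inner product. The substantive point, and the reason for the restriction to $d=2$, lies in (\ref{eq:JWB}): the argument works because integration by parts over a $1$-simplex produces only \emph{nodal} boundary terms, and these are reproduced exactly by the nodal interpolant $\vec\pi^h(\zeta\,\vec\eta)$ paired with $\nabs\,\vec\id = \vec\tau_j\otimes\vec\tau_j$. For $d=3$ the boundary of each $2$-simplex consists of edges, along which $\zeta\,\vec\eta$ is quadratic whereas its interpolant is only linear, so the endpoint cancellation no longer takes place and a genuine edge correction would appear. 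I would therefore expect the main effort to be the careful bookkeeping of the endpoint contributions at each shared vertex and the consistent choice of orientation for $\vec\tau_j$, rather than any analytical estimate.
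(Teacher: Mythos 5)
Your argument is correct: differentiating the lumped inner product element by element, using $\ddt\,\mathcal{H}^{d-1}(\sigma^h_j)=\int_{\sigma^h_j}\nabs\,.\,\vec{\mathcal{V}}^h\dH{d-1}$ together with (\ref{eq:p1}) for the nodal values, gives (\ref{eq:DElem5.6NI}), and the segment-wise fundamental-theorem-of-calculus computation (with $\nabs\,\vec\id=\vec\tau_j\otimes\vec\tau_j$ and the interpolant reproducing the endpoint values) gives (\ref{eq:JWB}) for $d=2$. The paper itself only cites the proof of Lemma~2.1 in \cite{tpfs}, and your write-up is precisely the standard argument that reference supplies, so nothing further is needed.
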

\begin{proof}
See the proof of Lemma~2.1 in \cite{tpfs}. 
\end{proof}

Similarly to (\ref{eq:Ps},b), we introduce
\begin{subequations}
\begin{equation} \label{eq:Psh}
\mat{\mathcal{P}}_{\Gamma^h} = \mat\Id - \vec \nu^h \otimes \vec \nu^h
\quad\text{on}\ \Gamma^h(t)\,,
\end{equation}
and
\begin{equation} \label{eq:Dsh}
\mat D_s^h(\vec \eta) = \tfrac12\,\mat{\mathcal{P}}_{\Gamma^h}\,
(\nabs\,\vec\eta + 
(\nabs\,\vec \eta)^T)\,\mat{\mathcal{P}}_{\Gamma^h}
\quad\text{on}\ \Gamma^h(t)\,,
\end{equation}
\end{subequations}
where here $\nabs = \mat{\mathcal{P}}_{\Gamma^h} \,\nabla$ 
denotes the surface gradient on $\Gamma^h(t)$. 
In addition, and similarly to (\ref{eq:hatDs}), we define
\begin{equation} \label{eq:hatDsh}
\mat {\hat D}_s^h(\vec \eta) 
= \mat D_s^h(\vec \eta) - \tfrac1{d-1}\left( \nabs\,.\,\vec \eta \right)
\mat{\mathcal{P}}_{\Gamma^h}
\quad\text{on}\ \Gamma^h(t)\,.
\end{equation}
Then it is straightforward to show that
\begin{align}
& 2\left\langle\mu_\Gamma(\chi)\, \mat D_s^h(\vec \eta), \mat D_s^h(\vec \eta) 
\right\rangle_{\Gamma^h(t)}^h 
+  \left\langle\lambda_\Gamma(\chi) \,
 \nabs\,.\, \vec \eta , \nabs\,.\, \vec \eta \right\rangle_{\Gamma^h(t)}^h 
\nonumber \\ & \hspace{1cm}
= 2\left\langle\mu_\Gamma(\chi)\, \mat {\hat D}_s^h(\vec \eta), 
\mat {\hat D}_s^h(\vec \eta) \right\rangle_{\Gamma^h(t)}^h 
+  \left\langle (\lambda_\Gamma(\chi) + \tfrac2{d-1}\,\mu_\Gamma(\chi))\,
 \nabs\,.\, \vec \eta , \nabs\,.\, \vec \eta \right\rangle_{\Gamma^h(t)}^h 
\nonumber \\ & \hspace{7cm}
\qquad \forall\ \vec\eta \in \Vht\,,\chi \in \Wht
\label{eq:HGh}
\end{align}
holds, which is a discrete analogue of (\ref{eq:HG}). 

Given $\Gamma^h(t)$, we 
let $\Omega^h_+(t)$ denote the exterior of $\Gamma^h(t)$ and let
$\Omega^h_-(t)$ denote the interior of $\Gamma^h(t)$, so that
$\Gamma^h(t) = \partial \Omega^h_-(t) = \overline{\Omega^h_-(t)} \cap 
\overline{\Omega^h_+(t)}$. 
We then partition the elements of the bulk mesh 
$\mathcal{T}^h$ into interior, exterior and interfacial elements as follows.
Let
\begin{align}
\mathcal{T}^h_-(t) & := \{ o \in \mathcal{T}^h : o \subset
\Omega^h_-(t) \} \,, \nonumber \\
\mathcal{T}^h_+(t) & := \{ o \in \mathcal{T}^h : o \subset
\Omega^h_+(t) \} \,, \nonumber \\
\mathcal{T}^h_{\Gamma^h}(t) & := \{ o \in \mathcal{T}^h : o \cap
\Gamma^h(t) \not = \emptyset \} \,. \label{eq:partTh}
\end{align}
Clearly $\mathcal{T}^h = \mathcal{T}^h_-(t) \cup \mathcal{T}^h_+(t) \cup
\mathcal{T}^h_{\Gamma}(t)$ is a disjoint partition.
In addition, we define the piecewise constant unit normal 
$\vec{\nu}^h(t)$ to $\Gamma^h(t)$ such that $\vec\nu^h(t)$ points into
$\Omega^h_+(t)$.
Moreover, we introduce the discrete density 
$\rho^h(t) \in S^h_0$ and the discrete viscosity $\mu^h(t) \in S^h_0$ as
\begin{equation} \label{eq:rhoha}
\rho^h(t)\!\mid_{o} = \begin{cases}
\rho_- & o \in \mathcal{T}^h_-(t)\,, \\
\rho_+ & o \in \mathcal{T}^h_+(t)\,, \\
\tfrac12\,(\rho_- + \rho_+) & o \in \mathcal{T}^h_{\Gamma^h}(t)\,,
\end{cases}
\quad\text{and}\quad
\mu^h(t)\!\mid_{o} = \begin{cases}
\mu_- & o \in \mathcal{T}^h_-(t)\,, \\
\mu_+ & o \in \mathcal{T}^h_+(t)\,, \\
\tfrac12\,(\mu_- + \mu_+) & o \in \mathcal{T}^h_{\Gamma^h}(t)\,.
\end{cases}
\end{equation}
Finally we note that from now on we assume that $\vec f_i \in
L^2(0,T;[C(\overline\Omega)]^d)$, $i=1,2$, so that 
$\vec I^h_2\,\vec f_i$, $i=1,2$, is well-defined for almost all $t \in (0,T)$.

In what follows we will introduce two different finite element approximations
for the free boundary problem
(\ref{eq:NSa}--e), (\ref{eq:sigma}), (\ref{eq:1c}--c), (\ref{eq:sigmaG}) and 
(\ref{eq:1surf}).
The first will be based on the weak formulation (\ref{eq:weakGDa0}--f), and the
second will be based on (\ref{eq:weaka0}--f). In each case, 
$\vec U^h(\cdot, t) \in \uspace^h$ will be an approximation to 
$\vec u(\cdot, t)$,
while $P^h(\cdot, t) \in \widehat\pspace^h(t)$ 
approximates $p(\cdot, t)$,
$\rho_\Gamma^h(\cdot, t) \in \Wht$
approximates $\rho_\Gamma(\cdot, t)$
and $\Psi^h(\cdot, t) \in \Wht$
approximates $\psi(\cdot, t)$.
When designing such a finite element approximation, a
careful decision has to be made about the {\em discrete tangential velocity} of
$\Gamma^h(t)$. The most natural choice is to select the velocity of the fluid,
i.e.\ $\vec{\mathcal{V}} = \vec u$ is appropriately discretized.
This leads to a discretization of (\ref{eq:weakGDa0}--f), where the arising
variational approximation of curvature, 
which directly discretizes $\vec\varkappa$, recall (\ref{eq:LBop}), 
goes back to the seminal paper \cite{Dziuk91}.
Overall, we obtain the following semidiscrete
continuous-in-time finite element approximation.

Given $\Gamma^h(0)$, $\rho_\Gamma^h(\cdot,0) \in \Whtz$, $\vec U^h(\cdot,0) \in \uspace^h$
and $\Psi^h(\cdot, 0) \in \Whtz$, 
find $\Gamma^h(t)$ such that $\vec \id \!\mid_{\Gamma^h(t)} \in \Vht$
for $t \in [0,T]$, and functions 
$\rho_\Gamma^h \in W_T(\GhT)$, 
$\vec U^h \in \utimespace^h_{\Gamma^h} := \{
\vec \phi \in H^1(0,T; \uspace^h) : 
\vec\chi \in [W_T(\GT)]^d \text{, where }  \vec\chi(\cdot,t) = 
\vec\pi^h\,[\vec\phi\!\mid_{\Gamma^h(t)}]\ \forall\ t\in[0,T]\}$, 
$P^h \in \pspace^h_T := \{ \varphi \in L^2(0,T; \widehat\pspace) :
\varphi(t) \in \widehat\pspace^h(t) \text{ for a.e. } t \in (0,T)\}$, 
$\vec\kappa^h \in [W(\GhT)]^d$
and $\Psi^h \in W_T(\GhT)$ 
such that for almost all $t \in (0,T)$ it holds that
\begin{subequations}
\begin{align}
& \ddt\left\langle \rho_\Gamma^h, \zeta \right\rangle_{\Gamma^h(t)}^h
= \left\langle \rho_\Gamma^h, \matpartxh\, \zeta \right\rangle_{\Gamma^h(t)}^h
\quad\forall\ \zeta \in W_T(\GhT)\,,
\label{eq:sdGDa0}\\
&
\tfrac12 \left[ \ddt \left( \rho^h\,\vec U^h , \vec \xi \right)
+ \left( \rho^h\,\vec U^h_t , \vec \xi \right)
- (\rho^h\,\vec U^h, \vec \xi_t) \right]
+ 2\left(\mu^h\,\mat D(\vec U^h), \mat D(\vec \xi) \right)
 \nonumber \\ & \qquad
+ \tfrac12\left(\rho^h, 
 [(\vec I^h_2\,\vec U^h\,.\,\nabla)\,\vec U^h]\,.\,\vec \xi
- [(\vec I^h_2\,\vec U^h\,.\,\nabla)\,\vec \xi]\,.\,\vec U^h \right)
- \left(P^h, \nabla\,.\,\vec \xi\right)
+ \ddt\left\langle \rho_\Gamma^h\,\vec U^h, \vec\xi
\right\rangle_{\Gamma^h(t)}^h
\nonumber \\ & \qquad
+ 2\left\langle \mu_\Gamma(\Psi^h) \, \mat D_s^h (\vec\pi^h\,\vec U^h) , 
\mat D_s^h (\vec\pi^h\, \vec \xi ) \right\rangle_{\Gamma^h(t)}^h
\nonumber \\ & \qquad
+ \left\langle\lambda_\Gamma(\Psi^h) \,
 \nabs\,.\, (\vec\pi^h\,\vec U^h) , \nabs\,.\, 
(\vec\pi^h\,\vec \xi) \right\rangle_{\Gamma^h(t)}^h
\nonumber \\ & \qquad
- \left\langle \gamma(\Psi^h)\,\vec\kappa^h + \nabs\,[\pi^h\,\gamma(\Psi^h)],
\vec \xi \right\rangle_{\Gamma^h(t)}^h 
= \left(\rho^h\,\vec f^h_1 + \vec f^h_2, \vec \xi\right)
+ \left\langle \rho_\Gamma^h\,\vec U^h, \matpartxh\, (\vec\pi^h\,\vec \xi) 
\right\rangle_{\Gamma^h(t)}^h \nonumber \\
& \hspace{9cm}\qquad \forall\ \vec\xi \in H^1(0,T;\uspace^h) \,, \label{eq:sdGDa}\\
& \left(\nabla\,.\,\vec U^h, \varphi\right) = 0 
\qquad \forall\ \varphi \in \widehat\pspace^h(t)\,,
\label{eq:sdGDb} \\
& \left\langle \vec{\mathcal{V}}^h ,
\vec\chi \right\rangle_{\Gamma^h(t)}^h
= \left\langle \vec U^h, \vec\chi \right\rangle_{\Gamma^h(t)}^h
 \qquad\forall\ \vec\chi \in \Vht\,,
\label{eq:sdGDc} 
\\
& \left\langle \vec\kappa^h , \vec\eta \right\rangle_{\Gamma^h(t)}^h
+ \left\langle \nabs\,\vec\id, \nabs\,\vec \eta \right\rangle_{\Gamma^h(t)}
 = 0  \qquad\forall\ \vec\eta \in \Vht\,,\label{eq:sdGDd} \\
& \ddt
\left\langle \Psi^h, \chi \right\rangle_{\Gamma^h(t)}^h
+ \Ds\left\langle \nabs\, \Psi^h, \nabs\, \chi
\right\rangle_{\Gamma^h(t)}
= 
\left\langle \Psi^h, \matpartxh\, \chi \right\rangle_{\Gamma^h(t)}^h 
\qquad \forall\ \chi \in W_T(\GhT)\,,
\label{eq:sdGDe}
\end{align}
\end{subequations}
where we recall (\ref{eq:Xht}). 
Here we have defined 
$\vec f^h_i(\cdot,t) := \vec I^h_2\,\vec f_i(\cdot,t)$, $i= 1\to 2$.
We observe that (\ref{eq:sdGDc}) collapses to
$\vec{\mathcal{V}}^h = \vec\pi^h\, \vec U^h\!\mid_{\Gamma^h(t)} \in \Vht$, 
which on recalling
(\ref{eq:matpartxh}) turns out to be crucial for
the stability analysis for (\ref{eq:sdGDa0}--f). It is for this reason that we
use mass lumping in (\ref{eq:sdGDc}).

In the following theorem we derive discrete analogues of (\ref{eq:d1}), 
and the surface mass conservation property in (\ref{eq:totalpsi}), as well as a
nonnegativity result for the discrete surface material density.
\begin{thm} \label{lem:stabGD}
Let $\{(\Gamma^h, \rho^h_\Gamma,\vec U^h, P^h, \vec\kappa^h, \Psi^h)(t)
\}_{t\in[0,T]}$ 
be a solution to {\rm (\ref{eq:sdGDa0}--f)}. Then
\begin{align} \label{eq:lemGD}
& \tfrac12\,\ddt \left(\|[\rho^h]^\frac12\,\vec U^h \|_0^2 
+ \left\langle \rho_\Gamma^h\,\vec U^h, \vec U^h \right\rangle_{\Gamma^h(t)}^h
\right)
+ 2\,\| [\mu^h]^\frac12\,\mat D(\vec U^h)\|_0^2 
\nonumber \\ & \hspace{1cm}
+ 2 \left\langle \mu_\Gamma(\Psi^h)\,\mat {\hat D}_s^h (\vec\pi^h\,\vec U^h) , 
\mat {\hat D}_s^h (\vec\pi^h\, \vec U^h ) \right\rangle_{\Gamma^h(t)}^h
\nonumber \\ & \hspace{1cm}
+  \left\langle (\lambda_\Gamma(\Psi^h) + \tfrac2{d-1}\,\mu_\Gamma(\Psi^h))\,
 \nabs\,.\, (\vec\pi^h\,\vec U^h) , \nabs\,.\, 
 (\vec\pi^h\,\vec U^h) \right\rangle_{\Gamma^h(t)}^h
\nonumber \\ & \hspace{3cm}
= \left(\rho^h\,\vec f_1^h + 
\vec f_2^h, \vec U^h \right) 
+ \left\langle \gamma(\Psi^h) \,\vec\kappa^h + 
\nabs\,\pi^h\,[\gamma (\Psi^h)] , \vec U^h \right\rangle_{\Gamma^h(t)}^h.
\end{align}
In addition, it holds that
\begin{equation} \label{eq:totalrhoh}
\ddt \left\langle \rho_\Gamma^h, 1 \right\rangle_{\Gamma^h(t)} = 0
\end{equation}
and
\begin{equation} \label{eq:sddmprho}
\rho_\Gamma^h(\cdot,t) \begin{cases} > 0 \\ \geq 0 \end{cases}
\quad \forall\ t \in (0,T]
 \qquad \text{if}\quad \rho_\Gamma^h(\cdot,0) 
\begin{cases} > 0 \\ \geq 0 \end{cases} \,.
\end{equation}
\end{thm}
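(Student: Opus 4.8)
The plan is to reproduce, at the fully discrete level, the formal chain of manipulations that led from (\ref{eq:d1}) to the energy structure in the continuous setting. For the energy identity (\ref{eq:lemGD}) I would test (\ref{eq:sdGDa}) with $\vec\xi = \vec U^h$, test (\ref{eq:sdGDb}) with $\varphi = P^h \in \widehat\pspace^h(t)$, and test (\ref{eq:sdGDa0}) with $\zeta = -\tfrac12\,\pi^h[\,|\vec\pi^h\,\vec U^h|^2\,] \in W_T(\GhT)$, and then add the three resulting equations. With $\vec\xi = \vec U^h$ several terms simplify at once: the bracket $\tfrac12[(\rho^h\,\vec U^h_t,\vec U^h) - (\rho^h\,\vec U^h,\vec U^h_t)]$ vanishes, leaving $\tfrac12\,\ddt\,\|[\rho^h]^{\frac12}\,\vec U^h\|_0^2$; the discrete convection term is antisymmetric in its two arguments and hence drops out; the pressure term $-(P^h,\nabla\,.\,\vec U^h)$ cancels by (\ref{eq:sdGDb}); the bulk viscous term becomes $2\,\|[\mu^h]^{\frac12}\,\mat D(\vec U^h)\|_0^2$; and the two surface viscous terms are recast into the deviatoric form of (\ref{eq:lemGD}) by the discrete algebraic identity (\ref{eq:HGh}). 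The surface-tension term and the forcing are simply moved to the right-hand side.

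The one genuinely delicate step is the surface-inertia contribution, and this is where I expect the main obstacle to lie. The crucial structural fact is that the mass-lumped constraint (\ref{eq:sdGDc}) collapses to $\vec{\mathcal{V}}^h = \vec\pi^h\,\vec U^h$ on $\Gamma^h(t)$, so that $\matpartxh$ follows exactly the discrete fluid velocity, just as $\vec{\mathcal{V}} = \vec u$ was exploited continuously. Writing $\vec W^h := \vec\pi^h\,\vec U^h$ and $g^h := \pi^h[\,|\vec W^h|^2\,] \in W_T(\GhT)$, so that $\langle \rho_\Gamma^h\,\vec U^h, \vec U^h\rangle^h_{\Gamma^h(t)} = \langle \rho_\Gamma^h, g^h\rangle^h_{\Gamma^h(t)}$ nodally, I would apply the mass-lumped product rule (\ref{eq:DElem5.6NI}) to $\ddt\,\langle\rho_\Gamma^h, g^h\rangle^h_{\Gamma^h(t)}$ and subtract from it the density equation (\ref{eq:sdGDa0}) tested with $\zeta = -\tfrac12\,g^h$; the two terms $\langle\matpartxh\,\rho_\Gamma^h, g^h\rangle^h_{\Gamma^h(t)}$ and $\langle\rho_\Gamma^h\,g^h, \nabs\,.\,\vec{\mathcal{V}}^h\rangle^h_{\Gamma^h(t)}$ then cancel. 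Since (\ref{eq:p1}) gives $\matpartxh\,g^h = 2\,\vec W^h\,.\,\matpartxh\,\vec W^h$ at every node, this yields $\langle\rho_\Gamma^h\,\vec W^h, \matpartxh\,\vec W^h\rangle^h_{\Gamma^h(t)} = \tfrac12\,\ddt\,\langle\rho_\Gamma^h\,\vec W^h, \vec W^h\rangle^h_{\Gamma^h(t)}$. Consequently the left-hand surface-inertia term $\ddt\,\langle\rho_\Gamma^h\,\vec U^h, \vec U^h\rangle^h_{\Gamma^h(t)}$ minus the right-hand term $\langle\rho_\Gamma^h\,\vec U^h, \matpartxh(\vec\pi^h\,\vec U^h)\rangle^h_{\Gamma^h(t)}$ collapses to $\tfrac12\,\ddt\,\langle\rho_\Gamma^h\,\vec U^h, \vec U^h\rangle^h_{\Gamma^h(t)}$, which is precisely the surface kinetic-energy contribution in (\ref{eq:lemGD}). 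The care needed here is entirely in keeping every product interpreted nodally under $\langle\cdot,\cdot\rangle^h_{\Gamma^h(t)}$.

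For the conservation property (\ref{eq:totalrhoh}) I would test (\ref{eq:sdGDa0}) with $\zeta = 1$. Because $\matpartxh\,1 = 0$, the right-hand side vanishes and $\ddt\,\langle\rho_\Gamma^h, 1\rangle^h_{\Gamma^h(t)} = 0$. Since $\rho_\Gamma^h(\cdot,t)$ is continuous and piecewise linear, vertex quadrature integrates it exactly, so $\langle\rho_\Gamma^h, 1\rangle^h_{\Gamma^h(t)} = \langle\rho_\Gamma^h, 1\rangle_{\Gamma^h(t)} = \int_{\Gamma^h(t)}\rho_\Gamma^h\dH{d-1}$, and (\ref{eq:totalrhoh}) follows.

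Finally, for the non-negativity (\ref{eq:sddmprho}) I would pass to the nodal ODE system. Testing (\ref{eq:sdGDa0}) with $\zeta = \chi^h_l$ and using $\matpartxh\,\chi^h_l = 0$ from (\ref{eq:mpbf}) gives $\ddt\,\langle\rho_\Gamma^h, \chi^h_l\rangle^h_{\Gamma^h(t)} = 0$. By the definition (\ref{eq:defNI}) together with (\ref{eq:bf}), $\langle\rho_\Gamma^h, \chi^h_l\rangle^h_{\Gamma^h(t)} = m_l(t)\,\rho_\Gamma^h(\vec q^h_l(t),t)$, where $m_l(t) := \tfrac1d\sum \mathcal{H}^{d-1}(\sigma^h_j)$, summed over the simplices $\sigma^h_j$ having $\vec q^h_l$ as a vertex, is the lumped mass at the $l$-th node. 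Hence $m_l(t)\,\rho_\Gamma^h(\vec q^h_l(t),t)$ is constant in time, so $\rho_\Gamma^h(\vec q^h_l(t),t) = [m_l(0)/m_l(t)]\,\rho_\Gamma^h(\vec q^h_l(0),0)$. As the simplices of $\Gamma^h(t)$ are non-degenerate we have $m_l(t) > 0$, so the sign of each initial nodal value is preserved for all $t \in (0,T]$; and since a continuous piecewise linear function is (strictly) positive exactly when all of its nodal values are, (\ref{eq:sddmprho}) follows.
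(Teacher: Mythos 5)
Your proposal is correct and takes essentially the same route as the paper: the identical test functions $\vec\xi=\vec U^h$ in (\ref{eq:sdGDa}), $\varphi=P^h$ in (\ref{eq:sdGDb}) and $\zeta=-\tfrac12\,\pi^h[\,|\vec U^h\!\mid_{\Gamma^h(t)}|^2]$, $\zeta=1$, $\zeta=\chi^h_k$ in (\ref{eq:sdGDa0}), with (\ref{eq:HGh}) supplying the deviatoric rewriting and (\ref{eq:mpbf}) the nodal conservation for (\ref{eq:sddmprho}). Your detour through the lumped product rule (\ref{eq:DElem5.6NI}) for the surface-inertia term just re-derives the nodal identity (\ref{eq:phU2}) that the paper reads off directly from (\ref{eq:p1}).
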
 
\begin{proof}
On recalling (\ref{eq:HGh}),
the desired result (\ref{eq:lemGD}) follows on choosing
$\vec \xi = \vec U^h$ in (\ref{eq:sdGDa}), $\varphi = P^h$ in (\ref{eq:sdGDb})
and $\zeta \in W_T(\GT)$ with 
$\zeta(\cdot,t) = -\frac12\,\pi^h\,[|\vec U^h\!\mid_{\Gamma^h(t)}|^2]$ 
for all $t\in[0,T]$, recall $\vec U^h \in \utimespace^h_{\Gamma^h}$,
in both 
(\ref{eq:sdGDa0}) and (\ref{eq:p1}), where we observe that the latter implies
that
\begin{equation} \label{eq:phU2}
\tfrac12\,\matpartxh\,\pi^h\,[\vec U^h|^2 = \pi^h\,[ \vec U^h\,.\,(\matpartxh\,
\vec\pi^h\,\vec U^h)] \quad \text{on }\ \Gamma^h(t)\,.
\end{equation}
In addition, the conservation property (\ref{eq:totalrhoh}) follows from
choosing $\zeta = 1$ in (\ref{eq:sdGDa0}).
Finally, it follows from (\ref{eq:sdGDa0}), on recalling (\ref{eq:mpbf}), that 
\begin{equation} \label{eq:rhohpos}
\ddt\left\langle \rho_\Gamma^h, \chi_k^h
\right\rangle_{\Gamma^h(t)}^h
=\ddt \left[ \left\langle 1, \chi^h_k \right\rangle_{\Gamma^h(t)}
\rho_\Gamma^h(\vec q^h_k(t), t) \right] = 0\,,
\end{equation}
for $k = 1,\ldots,K_\Gamma$, which yields our desired result 
(\ref{eq:sddmprho}).
\end{proof}

In the following two theorems we derive discrete analogues of (\ref{eq:d5}) 
for the scheme (\ref{eq:sdGDa0}--f). First we consider the case of constant
surface tension, recall (\ref{eq:Fconst}). 

\begin{thm} \label{thm:stabGDconst}
Let $\gamma$ be defined as in {\rm (\ref{eq:Fconst})}, let 
{\rm (\ref{eq:muconst})} hold and let \linebreak
$\{(\Gamma^h, \rho_\Gamma^h, \vec U^h, P^h, \vec\kappa^h)(t)\}_{t\in[0,T]}$ 
be a solution to {\rm (\ref{eq:sdGDa0}--e)}. Then
it holds that 
\begin{align} \label{eq:stabGDconst}
& \ddt\left(\tfrac12\,\|[\rho^h]^\frac12\,\vec U^h\|^2_{0} 
+ \tfrac12 
\left\langle \rho_\Gamma^h\,\vec U^h, \vec U^h \right\rangle_{\Gamma^h(t)}^h
+ \overline\gamma \, \mathcal{H}^{d-1}(\Gamma^h(t)) \right)
+ 2\,\| [\mu^h]^\frac12\,\mat D(\vec U^h)\|_0^2 
\nonumber \\ & \hspace{5mm}
+ 2\,\overline\mu_\Gamma \left\langle \mat {\hat D}_s^h (\vec\pi^h\,\vec U^h) , 
\mat {\hat D}_s^h (\vec\pi^h\, \vec U^h ) \right\rangle_{\Gamma^h(t)}
+ (\overline\lambda_\Gamma + \tfrac2{d-1}\,\overline\mu_\Gamma) 
\left\langle \nabs\,.\, (\vec\pi^h\,\vec U^h) , \nabs\,.\, 
(\vec\pi^h\,\vec U^h) \right\rangle_{\Gamma^h(t)}
\nonumber \\ & \hspace{4cm}
= (\rho^h\,\vec f_1^h + \vec f_2^h, \vec U^h) \,.
\end{align}
\end{thm}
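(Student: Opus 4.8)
The plan is to deduce \eqref{eq:stabGDconst} directly from the general discrete energy identity \eqref{eq:lemGD} of Theorem~\ref{lem:stabGD}, by specialising the coefficients and by establishing the discrete analogue of the constant--surface--tension computation \eqref{eq:constgam}.

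First I would take \eqref{eq:lemGD} as the starting point and impose the standing assumptions \eqref{eq:muconst} and \eqref{eq:Fconst}, so that $\mu_\Gamma(\Psi^h)\equiv\overline\mu_\Gamma$, $\lambda_\Gamma(\Psi^h)\equiv\overline\lambda_\Gamma$ and $\gamma(\Psi^h)\equiv\overline\gamma$ are genuine constants. Two simplifications then occur. On the right-hand side, $\pi^h\,[\gamma(\Psi^h)]=\overline\gamma$ is constant on $\Gamma^h(t)$, whence $\nabs\,\pi^h\,[\gamma(\Psi^h)]=\vec0$ and the surfactant-gradient contribution drops out, leaving only the term $\overline\gamma\,\langle\vec\kappa^h,\vec U^h\rangle^h_{\Gamma^h(t)}$. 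On the left-hand side, I would note that $\vec\pi^h\,\vec U^h$ is piecewise linear, so that $\mat{\hat D}_s^h(\vec\pi^h\,\vec U^h)$ and $\nabs\,.\,(\vec\pi^h\,\vec U^h)$ are piecewise constant on $\{\sigma^h_j\}$; consequently the integrands of the two surface-viscosity terms are piecewise constant, and for such functions the mass-lumped product \eqref{eq:defNI} coincides with the exact $L^2$--product $\langle\cdot,\cdot\rangle_{\Gamma^h(t)}$ (both reduce to $\sum_j\mathcal H^{d-1}(\sigma^h_j)$ times the constant elementwise value of the product). Since the viscosity coefficients are now constant, this is precisely what allows one to replace $\langle\cdot,\cdot\rangle^h_{\Gamma^h(t)}$ by $\langle\cdot,\cdot\rangle_{\Gamma^h(t)}$ in those two terms, matching the form asserted in \eqref{eq:stabGDconst}.

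The crux is the discrete version of \eqref{eq:constgam}, namely showing that $\overline\gamma\,\langle\vec\kappa^h,\vec U^h\rangle^h_{\Gamma^h(t)}=-\overline\gamma\,\ddt\,\mathcal H^{d-1}(\Gamma^h(t))$. Here I would exploit that $\vec{\mathcal{V}}^h\in\Vht$ by \eqref{eq:Xht} and that $\vec\kappa^h(\cdot,t)\in\Vht$, so that both are admissible test functions. Choosing $\vec\chi=\vec\kappa^h$ in the mass-lumped velocity identity \eqref{eq:sdGDc} gives $\langle\vec\kappa^h,\vec U^h\rangle^h_{\Gamma^h(t)}=\langle\vec\kappa^h,\vec{\mathcal{V}}^h\rangle^h_{\Gamma^h(t)}$, and then choosing $\vec\eta=\vec{\mathcal{V}}^h$ in the curvature identity \eqref{eq:sdGDd} yields $\langle\vec\kappa^h,\vec{\mathcal{V}}^h\rangle^h_{\Gamma^h(t)}=-\langle\nabs\,\vec\id,\nabs\,\vec{\mathcal{V}}^h\rangle_{\Gamma^h(t)}$. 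Using $\nabs\,\vec\id=\mat{\mathcal{P}}_{\Gamma^h}$ together with $\mat{\mathcal{P}}_{\Gamma^h}:\nabs\,\vec{\mathcal{V}}^h=\nabs\,.\,\vec{\mathcal{V}}^h$ reduces the right-hand side to $-\langle1,\nabs\,.\,\vec{\mathcal{V}}^h\rangle_{\Gamma^h(t)}$, which by the transport formula \eqref{eq:DEeq5.28} (applied with $\zeta=1$, so that $\matpartxh\,1=0$, and summed over the simplices) equals $-\ddt\,\mathcal H^{d-1}(\Gamma^h(t))$. Moving this term to the left-hand side produces the $\overline\gamma\,\ddt\,\mathcal H^{d-1}(\Gamma^h(t))$ contribution inside the total time derivative, and combining with the specialised \eqref{eq:lemGD} gives \eqref{eq:stabGDconst}.

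I expect the main obstacle to be the careful bookkeeping of the mass-lumped versus exact inner products: one must verify that mass lumping is harmless exactly where it appears, i.e.\ both on the piecewise-constant viscosity integrands and on the curvature/velocity pairings engineered in \eqref{eq:sdGDc}--\eqref{eq:sdGDd}, and that the admissibility $\vec{\mathcal{V}}^h,\vec\kappa^h\in\Vht$ is what legitimises the chosen test functions. Everything else is a direct specialisation of Theorem~\ref{lem:stabGD} and a faithful transcription of the continuous computation \eqref{eq:constgam} to the discrete polyhedral surface.
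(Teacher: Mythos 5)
Your proposal is correct and follows essentially the same route as the paper: specialise the energy identity \eqref{eq:lemGD} to constant coefficients, and establish the discrete analogue of \eqref{eq:constgam} by testing \eqref{eq:sdGDc} with $\vec\kappa^h$ and \eqref{eq:sdGDd} with $\vec{\mathcal{V}}^h$, then invoking the discrete transport formula. Your additional remark that mass lumping is exact on the piecewise-constant surface-viscosity integrands (justifying the unlumped inner products in \eqref{eq:stabGDconst}) is a correct detail the paper leaves implicit.
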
 
\begin{proof}
Similarly to (\ref{eq:constgam}), it follows from (\ref{eq:sdGDc},e) and
(\ref{eq:DElem5.6}) that
\begin{align}
\overline\gamma \left\langle \vec\kappa^h, \vec U^h \right\rangle_{\Gamma^h(t)}^h
&= \overline\gamma \left\langle \vec\kappa^h, \vec{\mathcal{V}}^h 
\right\rangle_{\Gamma^h(t)}^h
= - \overline\gamma \left\langle \nabs\,\vec\id, \nabs\,\vec{\cal V}^h 
\right\rangle_{\Gamma^h(t)}
=- \overline\gamma \left\langle 1, \nabs \,.\,\vec{\cal V}^h 
\right\rangle_{\Gamma^h(t)} \nonumber \\ &
= - \overline\gamma\, \ddt\,{\cal H}^{d-1}(\Gamma^h(t))\,.
\label{eq:constgamGD}
\end{align}  
Combining (\ref{eq:constgamGD}) and (\ref{eq:lemGD}) for the special case
(\ref{eq:Fconst}) yields the desired result (\ref{eq:stabGDconst}). 
\end{proof}

Next we generalize the results from Theorem~\ref{thm:stabGDconst} to the case
of a general surface tension function $\gamma$ as introduced in
(\ref{eq:gammaprime}), using the techniques introduced in 
\cite{tpfs}.
Here, similarly to (\ref{eq:d4}), it will be crucial to test (\ref{eq:sdGDe}) 
with an appropriate discrete variant of $F'(\Psi^h)$. It is for this reason
that we have to make the following well-posedness assumption:
\begin{equation} \label{eq:assPsih}
\Psi^h(\cdot, t) < \psi_\infty \quad\text{on $\Gamma^h(t)$}\,,
\quad \forall\ t \in [0,T]\,.
\end{equation}
The theorem also establishes nonnegativity of $\Psi^h$ 
under the assumption, if $\Ds >0$,
that
\begin{equation} \label{eq:chij}
\int_{\sigma^h_j(t)} \nabs \chi^h_i \,.\,\nabs \chi^h_k \dH{d-1} \leq 0 
\quad \forall\ i \neq k\,,\quad \forall\ t \in [0,T]\,,
\qquad j = 1,\ldots,J_\Gamma\,.
\end{equation}
We note that (\ref{eq:chij}) always holds for $d=2$, and it holds for $d=3$ if
all the triangles $\sigma^h_j(t)$ of $\Gamma^h(t)$ have no obtuse angles. A
direct consequence of (\ref{eq:chij}) is that for any monotonic
function $G \in C^{0,1}(\R)$ it holds for all $\xi \in \Wht$ that
\begin{align} \label{eq:LG}
& L_G\,\int_{\sigma^h_j(t)}
\nabs\, \xi\,.\, \nabs\, \pi^h\,[G(\xi)] \dH{d-1} \geq
\int_{\sigma^h_j(t)} \nabs\, \pi^h\,[G(\xi)] \,.\, \nabs\, \pi^h\,[G(\xi)] 
\dH{d-1} 
\quad \forall\ t \in [0,T]\,, \nonumber \\ & \hspace{11cm} 
\quad j = 1,\ldots,J_\Gamma\,,
\end{align}
where $L_G \in \R_{>0}$ denotes the Lipschitz constant of $G$.
For example, (\ref{eq:LG}) holds for 
\begin{equation} \label{eq:pm}
G(r) = [r]_\pm := \pm \max\{0, \pm r\} \qquad \forall\ r \in \R
\end{equation}
with $L_G = 1$.

For the following theorem, we denote the $L^\infty$--norm on $\Gamma^h(t)$ by
$\| \cdot \|_{\infty, \Gamma^h(t)}$, i.e.\
$\| z \|_{\infty, \Gamma^h(t)} := \esssup_{\Gamma^h(t)} |z|$ for
$z : \Gamma^h(t) \to \R$.

\begin{thm} \label{thm:stabGD}
Let $\{(\Gamma^h, \rho_\Gamma^h, \vec U^h, P^h, \vec\kappa^h, \Psi^h)(t)\}_{t\in[0,T]}$ 
be a solution to {\rm (\ref{eq:sdGDa0}--f)}. Then
\begin{equation} \label{eq:totalpsih}
\ddt \left\langle \Psi^h, 1 \right\rangle_{\Gamma^h(t)} = 0\,.
\end{equation}
In addition, if $\Ds=0$ or if {\rm (\ref{eq:chij})} and
\begin{equation} \label{eq:Xinfty}
\max_{0\leq t \leq T} \| \nabs\,.\,\vec{\mathcal{V}}^h \|_{\infty,\Gamma^h(t)} <
\infty
\end{equation}
hold, then
\begin{equation} \label{eq:sddmp}
\Psi^h(\cdot,t) \geq 0 \quad \forall\ t \in (0,T]
 \qquad \text{if}\quad \Psi^h(\cdot,0) \geq 0\,.
\end{equation}
Moreover, if $d=2$ and if {\rm (\ref{eq:sddmp})} and
{\rm (\ref{eq:assPsih})} hold, 
then
\begin{align} \label{eq:stabGD}
& \ddt\left(\tfrac12\,\|[\rho^h]^\frac12\,\vec U^h\|^2_{0} 
+ \tfrac12 
\left\langle \rho_\Gamma^h\,\vec U^h, \vec U^h \right\rangle_{\Gamma^h(t)}^h
+ \left\langle F(\Psi^h) , 1 \right\rangle_{\Gamma^h(t)}^h \right) 
+ 2\,\| [\mu^h]^\frac12\,\mat D(\vec U^h)\|_0^2 
\nonumber \\ & \hspace{5mm}
+ 2 \left\langle \mu_\Gamma(\Psi^h)\, \mat {\hat D}_s^h (\vec\pi^h\,\vec U^h) , 
\mat {\hat D}_s^h (\vec\pi^h\, \vec U^h ) \right\rangle_{\Gamma^h(t)}^h
\nonumber \\ & \hspace{5mm}
+ \left\langle (\lambda_\Gamma(\Psi^h) + \tfrac2{d-1}\,\mu_\Gamma(\Psi^h))\,
 \nabs\,.\, (\vec\pi^h\,\vec U^h) , \nabs\,.\, 
(\vec\pi^h\,\vec U^h) \right\rangle_{\Gamma^h(t)}^h
\leq (\rho^h\,\vec f_1^h + \vec f_2^h, \vec U^h) \,.
\end{align}
\end{thm}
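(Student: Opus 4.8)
The plan is to establish the three assertions in turn, in each case transcribing the formal continuous computation \mbox{(\ref{eq:d1})--(\ref{eq:d5})} into the mass-lumped discrete setting, with the surfactant equation (\ref{eq:sdGDe}) doing most of the work.

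The conservation property (\ref{eq:totalpsih}) is immediate: choosing $\chi = 1$ in (\ref{eq:sdGDe}) makes both $\nabs\,\chi$ and $\matpartxh\,\chi$ vanish, and since $\Psi^h(\cdot,t) \in \Wht$ is piecewise linear the vertex quadrature defining $\langle\cdot,\cdot\rangle^h_{\Gamma^h(t)}$ is exact against constants, so $\langle\Psi^h,1\rangle^h_{\Gamma^h(t)} = \langle\Psi^h,1\rangle_{\Gamma^h(t)}$ and its time derivative is zero. For the nonnegativity (\ref{eq:sddmp}) I would pass to the nodal system. Writing $\Psi^h(\cdot,t) = \sum_k \Psi_k(t)\,\chi^h_k(\cdot,t)$ and testing (\ref{eq:sdGDe}) with $\chi = \chi^h_i$, the lumped mass matrix is diagonal, so the time-derivative term is $\ddt[m_i(t)\,\Psi_i(t)]$ with $m_i(t) := \langle 1,\chi^h_i\rangle^h_{\Gamma^h(t)} > 0$, while (\ref{eq:mpbf}) annihilates the right-hand side. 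When $\Ds = 0$ this gives $m_i(t)\,\Psi_i(t) = m_i(0)\,\Psi_i(0)$, so each nodal value retains its sign and (\ref{eq:sddmp}) follows with no further hypotheses. When $\Ds > 0$ the system reads $\ddt[m_i\,\Psi_i] + \Ds \sum_k A_{ik}\,\Psi_k = 0$, where $A_{ik} = \langle\nabs\,\chi^h_k, \nabs\,\chi^h_i\rangle_{\Gamma^h(t)}$; by (\ref{eq:chij}) the off-diagonal entries satisfy $A_{ik} \leq 0$ for $i \neq k$, and the partition-of-unity property $\sum_k \chi^h_k \equiv 1$ forces zero row sums, yielding the M-matrix structure. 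A discrete maximum principle then applies: at a hypothetical first time $t_\ast$ and node $i$ at which a nodal value reaches zero from above with all others still nonnegative, the equation gives $m_i\,\dot\Psi_i = -\Ds \sum_{k\neq i} A_{ik}\,\Psi_k \geq 0$, contradicting a downward crossing. Here (\ref{eq:Xinfty}) is essential: the bound on $\nabs\,.\,\vec{\mathcal{V}}^h$ controls $\dot m_i = \langle\chi^h_i, \nabs\,.\,\vec{\mathcal{V}}^h\rangle^h_{\Gamma^h(t)}$ and keeps every $m_i$ bounded away from zero, so the ODE system is well posed and the argument can be made rigorous by the usual $\epsilon\,e^{\Lambda t}$ perturbation.

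The energy inequality (\ref{eq:stabGD}) is the core of the theorem, and I would build it on Theorem~\ref{lem:stabGD}. Starting from (\ref{eq:lemGD}), it remains to absorb the surface-tension coupling $\langle\gamma(\Psi^h)\,\vec\kappa^h + \nabs\,\pi^h[\gamma(\Psi^h)], \vec U^h\rangle^h_{\Gamma^h(t)}$ into $\ddt\langle F(\Psi^h),1\rangle^h_{\Gamma^h(t)}$ plus a nonnegative surfactant dissipation, exactly as (\ref{eq:d45}) does in the continuous case. Since the lumped products see only nodal values and (\ref{eq:sdGDc}) collapses to $\vec{\mathcal{V}}^h = \vec\pi^h\,\vec U^h$, I may replace $\vec U^h$ by $\vec{\mathcal{V}}^h$ in this term at no cost. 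I would then reproduce the chain \mbox{(\ref{eq:d4})--(\ref{eq:d45})} discretely: test (\ref{eq:sdGDe}) with the regularized interpolant $\chi = \pi^h[F'(\Psi^h + \alpha)]$, which is well defined for $\alpha \in \Rplus$ because (\ref{eq:sddmp}) and (\ref{eq:assPsih}) confine $\Psi^h + \alpha$ to a compact subinterval of $(0,\psi_\infty)$; then use the lumped product rule (\ref{eq:DElem5.6NI}), the relation $\gamma = F - r\,F'$ from (\ref{eq:F}), and the discrete counterpart of (\ref{eq:d3}) supplied at the nodes by (\ref{eq:Fdd}). The step that forces the restriction $d = 2$ is the curvature manipulation: combining (\ref{eq:sdGDd}) tested with $\vec\pi^h[\gamma(\Psi^h)\,\vec{\mathcal{V}}^h]$ with the discrete integration-by-parts identity (\ref{eq:JWB}) of Lemma~\ref{lem:DElem5.6NI} rewrites $\langle\gamma(\Psi^h)\,\vec\kappa^h + \nabs\,\pi^h[\gamma(\Psi^h)], \vec{\mathcal{V}}^h\rangle^h_{\Gamma^h(t)}$ in divergence form, and (\ref{eq:JWB}) holds only for $d = 2$, which is precisely the hypothesis. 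Assembling these ingredients yields a discrete analogue of (\ref{eq:d123}) in which the surfactant-diffusion contribution appears as $\Ds\langle\nabs\,\Psi^h, \nabs\,\pi^h[F'(\Psi^h + \alpha)]\rangle_{\Gamma^h(t)}$ on the left-hand side.

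It then remains to sign this diffusion term and to pass to the limit. Its nonnegativity follows from the convexity of $F$, recall (\ref{eq:Fdd}) and (\ref{eq:gammaprime}): $F'$ is monotonically increasing, so (\ref{eq:chij}) --- automatic for $d = 2$ --- together with (\ref{eq:LG}) gives $\langle\nabs\,\Psi^h, \nabs\,\pi^h[F'(\Psi^h + \alpha)]\rangle_{\Gamma^h(t)} \geq 0$. Unlike in the continuous identity (\ref{eq:d45}), the discrete chain rule for this term is not exact, so rather than being rewritten as an exact square it is simply discarded, which is what turns the emerging equality into the inequality recorded in (\ref{eq:stabGD}) and explains why no explicit surfactant dissipation survives on the left-hand side. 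Finally, letting $\alpha \to 0$ and invoking (\ref{eq:F0}) to kill the $\alpha$-dependent remainder, together with the continuity of $F$ on $[0,\psi_\infty)$, turns $\langle F(\Psi^h + \alpha),1\rangle^h_{\Gamma^h(t)}$ into $\langle F(\Psi^h),1\rangle^h_{\Gamma^h(t)}$ and produces the discrete counterpart of (\ref{eq:d45}); adding this to (\ref{eq:lemGD}) cancels the surface-tension coupling and delivers (\ref{eq:stabGD}). I expect the main obstacle to be exactly this discrete surface-tension manipulation --- reproducing the continuous integration by parts through the mass-lumped products --- since it is the identity (\ref{eq:JWB}) used there that confines the stability result to two dimensions.
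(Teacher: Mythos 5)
Your proposal is correct, and it is in substance the argument the paper intends: the paper's own proof is only a few lines long because it outsources both the nonnegativity result (\ref{eq:sddmp}) and the key inequality
$\ddt\langle F(\Psi^h),1\rangle^h_{\Gamma^h(t)} \leq -\langle\gamma(\Psi^h)\,\vec\kappa^h + \nabs\,\pi^h[\gamma(\Psi^h)],\vec U^h\rangle^h_{\Gamma^h(t)}$
to Theorem~3.3 of the cited surfactant paper \cite{tpfs}, and then simply adds that inequality to (\ref{eq:lemGD}). What you have written is a faithful reconstruction of the content of those citations: the $\alpha$-regularized test function $\pi^h[F'(\Psi^h+\alpha)]$, the lumped product rule (\ref{eq:DElem5.6NI}), the nodal exactness of the chain rule (\ref{eq:Fdd}), the identity (\ref{eq:JWB}) as the sole reason for the restriction $d=2$, the sign of the diffusion term via (\ref{eq:LG}) and monotonicity of $F'$, and the limit $\alpha\to0$ via (\ref{eq:F0}) all match the paper's continuous template (\ref{eq:d4})--(\ref{eq:d45}) and its discrete counterpart. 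The one place where you genuinely diverge is (\ref{eq:sddmp}): you argue by a nodal M-matrix / first-crossing maximum principle, whereas the paper (judging from the analogous proof of (\ref{eq:sddmprhoHG}) via (\ref{eq:rhonew1})--(\ref{eq:rhonew2})) tests with $\pi^h[\Psi^h]_-$, signs the diffusion term with (\ref{eq:LG}) for $G=[\cdot]_-$, and closes with a Gronwall inequality in which (\ref{eq:Xinfty}) enters directly as the Gronwall constant. Both routes are valid; the Gronwall version avoids the delicate rigor issues at a non-strict first touching time that you acknowledge having to patch with an $\epsilon\,e^{\Lambda t}$ perturbation, at the cost of being slightly less transparent about the role of (\ref{eq:chij}).
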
 
\begin{proof}
The conservation property (\ref{eq:totalpsih}) follows immediately from
choosing $\chi = 1$ in (\ref{eq:sdGDe}).
A proof of the result (\ref{eq:sddmp}) can be found in 
\citet[Theorem~3.3]{tpfs}. Also in \citet[Theorem~3.3]{tpfs},
on using (\ref{eq:LG}), it was shown that
\begin{align} \label{eq:sp40}
\ddt\, \left\langle F(\Psi^h ), 1 \right\rangle_{\Gamma^h(t)}^h & \leq
- \left\langle\gamma(\Psi^h)\,\vec\kappa^h + \nabs\,\pi^h\,[\gamma(\Psi^h)], 
\vec U^h \right\rangle_{\Gamma^h(t)}^h ,
\end{align}
which is a discrete analogue of (\ref{eq:d45}). Combining (\ref{eq:sp40}) 
with (\ref{eq:lemGD}) yields the desired result (\ref{eq:stabGD}).
\end{proof}

We note that while (\ref{eq:sdGDa0}--f) is a very natural approximation,
a drawback in practice is that the finitely many vertices of 
the triangulations $\Gamma^h(t)$ are moved with the flow, which can lead to
coalescence. If a remeshing procedure is applied to $\Gamma^h(t)$, then
theoretical results like stability are no longer valid.

It is with this in mind that we would like to introduce an alternative finite
element approximation. It will be based on the weak formulation 
(\ref{eq:weaka0}--f), and on the schemes from \cite{spurious,fluidfbp} for the
two-phase flow problem in the bulk. 

The main difference to (\ref{eq:sdGDa0}--f) is that (\ref{eq:sdGDc}) is replaced
with a discrete variant of (\ref{eq:weakc}). In particular, the discrete
tangential velocity of $\Gamma^h(t)$ is not defined via $\vec U^h(\cdot, t)$, 
but it
is chosen totally independent from the surrounding fluid. In fact, the discrete
tangential velocity is not prescribed directly, but it is implicitly
introduced via the novel approximation of curvature which was first introduced
by the authors in \cite{triplej} for the case $d=2$, and in \cite{gflows3d} for
the case $d=3$. This discrete tangential velocity is such that, 
in the case $d=2$, $\Gamma^h(t)$
will remain equidistributed for all times $t \in (0,T]$. For $d=3$, a weaker 
property can be shown, which still guarantees good meshes in practice.
We refer to \cite{triplej,gflows3d} for more details.

Following similar ideas in \cite{surf,surf2d}, we introduce regularizations
$F_\epsilon \in C^2(-\infty,\psi_\infty)$ of $F\in C^2(0,\psi_\infty)$, where
$\epsilon > 0$ is a regularization parameter. In particular, we set
\begin{subequations}
\begin{equation} \label{eq:Freg}
F_\epsilon(r) = \begin{cases}
F(r) & r \geq \epsilon\,, \\
F(\epsilon) + F'(\epsilon)\,(r-\epsilon) +
\frac12\,F''(\epsilon)\,(r-\epsilon)^2 & r \leq \epsilon\,,
\end{cases}
\end{equation}
which in view of (\ref{eq:F}) leads to
\begin{equation} \label{eq:geps}
\gamma_\epsilon(r) = \begin{cases}
\gamma(r) & r \geq \epsilon\,, \\
\gamma(\epsilon) + 
\frac12\,F''(\epsilon)\,(\epsilon^2 - r^2) & r \leq \epsilon\,,
\end{cases}
\end{equation}
\end{subequations}
so that
\begin{equation} \label{eq:Feps}
\gamma_\epsilon(r) = F_\epsilon(r) - r\,F'_\epsilon(r) 
\quad\text{and}\quad \gamma_\epsilon'(r) = - r\,F_\epsilon''(r) 
\qquad \forall\ r < \psi_\infty \,.
\end{equation}
We also introduce the matrix functions 
$\mat\Xi^h(\cdot,t) : \Wht \to [L^\infty(\Gamma^h(t))]^{d \times d}$ 
defined such that for
all $z^h \in \Wht$ it holds that
\begin{equation} \label{eq:Xih}
\mat\Xi^h(z^h,t) \! \mid_{\sigma^h_j(t)} \in \R^{d\times d} \quad\text{and}\quad
\mat\Xi^h(z^h,t) \, \nabs\,z^h
= \tfrac12\,\nabs\,\pi^h \,[|z^h|^2] \ \text{ on } \sigma^h_j(t)\,,
j = 1,\ldots,J_\Gamma\,.
\end{equation}
Here we introduce (\ref{eq:Xih}) in order to be able to mimic
(\ref{eq:nbgn}) on the discrete level.
The construction for $\mat\Xi^h$ is given as follows. Let $\hat\sigma$
denote the standard $(d-1)$-dimensional reference simplex in 
$\R^{d-1} \times \{0\} \subset \R^d$, 
with vertices $\{\vec 0, \vec\ek_1, \ldots, \vec\ek_{d-1} \}$. 
For each $\sigma = \sigma^h_j(t)$, $j= 1,\ldots,J_\Gamma$,
with vertices $\{ \vec {\rm p}_i \}_{i=0}^{d-1}$
there exists an affine linear map 
$\vec{\mathcal{M}}_\sigma : \hat\sigma \to \sigma$ with
$\vec{\mathcal{M}}_\sigma (\vec z) = \vec {\rm p}_0 + \mat M_\sigma\,\vec z$ 
for all $\vec z \in \R^d$, where $\mat M_\sigma \in \R^{d \times d}$ is 
nonsingular, such that
$\vec{\mathcal{M}}_\sigma (\vec \ek_i) = \vec {\rm p}_i$, $i=1,\ldots,d-1$. 
In particular, the columns of $\mat M_\sigma$ are given by
$\vec {\rm p}_i - \vec {\rm p}_0$, $i=1,\ldots,d$, where
$\vec {\rm p}_d \in \R^d$ is an arbitrary point that does not lie within the
hyperplane that contains $\sigma$.
On choosing $\vec {\rm p}_d$ such that 
$(\vec {\rm p}_d - \vec {\rm p}_0) \,.\,(\vec {\rm p}_i - \vec {\rm p}_0) = 0$
for $i=1,\ldots,d-1$, we observe that
$\nabs\,\xi  = (\mat M_\sigma^T)^{-1}\,
[\nabs\,(\xi \circ \vec{\mathcal{M}}_\sigma)] 
\circ (\vec{\mathcal{M}}_\sigma)^{-1}$
on $\sigma$, where we note that 
$\nabs\, \eta= \nabla\,\eta - (\vec\ek_d\,.\,\nabla\,\eta)\,\vec\ek_d$ 
on $\hat\sigma$.
Hence we define
\begin{subequations}
\begin{equation} \label{eq:Xiho}
\mat\Xi^h( z^h , t) \!\mid_\sigma = 
(\mat M_\sigma^T)^{-1}\,\mat{\hat\Xi}^h_\sigma (z^h)\,\mat M_\sigma^T\,,
\end{equation}
where $\mat{\hat\Xi}^h_{\sigma} (z^h) \in \R^{d \times d}$ is the diagonal matrix
with entries
\begin{equation} \label{eq:Xihojj}
[\mat{\hat\Xi}^h_{\sigma} (z^h)]_{ii} = 
\begin{cases}
\frac12\,(z^h(\vec {\rm p}_0) + z^h(\vec {\rm p}_i)) & i = 1,\ldots, d-1\,, \\
0 & i = d \,.
\end{cases}
\end{equation}
\end{subequations}

We propose the following semidiscrete
analogue of the weak formulation (\ref{eq:weaka0}--f).
Given $\Gamma^h(0)$, $\rho_\Gamma^h(\cdot,0) \in \Whtz$, $\vec U^h(\cdot,0) \in \uspace^h$
and $\Psi^h(\cdot, 0) \in \Whtz$, 
find $\Gamma^h(t)$ such that $\vec \id \!\mid_{\Gamma^h(t)} \in \Vht$
for $t \in [0,T]$, and functions 
$\rho_\Gamma^h \in W_T(\GhT)$, 
$\vec U^h \in \utimespace^h_{\Gamma^h}$,
$P^h \in \pspace^h_T$, 
$\kappa^h \in W(\GhT)$
and $\Psi^h \in W_T(\GhT)$ 
such that for almost all $t \in (0,T)$ it holds that
\begin{subequations}
\begin{align}
& \ddt\left\langle \rho_\Gamma^h, \zeta \right\rangle_{\Gamma^h(t)}^h
= \left\langle \rho_\Gamma^h, \matpartxh\, \zeta \right\rangle_{\Gamma^h(t)}^h
- \left\langle\rho^h_{\Gamma,\star} , (\vec{\mathcal{V}}^h - \vec U^h) \,.\, 
\nabs\,\zeta \right\rangle_{\Gamma^h(t)}^h
\quad\forall\ \zeta \in W_T(\GhT)\,,
\label{eq:sdHGa0}\\
&\tfrac12 \left[ \ddt \left( \rho^h\,\vec U^h , \vec \xi \right)
+ \left( \rho^h\,\vec U^h_t , \vec \xi \right)
- (\rho^h\,\vec U^h, \vec \xi_t) \right]
+ 2\left(\mu^h\,\mat D(\vec U^h), \mat D(\vec \xi) \right)
\nonumber \\ & \quad
+ \tfrac12\left(\rho^h, 
 [(\vec I^h_2\,\vec U^h\,.\,\nabla)\,\vec U^h]\,.\,\vec \xi
- [(\vec I^h_2\,\vec U^h\,.\,\nabla)\,\vec \xi]\,.\,\vec U^h \right)
- \left(P^h, \nabla\,.\,\vec \xi\right) 
+ \ddt\left\langle \rho_\Gamma^h\,\vec U^h, \vec\xi
\right\rangle_{\Gamma^h(t)}^h
\nonumber \\ & \quad
+ 2 \left\langle \mu_\Gamma(\Psi^h)\, \mat D_s^h (\vec\pi^h\,\vec U^h) , 
\mat D_s^h (\vec\pi^h\, \vec \xi ) \right\rangle_{\Gamma^h(t)}^h
\nonumber \\ & \quad
+  \left\langle \lambda_\Gamma(\Psi^h) \,
\nabs\,.\, (\vec\pi^h\,\vec U^h) , \nabs\,.\, 
(\vec\pi^h\,\vec \xi) \right\rangle_{\Gamma^h(t)}^h
\nonumber \\ & \quad
- \left\langle \pi^h\,[\gamma_\epsilon(\Psi^h)\,\kappa^h]\,\vec\nu^h,
\vec \xi \right\rangle_{\Gamma^h(t)}
- \left\langle \nabs\,[\pi^h\,\gamma_\epsilon(\Psi^h)],
\vec \xi \right\rangle_{\Gamma^h(t)}^h
\nonumber \\ & \quad\ 
= \left(\rho^h\,\vec f^h_1 + \vec f^h_2, \vec \xi\right)
+ \left\langle \rho_\Gamma^h \,\vec U^h, \matpartxh\,(\vec\pi^h\,\vec \xi) 
\right\rangle_{\Gamma^h(t)}^h
\nonumber \\ & \quad\quad 
- \sum_{i=1}^d
 \left\langle\rho_{\Gamma,\star}^h\,( \vec{\mathcal{V}}^h - \vec U^h) ,  
 \mat\Xi^h(\pi^h\,U^h_i)\, \nabs\,(\pi^h\,\xi_i) \right\rangle_{\Gamma^h(t)}^h
\qquad \forall\ \vec\xi \in H^1(0,T;\uspace^h)
\,, \label{eq:sdHGa}\\
& \left(\nabla\,.\,\vec U^h, \varphi\right)  = 0 
\quad \forall\ \varphi \in \widehat\pspace^h(t)\,,
\label{eq:sdHGb} \\
& \left\langle \vec{\mathcal{V}}^h ,
\chi\,\vec\nu^h \right\rangle_{\Gamma^h(t)}^h
= \left\langle \vec U^h, 
\chi\,\vec\nu^h \right\rangle_{\Gamma^h(t)} 
 \quad\forall\ \chi \in \Wht\,,
\label{eq:sdHGc} \\
& \left\langle \kappa^h\,\vec\nu^h, \vec\eta \right\rangle_{\Gamma^h(t)}^h
+ \left\langle \nabs\,\vec\id, \nabs\,\vec \eta \right\rangle_{\Gamma^h(t)}
 = 0  \quad\forall\ \vec\eta \in \Vht\,, \label{eq:sdHGd} \\
& \ddt
\left\langle \Psi^h, \chi \right\rangle_{\Gamma^h(t)}^h 
+ \Ds\left\langle \nabs\, \Psi^h, \nabs\, \chi
\right\rangle_{\Gamma^h(t)}
\nonumber \\ & \hspace{1cm}
= \left\langle \Psi^h, \matpartxh\, \chi \right\rangle_{\Gamma^h(t)}^h 
- \left\langle \Psi^h_{\star,\epsilon}, \left( \vec{\mathcal{V}}^h - \vec U^h \right) 
.\, \nabs\,\chi \right\rangle_{\Gamma^h(t)}^h
\qquad \forall\ \chi \in W_T(\GhT)\,,\label{eq:sdHGe}
\end{align}
\end{subequations}
where we recall (\ref{eq:Xht}), and where e.g.\ 
$\vec U^h = (U^h_1,\ldots,U^h_d)^T$. 
The value $\Psi^h_{\star,\epsilon}$ in (\ref{eq:sdHGe}) is chosen in a special
way to enable us to prove stability for the scheme (\ref{eq:sdHGa0}--f). As we
are unable to prove stability for $d=3$ for general surface tensions,
due to the need for (\ref{eq:JWB}), we simply set 
$\Psi^h_{\star,\epsilon} = \Psi^h$ if $d=3$. For $d=2$, on recalling 
(\ref{eq:Feps}), we define
\begin{equation} \label{eq:Psihstar}
\Psi^h_{\star,\epsilon} = \begin{cases}
- \frac{\gamma_\epsilon(\Psi^h_k) - \gamma_\epsilon(\Psi^h_{k-1})}
{F'_\epsilon(\Psi^h_k) - F'_\epsilon(\Psi^h_{k-1})} & 
F'_\epsilon(\Psi^h_{k-1}) \not= F'_\epsilon(\Psi^h_k)\,, \\
\frac12\,(\Psi^h_{k-1} + \Psi^h_k) 
&  F'_\epsilon(\Psi^h_{k-1}) = F'_\epsilon(\Psi^h_k)\,,
\end{cases}
\quad\text{on}\quad [\vec q^h_{k-1}, \vec q^h_{k}]
\quad\forall\ k \in \{1,\ldots,K_\Gamma\} \,.
\end{equation}
Here we have introduced the shorthand notation
$\Psi^h_k(t) = \Psi^h(\vec q^h_k(t), t)$, for $k=1,\ldots,K_\Gamma$,
and for notational convenience we have
dropped the dependence on $t$ in (\ref{eq:Psihstar}). 
The definition in (\ref{eq:Psihstar}) is chosen such that for $d=2$
it holds that
\begin{align} \label{eq:doctored}
& \left\langle \Psi^h_{\star,\epsilon}\, \vec\eta ,
\nabs\,\pi^h\,[F'_\epsilon(\Psi^h)] \right\rangle_{\Gamma^h(t)}^h
= \left\langle \Psi^h_{\star,\epsilon}\, \vec\eta ,
\nabs\,\pi^h\,[F'_\epsilon(\Psi^h)] \right\rangle_{\Gamma^h(t)}
= - \left\langle \vec\eta, \nabs\,\pi^h\,[\gamma_\epsilon(\Psi^h)]
\right\rangle_{\Gamma^h(t)} \nonumber\\ & \hspace{11cm}\forall\ \vec\eta \in \Vht\,,
\end{align}
which will be crucial for the stability proof for (\ref{eq:sdHGa0}--f). 
Note that here the regularization (\ref{eq:Freg},b) is required in order to 
make the definition (\ref{eq:Psihstar}) well-defined. 
We observe that (\ref{eq:doctored}) for 
$\vec\eta = \vec{\mathcal{V}}^h - \vec\pi^h\,\vec U^h \!\mid_{\Gamma^h(t)}$ 
mimics (\ref{eq:dbgn}) on the discrete level.
In addition $\rho^h_{\Gamma,\star}$ in (\ref{eq:sdHGa0},b) is defined by
\begin{equation} \label{eq:rhohstar}
\rho^h_{\Gamma,\star} = 
\begin{cases}
\frac1{\mathcal{H}^{d-1}(\sigma^h_j)}\,
\int_{\sigma^h_j} \rho^h_\Gamma  \dH{d-1} & \rho^h_\Gamma \geq 0\
\text{on}\ \overline{\sigma^h_j}\,,\\
0 & \min_{\overline{\sigma^h_j}} \rho^h_\Gamma < 0\,,
\end{cases}
\quad\text{on}\quad \sigma^h_j
\quad\forall\ j \in \{1,\ldots,J_\Gamma\} \,.
\end{equation}

In the following lemma we derive a discrete analogue of (\ref{eq:d1}),
as well as a discrete surface mass conservation property, for the
scheme (\ref{eq:sdHGa0}--f). 
\begin{thm} \label{lem:stabHG}
Let $\{(\Gamma^h, \rho^h_\Gamma,\vec U^h, P^h, \kappa^h, \Psi^h)(t)
\}_{t\in[0,T]}$ 
be a solution to {\rm (\ref{eq:sdHGa0}--f)}. Then
\begin{align} \label{eq:lemHG}
& \tfrac12\,\ddt \left(\|[\rho^h]^\frac12\,\vec U^h \|_0^2 +
\left\langle \rho_\Gamma^h\,\vec U^h, \vec U^h \right\rangle_{\Gamma^h(t)}^h
\right)
+ 2\,\| [\mu^h]^\frac12\,\mat D(\vec U^h)\|_0^2 
\nonumber \\ & \hspace{1cm}
+ 2 \left\langle\mu_\Gamma(\Psi^h)\, \mat {\hat D}_s^h (\vec\pi^h\,\vec U^h) , 
\mat {\hat D}_s^h (\vec\pi^h\, \vec U^h ) \right\rangle_{\Gamma^h(t)}^h
\nonumber \\ & \hspace{1cm}
+  \left\langle (\lambda_\Gamma(\Psi^h) + \tfrac2{d-1}\,\mu_\Gamma(\Psi^h))\,
\nabs\,.\, (\vec\pi^h\,\vec U^h) , \nabs\,.\, 
(\vec\pi^h\,\vec U^h) \right\rangle_{\Gamma^h(t)}^h
\nonumber \\ & \hspace{1.5cm}
= \left(\rho^h\,\vec f_1^h + 
\vec f_2^h, \vec U^h \right) 
+ \left\langle \pi^h\,[\gamma_\epsilon(\Psi^h) \,\kappa^h]\,\vec\nu^h, \vec U^h
\right\rangle_{\Gamma^h(t)} + \left\langle
\nabs\,\pi^h\,[\gamma_\epsilon(\Psi^h)] , \vec U^h 
\right\rangle_{\Gamma^h(t)}^h .
\end{align}
In addition, it holds that 
\begin{equation} \label{eq:totalrhohHG}
\ddt \left\langle \rho_\Gamma^h, 1 \right\rangle_{\Gamma^h(t)} = 0
\end{equation}
and, if
\begin{equation} \label{eq:XinftyHG}
\max_{0\leq t \leq T} \| \nabs\,.\,\vec{\mathcal{V}}^h \|_{\infty,\Gamma^h(t)} 
< \infty\,,
\end{equation}
then
\begin{equation} \label{eq:sddmprhoHG}
\rho_\Gamma^h(\cdot,t) \geq 0 
\quad \forall\ t \in (0,T]
 \qquad \text{if}\quad \rho_\Gamma^h(\cdot,0) 
\geq 0 \,.
\end{equation}
\end{thm}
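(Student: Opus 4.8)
The plan is to establish the three assertions in turn, following the proof of Theorem~\ref{lem:stabGD} but keeping track of the additional transport terms that distinguish (\ref{eq:sdHGa0}--f) from (\ref{eq:sdGDa0}--f).

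\emph{The energy identity (\ref{eq:lemHG}).} I would choose $\vec\xi = \vec U^h$ in (\ref{eq:sdHGa}), $\varphi = P^h$ in (\ref{eq:sdHGb}), and $\zeta = -\tfrac12\,\pi^h\,[|\vec U^h\!\mid_{\Gamma^h(t)}|^2] \in W_T(\GhT)$ in (\ref{eq:sdHGa0}), and add the three. On the momentum side the skew-symmetric convective bracket vanishes, the pressure term $-(P^h,\nabla\,.\,\vec U^h)$ is killed by (\ref{eq:sdHGb}), the first bracket collapses to $\tfrac12\,\ddt\,\|[\rho^h]^{\frac12}\vec U^h\|_0^2$ by symmetry, and the surface-viscosity contributions are recast in deviatoric form by (\ref{eq:HGh}); the surface-tension terms pass unchanged to the right-hand side. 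The delicate point is the surface-material-density kinetic energy, which (\ref{eq:sdHGa}) produces as the \emph{full} derivative $\ddt\,\langle\rho_\Gamma^h\,\vec U^h,\vec U^h\rangle^h_{\Gamma^h(t)}$, whereas (\ref{eq:lemHG}) asks for only one half of it.

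The missing half is supplied by the density equation. Testing (\ref{eq:sdHGa0}) with $\zeta = -\tfrac12\,\pi^h[|\vec U^h|^2]$, recalling $\langle\rho_\Gamma^h,\pi^h[|\vec U^h|^2]\rangle^h_{\Gamma^h(t)} = \langle\rho_\Gamma^h\,\vec U^h,\vec U^h\rangle^h_{\Gamma^h(t)}$ (mass lumping uses only nodal values) and invoking (\ref{eq:phU2}) yields
\begin{equation*}
\tfrac12\,\ddt\,\langle\rho_\Gamma^h\,\vec U^h,\vec U^h\rangle^h_{\Gamma^h(t)}
= \langle\rho_\Gamma^h\,\vec U^h,\matpartxh\,\vec\pi^h\,\vec U^h\rangle^h_{\Gamma^h(t)}
- \tfrac12\,\langle\rho^h_{\Gamma,\star},(\vec{\mathcal{V}}^h-\vec U^h)\,.\,\nabs\,\pi^h[|\vec U^h|^2]\rangle^h_{\Gamma^h(t)}\,.
\end{equation*}
On the right-hand side of (\ref{eq:sdHGa}) the material-derivative term is exactly the first term here, while, by (\ref{eq:Xih}) together with the agreement of nodal interpolants, the final $\mat\Xi^h$-sum collapses to $-\tfrac12\,\langle\rho^h_{\Gamma,\star},(\vec{\mathcal{V}}^h-\vec U^h)\,.\,\nabs\,\pi^h[|\vec U^h|^2]\rangle^h_{\Gamma^h(t)}$, the discrete analogue of (\ref{eq:nbgn}). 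Substituting the displayed identity, these two $\rho^h_{\Gamma,\star}$-contributions cancel and the net surface-density contribution is precisely $\tfrac12\,\ddt\,\langle\rho_\Gamma^h\,\vec U^h,\vec U^h\rangle^h_{\Gamma^h(t)}$, giving (\ref{eq:lemHG}). This cancellation, engineered into the scheme through (\ref{eq:Xih}) and (\ref{eq:rhohstar}), is the main obstacle of the proof.

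\emph{Conservation (\ref{eq:totalrhohHG}) and non-negativity (\ref{eq:sddmprhoHG}).} For (\ref{eq:totalrhohHG}) I take $\zeta = 1$ in (\ref{eq:sdHGa0}): then $\matpartxh\,1 = 0$ and $\nabs\,1 = \vec0$ annihilate the right-hand side, and $\langle\rho_\Gamma^h,1\rangle^h_{\Gamma^h(t)} = \langle\rho_\Gamma^h,1\rangle_{\Gamma^h(t)}$ for piecewise linear $\rho_\Gamma^h$. For (\ref{eq:sddmprhoHG}) I would test (\ref{eq:sdHGa0}) with the nodal basis function $\zeta = \chi_k^h$ and use (\ref{eq:mpbf}) to obtain the scalar evolution law $\ddt[\,m_k(t)\,\rho_\Gamma^h(\vec q_k^h(t),t)] = -\langle\rho^h_{\Gamma,\star},(\vec{\mathcal{V}}^h-\vec U^h)\,.\,\nabs\,\chi_k^h\rangle^h_{\Gamma^h(t)}$, where $m_k(t) = \langle1,\chi_k^h\rangle^h_{\Gamma^h(t)} > 0$. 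By construction (\ref{eq:rhohstar}) switches $\rho^h_{\Gamma,\star}$ off on every simplex on which $\rho_\Gamma^h$ takes a negative value, so that as long as all nodal values remain non-negative the resulting system is a transport system whose coefficients are bounded, the bound being supplied by (\ref{eq:XinftyHG}). A discrete maximum-principle argument, exactly as in \citet[Theorem~3.3]{tpfs}, then propagates $\rho_\Gamma^h(\cdot,0)\geq0$ to $\rho_\Gamma^h(\cdot,t)\geq0$ for all $t\in(0,T]$.
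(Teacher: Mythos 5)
Your treatment of the energy identity (\ref{eq:lemHG}) and of the conservation property (\ref{eq:totalrhohHG}) is correct and is essentially the paper's own argument: the same test functions $\vec\xi=\vec U^h$, $\varphi=P^h$, $\zeta=-\tfrac12\,\pi^h[|\vec U^h|^2]$ and $\zeta=1$, with (\ref{eq:phU2}), (\ref{eq:Xih}) and (\ref{eq:HGh}) doing the work; your explicit verification that the $\mat\Xi^h$-sum on the right hand side of (\ref{eq:sdHGa}) cancels against the $\rho^h_{\Gamma,\star}$-transport term produced by (\ref{eq:sdHGa0}) is exactly the cancellation the paper relies on (its discrete version of (\ref{eq:nbgn})).

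The non-negativity claim (\ref{eq:sddmprhoHG}) is where your argument has a genuine gap. Testing with $\chi^h_k$ gives the nodal law $\ddt\bigl[m_k(t)\,\rho^h_\Gamma(\vec q^h_k(t),t)\bigr]=-\langle\rho^h_{\Gamma,\star},(\vec{\mathcal{V}}^h-\vec U^h)\,.\,\nabs\,\chi^h_k\rangle^h_{\Gamma^h(t)}$, but a maximum principle does not close from here: at the first time a nodal value reaches zero, the neighbouring simplices still carry $\rho^h_{\Gamma,\star}>0$ and the sign of $(\vec{\mathcal{V}}^h-\vec U^h)\,.\,\nabs\,\chi^h_k$ is arbitrary, so the right hand side can be strictly negative and the nodal value is driven below zero. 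Note also that (\ref{eq:XinftyHG}) bounds $\nabs\,.\,\vec{\mathcal{V}}^h$, which enters through the time derivative of the lumped inner product via (\ref{eq:DElem5.6NI}); it does not bound the transport coefficients $(\vec{\mathcal{V}}^h-\vec U^h)\,.\,\nabs\,\chi^h_k$ you appeal to. Moreover the cited result \citet[Theorem~3.3]{tpfs} concerns the scheme without the $(\vec{\mathcal{V}}^h-\vec U^h)$-transport term, so it does not apply ``exactly'' here.

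The correct argument, which the paper gives, is a Gronwall estimate on the negative part rather than a pointwise principle: choose $\zeta=\pi^h[\rho^h_\Gamma]_-$ in (\ref{eq:sdHGa0}). The observation you already made about (\ref{eq:rhohstar}) is then used at exactly this point: on any simplex where $\nabs\,\pi^h[\rho^h_\Gamma]_-\neq\vec0$ one has $\min_{\overline{\sigma^h_j}}\rho^h_\Gamma<0$ and hence $\rho^h_{\Gamma,\star}=0$, so the transport term vanishes identically. Using (\ref{eq:p1}) and (\ref{eq:DElem5.6NI}) the remaining terms give
\begin{equation*}
\ddt \left\langle [\rho^h_\Gamma]_-^2, 1 \right\rangle_{\Gamma^h(t)}^h
= - \left\langle [\rho_\Gamma^h]_-^2, \nabs\,.\, \vec{\mathcal{V}}^h
\right\rangle_{\Gamma^h(t)}^h
\leq \| \nabs\,.\,\vec{\mathcal{V}}^h \|_{\infty,\Gamma^h(t)}
\left\langle [\rho^h_\Gamma]_-^2 , 1 \right\rangle_{\Gamma^h(t)}^h\,,
\end{equation*}
and a Gronwall inequality together with (\ref{eq:XinftyHG}) and $[\rho^h_\Gamma(\cdot,0)]_-=0$ yields (\ref{eq:sddmprhoHG}). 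This is also how (\ref{eq:XinftyHG}) actually enters the proof.
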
 
\begin{proof}
On recalling (\ref{eq:HGh}), 
the desired result (\ref{eq:lemHG}) follows on choosing
$\vec \xi = \vec U^h$ in (\ref{eq:sdHGa}), $\varphi = P^h$ in (\ref{eq:sdHGb})
and $\zeta \in W_T(\GT)$ with 
$\zeta(\cdot,t) = -\frac12\,\pi^h\,[|\vec U^h\!\mid_{\Gamma^h(t)}|^2]$ 
for all $t\in[0,T]$, recall $\vec U^h \in \utimespace^h_{\Gamma^h}$,
in (\ref{eq:sdHGa0}),
where we recall (\ref{eq:phU2}) and (\ref{eq:Xih}). 

The conservation property (\ref{eq:totalrhohHG}) follows from
choosing $\zeta = 1$ in (\ref{eq:sdHGa0}). Moreover, choosing 
$\zeta = \pi^h [ \rho_\Gamma^h]_{-}$ in (\ref{eq:sdHGa0}) yields,
on recalling (\ref{eq:p1}) and (\ref{eq:DElem5.6NI}), that
\begin{align}
& \ddt \left\langle [\rho^h_\Gamma]_-^2, 1 \right\rangle_{\Gamma^h(t)}^h 
+ \left\langle\rho^h_{\Gamma,\star} , (\vec{\mathcal{V}}^h - \vec U^h) \,.\, 
\nabs\,\pi^h\,[\rho^h_\Gamma]_- \right\rangle_{\Gamma^h(t)}^h
= \left\langle \rho_\Gamma^h, 
\matpartxh\, \pi^h\,[\rho^h_\Gamma]_- \right\rangle_{\Gamma^h(t)}^h 
\nonumber \\ & \qquad
= \tfrac12
\left\langle \matpartxh\,\pi^h\left[[\rho_\Gamma^h]_-^2\right], 1
\right\rangle_{\Gamma^h(t)}^h  
= \tfrac12\,\ddt 
\left\langle [\rho^h_\Gamma]_-^2, 1 \right\rangle_{\Gamma^h(t)}^h 
- \tfrac12 \left\langle [\rho_\Gamma^h]_-^2, \nabs\,.\, \vec{\mathcal{V}}^h
\right\rangle_{\Gamma^h(t)}^h .
\label{eq:rhonew1}
\end{align}
It follows from (\ref{eq:rhohstar}) that the second term on the left hand side
of (\ref{eq:rhonew1}) vanishes, and hence we obtain that
\begin{equation} \label{eq:rhonew2}
\ddt \left\langle [\rho^h_\Gamma]_-^2, 1 \right\rangle_{\Gamma^h(t)}^h 
= - \left\langle [\rho_\Gamma^h]_-^2, \nabs\,.\, \vec{\mathcal{V}}^h
\right\rangle_{\Gamma^h(t)}^h
\leq \| \nabs\,.\,\vec{\mathcal{V}}^h \|_{\infty,\Gamma^h(t)}
\left\langle [\rho^h_\Gamma]_-^2 , 1 \right\rangle_{\Gamma^h(t)}^h\,.
\end{equation}
A Gronwall inequality, together with (\ref{eq:XinftyHG}),
now yields our desired result (\ref{eq:sddmprhoHG}). 
\end{proof}

In the following two theorems we derive discrete analogues of (\ref{eq:d5}) 
for the scheme (\ref{eq:sdHGa0}--f). First we consider the case of constant
surface tension, recall (\ref{eq:Fconst}). 

\begin{thm} \label{thm:stabHGconst}
Let $\gamma$ be defined as in {\rm (\ref{eq:Fconst})}, let 
{\rm (\ref{eq:muconst})} hold and let \linebreak
$\{(\Gamma^h, \rho_\Gamma^h, \vec U^h, P^h, \kappa^h)(t)\}_{t\in[0,T]}$ 
be a solution to {\rm (\ref{eq:sdHGa0}--e)}. Then
it holds that 
\begin{align} \label{eq:stabHGconst}
& \ddt\left(\tfrac12\,\|[\rho^h]^\frac12\,\vec U^h\|^2_{0} 
+ \tfrac12
 \left\langle \rho_\Gamma^h\,\vec U^h, \vec U^h \right\rangle_{\Gamma^h(t)}^h
+ \overline\gamma \, \mathcal{H}^{d-1}(\Gamma^h(t)) \right)
+ 2\,\| [\mu^h]^\frac12\,\mat D(\vec U^h)\|_0^2 
\nonumber \\ & \hspace{5mm}
+ 2\,\overline\mu_\Gamma \left\langle \mat {\hat D}_s^h (\vec\pi^h\,\vec U^h) , 
\mat {\hat D}_s^h (\vec\pi^h\, \vec U^h ) \right\rangle_{\Gamma^h(t)}
+ (\overline\lambda_\Gamma + \tfrac2{d-1}\,\overline\mu_\Gamma )
\left\langle \nabs\,.\, (\vec\pi^h\,\vec U^h) , \nabs\,.\, 
(\vec\pi^h\,\vec U^h) \right\rangle_{\Gamma^h(t)}
\nonumber \\ & \hspace{4cm}
= (\rho^h\,\vec f_1^h + \vec f_2^h, \vec U^h) \,.
\end{align}
\end{thm}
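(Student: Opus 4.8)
The plan is to mirror the proof of Theorem~\ref{thm:stabGDconst}, taking as the starting point the energy relation (\ref{eq:lemHG}) already established in Theorem~\ref{lem:stabHG}, and then treating the two surface-tension contributions on its right-hand side in the constant case. First I would record that when $\gamma \equiv \overline\gamma$ as in (\ref{eq:Fconst}) we have $F'' \equiv 0$, so that (\ref{eq:geps}) gives $\gamma_\epsilon \equiv \overline\gamma$ as well. Hence $\pi^h\,[\gamma_\epsilon(\Psi^h)] = \overline\gamma$ is constant on $\Gamma^h(t)$ and the term $\langle \nabs\,\pi^h\,[\gamma_\epsilon(\Psi^h)], \vec U^h\rangle^h_{\Gamma^h(t)}$ in (\ref{eq:lemHG}) vanishes, while $\pi^h\,[\gamma_\epsilon(\Psi^h)\,\kappa^h] = \overline\gamma\,\kappa^h$, so the remaining surface-tension term reduces to $\overline\gamma\,\langle \kappa^h\,\vec\nu^h, \vec U^h\rangle_{\Gamma^h(t)}$.

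The crux is then to establish the discrete analogue of (\ref{eq:constgam})/(\ref{eq:constgamGD}), namely that $\overline\gamma\,\langle \kappa^h\,\vec\nu^h, \vec U^h\rangle_{\Gamma^h(t)} = -\,\overline\gamma\,\ddt\,\mathcal{H}^{d-1}(\Gamma^h(t))$. Here I would exploit (\ref{eq:sdHGc}) with the scalar test function $\chi = \overline\gamma\,\kappa^h \in \Wht$, which converts the exact inner product against $\vec U^h$ into the mass-lumped inner product against $\vec{\mathcal{V}}^h$, giving $\overline\gamma\,\langle \kappa^h\,\vec\nu^h, \vec U^h\rangle_{\Gamma^h(t)} = \overline\gamma\,\langle \kappa^h\,\vec\nu^h, \vec{\mathcal{V}}^h\rangle^h_{\Gamma^h(t)}$. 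Next I would insert $\vec\eta = \vec{\mathcal{V}}^h \in \Vht$ into the discrete curvature equation (\ref{eq:sdHGd}) to obtain $\overline\gamma\,\langle \kappa^h\,\vec\nu^h, \vec{\mathcal{V}}^h\rangle^h_{\Gamma^h(t)} = -\,\overline\gamma\,\langle \nabs\,\vec\id, \nabs\,\vec{\mathcal{V}}^h\rangle_{\Gamma^h(t)}$, and finally reduce $\langle \nabs\,\vec\id, \nabs\,\vec{\mathcal{V}}^h\rangle_{\Gamma^h(t)} = \langle 1, \nabs\,.\,\vec{\mathcal{V}}^h\rangle_{\Gamma^h(t)} = \ddt\,\mathcal{H}^{d-1}(\Gamma^h(t))$ exactly as in (\ref{eq:constgam}), the last identity following from (\ref{eq:DEeq5.28}) with $\zeta = 1$ (so that $\matpartxh\,1 = 0$), summed over the simplices $\sigma^h_j(t)$.

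Before combining, I would reconcile the surface-viscosity terms: in (\ref{eq:lemHG}) these appear with the mass-lumped product $\langle\cdot,\cdot\rangle^h_{\Gamma^h(t)}$ and the variable coefficients $\mu_\Gamma(\Psi^h)$, $\lambda_\Gamma(\Psi^h)$, whereas (\ref{eq:stabHGconst}) states them with the exact product and the constants $\overline\mu_\Gamma$, $\overline\lambda_\Gamma$. Under (\ref{eq:muconst}) the coefficients are constant, and since $\vec\pi^h\,\vec U^h$ is piecewise linear, the integrands $\mat{\hat D}^h_s(\vec\pi^h\,\vec U^h)$ and $\nabs\,.\,(\vec\pi^h\,\vec U^h)$ are piecewise constant on $\Gamma^h(t)$; for products of piecewise constant functions the quadrature (\ref{eq:defNI}) is exact, so the lumped and exact products coincide and the two forms agree. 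With this in hand, substituting the displayed curvature identity into (\ref{eq:lemHG}), dropping the now-vanishing gradient term, and moving $\overline\gamma\,\ddt\,\mathcal{H}^{d-1}(\Gamma^h(t))$ to the left-hand side yields exactly (\ref{eq:stabHGconst}).

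I anticipate the main obstacle to be the careful bookkeeping of the mass-lumped versus exact $L^2$ inner products: the identity (\ref{eq:sdHGc}) is deliberately posed with the lumped product on the $\vec{\mathcal{V}}^h$ side but the exact product on the $\vec U^h$ side, and it is precisely this asymmetry that allows the exact-product curvature term coming from (\ref{eq:lemHG}) to be matched with the lumped-product form appearing in (\ref{eq:sdHGd}). Getting this matching right, together with the piecewise-constant observation that renders the surface-viscosity terms lump-exact, is the only nontrivial point; the remainder is a direct transcription of the constant-$\gamma$ argument for the first scheme.
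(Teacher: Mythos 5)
Your argument is correct and coincides with the paper's own proof: both reduce the surface-tension term via (\ref{eq:sdHGc}) with $\chi=\overline\gamma\,\kappa^h$, then (\ref{eq:sdHGd}) with $\vec\eta=\vec{\mathcal{V}}^h$, and the transport identity (\ref{eq:DElem5.6})/(\ref{eq:DEeq5.28}) to obtain $-\overline\gamma\,\ddt\,\mathcal{H}^{d-1}(\Gamma^h(t))$, before combining with (\ref{eq:lemHG}). Your additional observation that the mass-lumped and exact inner products coincide on the piecewise-constant surface-viscosity integrands is a correct detail that the paper leaves implicit.
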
 
\begin{proof}
Similarly to (\ref{eq:constgamGD}), it follows from 
$\gamma_\epsilon(\cdot) = \gamma(\cdot) =\overline\gamma$, (\ref{eq:sdHGc},e) and
(\ref{eq:DElem5.6}) that
\begin{align}
\overline\gamma \left\langle \kappa^h\,\vec\nu^h,\vec U^h \right\rangle_{\Gamma^h(t)}
&= \overline\gamma \left\langle \kappa^h\,\vec\nu^h, \vec{\mathcal{V}}^h 
\right\rangle_{\Gamma^h(t)}^h
= - \overline\gamma \left\langle \nabs\,\vec\id, \nabs\,\vec{\cal V}^h 
\right\rangle_{\Gamma^h(t)} \nonumber \\ &
=- \overline\gamma \left\langle 1, \nabs \,.\,\vec{\cal V}^h 
\right\rangle_{\Gamma^h(t)} 
= - \overline\gamma\, \ddt\,{\cal H}^{d-1}(\Gamma^h(t))\,.
\label{eq:constgamHG}
\end{align}  
Combining (\ref{eq:constgamHG}) and (\ref{eq:lemHG}) for the special case
(\ref{eq:Fconst}) yields the desired result (\ref{eq:stabHGconst}). 
\end{proof}

Next we generalize the results from Theorem~\ref{thm:stabHGconst} to the case
of a general surface tension function $\gamma$ as introduced in 
(\ref{eq:gammaprime}). 
\begin{thm} \label{thm:stabHG}
Let $\{(\Gamma^h, \rho^h_\Gamma, \vec U^h, P^h, \kappa^h, \Psi^h)(t)
\}_{t\in[0,T]}$ be a solution to {\rm (\ref{eq:sdHGa0}--f)}. Then
\begin{equation} \label{eq:totalPsih}
\ddt \left\langle \Psi^h, 1 \right\rangle_{\Gamma^h(t)} = 0\,.
\end{equation}
Moreover, if $\charfcn{\Omega_-^h(t)} \in \pspace^h(t)$ then 
\begin{equation}
\ddt\, \vol(\Omega_-^h(t)) = 0\,. \label{eq:cons}
\end{equation}
In addition, if $d=2$ and if the assumption {\rm (\ref{eq:assPsih})} holds,
then
\begin{align}
& \ddt\left(\tfrac12\,\|[\rho^h]^\frac12\,\vec U^h\|^2_{0} 
+ \tfrac12
 \left\langle \rho_\Gamma^h\,\vec U^h, \vec U^h \right\rangle_{\Gamma^h(t)}^h
+ \left\langle F_\epsilon(\Psi^h) , 1 \right\rangle_{\Gamma^h(t)}^h \right) 
+ 2\,\|[\mu^h]^\frac12\,\mat D(\vec U^h)\|^2_{0}
\nonumber \\ & \hspace{5mm}
+ 2 \left\langle\mu_\Gamma(\Psi^h)\, \mat {\hat D}_s^h (\vec\pi^h\,\vec U^h) , 
\mat {\hat D}_s^h (\vec\pi^h\, \vec U^h ) \right\rangle_{\Gamma^h(t)}^h
\nonumber \\ & \hspace{5mm}
+  \left\langle (\lambda_\Gamma(\Psi^h) + \tfrac2{d-1}\,\mu_\Gamma(\Psi^h)) \,
\nabs\,.\, (\vec\pi^h\,\vec U^h) , \nabs\,.\, 
(\vec\pi^h\,\vec U^h) \right\rangle_{\Gamma^h(t)}^h
\leq \left(\rho^h\,\vec f^h_1 + \vec f^h_2, \vec U^h\right) .
\label{eq:stabHG}
\end{align}
\end{thm}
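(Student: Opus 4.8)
The plan is to reproduce the formal continuous chain (\ref{eq:d1})--(\ref{eq:d5}) at the discrete level, following the techniques of \cite{tpfs} but carrying along the extra $\vec{\mathcal{V}}^h - \vec U^h$ transport terms and the surface-density contributions. The two conservation properties come first and are essentially free. Choosing $\chi = 1$ in (\ref{eq:sdHGe}) annihilates the diffusion, material-derivative and transport terms, since $\nabs\,1 = 0$ and $\matpartxh\,1 = 0$, which gives (\ref{eq:totalPsih}) once one notes that mass lumping is exact on piecewise linear integrands, so that $\langle\Psi^h,1\rangle^h_{\Gamma^h(t)} = \langle\Psi^h,1\rangle_{\Gamma^h(t)}$. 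For (\ref{eq:cons}) I would start from the transport identity $\ddt\,\vol(\Omega_-^h(t)) = \langle\vec{\mathcal{V}}^h,\vec\nu^h\rangle_{\Gamma^h(t)}$, use (\ref{eq:sdHGc}) with $\chi = 1$ to replace $\vec{\mathcal{V}}^h$ by $\vec U^h$, and then apply the divergence theorem together with (\ref{eq:sdHGb}) tested with the mean-corrected $\charfcn{\Omega_-^h(t)}$ and the boundary conditions built into $\uspace^h$.

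The heart of the matter is (\ref{eq:stabHG}), which I would obtain by adding to the energy identity (\ref{eq:lemHG}) of Theorem~\ref{lem:stabHG} a discrete surfactant-energy relation that exactly cancels its two surface-tension terms $\langle\pi^h[\gamma_\epsilon(\Psi^h)\kappa^h]\vec\nu^h,\vec U^h\rangle_{\Gamma^h(t)}$ and $\langle\nabs\,\pi^h[\gamma_\epsilon(\Psi^h)],\vec U^h\rangle^h_{\Gamma^h(t)}$. To generate this relation I would test (\ref{eq:sdHGe}) with $\chi = \pi^h[F_\epsilon'(\Psi^h)]$, which is admissible precisely because (\ref{eq:assPsih}) ensures $\Psi^h < \psi_\infty$ while $F_\epsilon \in C^2(-\infty,\psi_\infty)$; note that, in contrast to the nonnegativity hypothesis needed in Theorem~\ref{thm:stabGD}, no sign condition on $\Psi^h$ is required here, thanks to the regularisation (\ref{eq:Freg},b). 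Using the discrete transport formula (\ref{eq:DElem5.6NI}) and the nodal chain rule (\ref{eq:p1}), the material-derivative terms reorganise into $\ddt\,\langle F_\epsilon(\Psi^h),1\rangle^h_{\Gamma^h(t)}$ together with a contribution that, by the identity $\gamma_\epsilon = F_\epsilon - r\,F_\epsilon'$ in (\ref{eq:Feps}), equals $-\langle\pi^h[\gamma_\epsilon(\Psi^h)],\nabs\,.\,\vec{\mathcal{V}}^h\rangle$. The transport term on the right of (\ref{eq:sdHGe}) is converted, through the tailored choice (\ref{eq:Psihstar}) and the key identity (\ref{eq:doctored}) applied with $\vec\eta = \vec{\mathcal{V}}^h - \vec\pi^h\,\vec U^h$, into $\langle\vec{\mathcal{V}}^h - \vec\pi^h\,\vec U^h,\nabs\,\pi^h[\gamma_\epsilon(\Psi^h)]\rangle$.

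It remains to rewrite $\langle\pi^h[\gamma_\epsilon(\Psi^h)],\nabs\,.\,\vec{\mathcal{V}}^h\rangle$, and this is exactly where $d=2$ enters decisively. Applying the discrete integration-by-parts identity (\ref{eq:JWB}) with $\zeta = \pi^h[\gamma_\epsilon(\Psi^h)]$ and $\vec\eta = \vec{\mathcal{V}}^h$, followed by the curvature relation (\ref{eq:sdHGd}) tested with $\vec\eta = \vec\pi^h(\zeta\,\vec{\mathcal{V}}^h)$, turns it into $-\langle\nabs\,\pi^h[\gamma_\epsilon(\Psi^h)],\vec{\mathcal{V}}^h\rangle - \langle\pi^h[\gamma_\epsilon(\Psi^h)\kappa^h]\vec\nu^h,\vec{\mathcal{V}}^h\rangle^h$. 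Collecting everything, the two $\nabs$-terms in $\vec{\mathcal{V}}^h$ cancel, leaving $-\langle\nabs\,\pi^h[\gamma_\epsilon(\Psi^h)],\vec\pi^h\,\vec U^h\rangle$, which equals the mass-lumped term in (\ref{eq:lemHG}) by exactness of lumping for the piecewise-constant-times-piecewise-linear product; meanwhile (\ref{eq:sdHGc}) with $\chi = \pi^h[\gamma_\epsilon(\Psi^h)\kappa^h]$ converts the curvature term into the exact pairing $-\langle\pi^h[\gamma_\epsilon(\Psi^h)\kappa^h]\vec\nu^h,\vec U^h\rangle$. These are precisely the negatives of the two surface-tension terms of (\ref{eq:lemHG}). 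Finally, since $d=2$ makes (\ref{eq:chij}) automatic and $F_\epsilon$ is convex so that $F_\epsilon'$ is monotone, (\ref{eq:LG}) yields $\Ds\,\langle\nabs\,\Psi^h,\nabs\,\pi^h[F_\epsilon'(\Psi^h)]\rangle \geq 0$; discarding this nonnegative diffusion term is what upgrades the identity to the inequality and produces the discrete analogue of (\ref{eq:d45}). Adding it to (\ref{eq:lemHG}) gives (\ref{eq:stabHG}).

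I expect the main obstacle to be purely the careful bookkeeping of the three distinct pairings in play — the exact $L^2$ product, the mass-lumped product $\langle\,\cdot\,,\cdot\,\rangle^h$, and the passage between $\vec U^h$ and its interpolant $\vec\pi^h\,\vec U^h$ — so that (\ref{eq:doctored}), (\ref{eq:JWB}), (\ref{eq:sdHGc}) and (\ref{eq:sdHGd}) are each invoked with exactly the pairing that makes the surface-tension contributions cancel identically rather than merely up to a consistency error. The restriction to $d=2$ is genuine and not a convenience: (\ref{eq:JWB}) has no three-dimensional counterpart, and it is the sole reason this general-$\gamma$ stability argument cannot be carried through for $d=3$.
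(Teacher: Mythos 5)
Your proof is correct and takes essentially the same route as the paper: the arguments for (\ref{eq:totalPsih}) and (\ref{eq:cons}) are identical, and your derivation of the discrete surfactant energy relation --- which the paper simply imports as (\ref{eq:ps1compact}) from \citet[Theorem~3.7]{tpfs} --- reconstructs precisely the chain used there, namely testing (\ref{eq:sdHGe}) with $\pi^h\,[F'_\epsilon(\Psi^h)]$ and invoking (\ref{eq:DElem5.6NI}), (\ref{eq:p1}), (\ref{eq:Feps}), (\ref{eq:doctored}), (\ref{eq:JWB}), (\ref{eq:sdHGc},d) and (\ref{eq:LG}). Combining the resulting inequality with (\ref{eq:lemHG}) then yields (\ref{eq:stabHG}) exactly as in the paper.
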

\begin{proof}
The conservation property (\ref{eq:totalPsih}) follows immediately from
choosing $\chi = 1$ in (\ref{eq:sdHGe}). Moreover, 
choosing $\chi = 1$ in (\ref{eq:sdHGc}) and
$\varphi= (\charfcn{\Omega_-^h(t)} -
\frac{\mathcal{L}^d(\Omega_-^h(t))}{\mathcal{L}^d(\Omega)})
\in \widehat\pspace^h(t)$ in (\ref{eq:sdHGb}), we obtain that
\begin{equation*}
\frac{\rm d}{{\rm d}t} \vol(\Omega_-^h(t)) = 
\left\langle \vec{\mathcal{V}}^h , \vec\nu^h \right\rangle_{\Gamma^h(t)}
= \left\langle \vec{\mathcal{V}}^h , \vec\nu^h \right\rangle^h_{\Gamma^h(t)}
= \left\langle \vec U^h, \vec\nu^h \right\rangle_{\Gamma^h(t)}
= \int_{\Omega_-^h(t)} \nabla\,.\,\vec U^h \dL{d} 
=0\,, 
\end{equation*}
which proves the desired result (\ref{eq:cons}). 
In \citet[Theorem~3.7]{tpfs} it was shown that
\begin{align}
& \ddt\, \left\langle F_\epsilon(\Psi^h), 1 \right\rangle_{\Gamma^h(t)}^h +
\Ds \left\langle \nabs\, \Psi^h, \nabs\, \pi^h\,[F'_\epsilon(\Psi^h)]
\right\rangle_{\Gamma^h(t)} 
\nonumber \\ & \qquad\qquad
= - \left\langle \pi^h\,[\gamma_\epsilon(\Psi^h)\,\kappa^h]\,\vec\nu^h, \vec U^h
  \right\rangle_{\Gamma^h(t)}
- \left\langle \nabs\,\pi^h\,[\gamma_\epsilon(\Psi^h)], \vec U^h \right\rangle_{\Gamma^h(t)}^h 
\,,
\label{eq:ps1compact}
\end{align}
which, similarly to (\ref{eq:sp40}), is a discrete analogue of (\ref{eq:d45}).
The desired result (\ref{eq:stabHG}) now follows from combining 
(\ref{eq:ps1compact}) with (\ref{eq:lemHG}).
\end{proof}

We remark that it is possible to prove that the vertices of the solution 
$\Gamma^h(t)$ to \mbox{(\ref{eq:sdHGa0}--f)} are
well distributed. As this follows already from the equations 
{\rm (\ref{eq:sdHGd})}, we
refer to our earlier work in \cite{triplej,gflows3d} for further details. In
particular, we observe that in the case $d=2$, i.e.\ for the planar two-phase
problem, an equidistribution property for the vertices of $\Gamma^h(t)$ can be
shown. These good mesh properties mean that for fully discrete schemes based on
(\ref{eq:sdHGa0}--f) no remeshings are required in practice for either $d=2$ or
$d=3$, and this is the main advantage of the scheme (\ref{eq:sdHGa0}--f) over
(\ref{eq:sdGDa0}--f). 
Another advantage is that the volume of the two phases is preserved for 
the approximation (\ref{eq:sdHGa0}--f), recall (\ref{eq:cons}), while it does 
not appear possible to prove a similar result for (\ref{eq:sdGDa0}--f). 
A minor disadvantage is the fact that it does not appear possible to
derive a maximum principle for the discrete surfactant concentration $\Psi^h$
similarly to (\ref{eq:sddmp}). However, the
following remark demonstrates that also for the scheme (\ref{eq:sdHGa0}--f) 
the negative part of $\Psi^h$ can be controlled.
Moreover, in practice we observe that for a fully discrete variant of
(\ref{eq:sdHGa0}--f) the fully discrete analogues of $\Psi^h(\cdot,t)$ 
remain positive for positive initial data.

\begin{rem} \label{rem:Psi-}
The convex nature of $F$, together with the fact that $F'$ is 
singular at the origin, allows us to derive upper bounds on the negative part
of $\Psi^h$ for the two cases {\rm (\ref{eq:gamma1},b)}. 
On recalling {\rm (\ref{eq:Freg})} and {\rm (\ref{eq:F})}, it holds that 
\begin{equation*}
F_\epsilon(r) 
= \gamma(\epsilon) + F'(\epsilon)\,r + \tfrac12\,F''(\epsilon)\,(r-\epsilon)^2
\geq \tfrac12\,F''(\epsilon)\,r^2 \geq 
\tfrac12\,\epsilon^{-1}\,\overline\gamma\,\beta\,r^2
\qquad \forall\ r \leq 0\,,
\end{equation*}
provided that $\epsilon$ is sufficiently small. 
Hence the bound {\rm (\ref{eq:stabHG})}, via a Korn's inequality, 
and on assuming that 
$\left\langle \rho_\Gamma^h\,\vec U^h, \vec U^h \right\rangle_{\Gamma^h(t)}^h
\geq -C_0$
for some positive constant $C_0$ that is independent of $\epsilon$,
implies that
\begin{equation*} 
\left\langle [\Psi^h]_-^2, 1 \right\rangle_{\Gamma^h(t)}^h \leq C\,\epsilon
\qquad\forall\ t \in [0,T]\,,
\end{equation*}
for some positive constant $C$, and for $\epsilon$ sufficiently small.
\end{rem}

\begin{rem} \label{rem:osch}
In order to be able to add numerical diffusion to our fully discrete schemes,
we also consider a variant of {\rm (\ref{eq:sdHGa0}--f)}, where we add
\begin{equation*} 
 \vartheta(h_\Gamma(t))\left\langle
\left| \mat {\mathcal{P}}_{\Gamma^h}
\left(\vec{\mathcal{V}}^h - \vec U^h \right)\right| 
\nabs\,\rho_\Gamma^h, \nabs\, \chi_k^h \right \rangle_{\Gamma^h(t)}^h
\end{equation*}
to the left hand side of {\rm (\ref{eq:sdHGa0})}. 
To maintain stability, we accordingly add the term
$-\tfrac12\,\vartheta(h_\Gamma(t)) \left\langle
\left| \mat{\mathcal{P}}_{\Gamma^h}
\left(\vec{\mathcal{V}}^h - \vec U^h \right)\right|
 \nabs\,\rho_\Gamma^h, \nabs\, \pi^h\,[\vec U^h\,.\,\vec\xi] 
\right \rangle_{\Gamma^h(t)}^h$
to the right hand side of {\rm (\ref{eq:sdHGa})}. Here $\vartheta(s) \geq 0$ is a 
discrete diffusion coefficient with $\vartheta(s) \to 0$ as $s \to 0$,
and $h_\Gamma(t) := \max_{j=1,\ldots,J_\Gamma} \diam{\sigma^h_j(t)}$.
Then it is easy to show that all the results in {\rm
Theorems~\ref{lem:stabHG}}, {\rm \ref{thm:stabHGconst}} and 
{\rm \ref{thm:stabHG}} still remain true. For example, in 
{\rm (\ref{eq:rhonew1})} we note that, on recalling {\rm (\ref{eq:LG})}, the 
bound {\rm (\ref{eq:rhonew2})} still holds.
\end{rem}

\begin{rem}
We recall that the stability proofs in {\rm Theorems~\ref{thm:stabGD}} and
{\rm \ref{thm:stabHG}} are restricted to the case $d=2$. 
However, it is possible to
prove stability for $d = 2$ and $d=3$ for a variant of 
{\rm (\ref{eq:sdGDa0}--f)}, which, on recalling {\rm (\ref{eq:newGD})}, 
is given by
\begin{align}
&
\tfrac12 \left[ \ddt \left( \rho^h\,\vec U^h , \vec \xi \right)
+ \left( \rho^h\,\vec U^h_t , \vec \xi \right)
- (\rho^h\,\vec U^h, \vec \xi_t) \right]
+ 2\left(\mu^h\,\mat D(\vec U^h), \mat D(\vec \xi) \right)
 \nonumber \\ & \quad
+ \tfrac12\left(\rho^h, 
 [(\vec I^h_2\,\vec U^h\,.\,\nabla)\,\vec U^h]\,.\,\vec \xi
- [(\vec I^h_2\,\vec U^h\,.\,\nabla)\,\vec \xi]\,.\,\vec U^h \right)
- \left(P^h, \nabla\,.\,\vec \xi\right)
+ \ddt\left\langle \rho_\Gamma^h\,\vec U^h, \vec\xi
\right\rangle_{\Gamma^h(t)}^h
\nonumber \\ & \quad
+ 2 \left\langle\mu_\Gamma(\Psi^h)\, \mat D_s^h (\vec\pi^h\,\vec U^h) , 
\mat D_s^h (\vec\pi^h\, \vec \xi ) \right\rangle_{\Gamma^h(t)}^h
\nonumber \\ & \quad
+  \left\langle\lambda_\Gamma(\Psi^h) \,
 \nabs\,.\, (\vec\pi^h\,\vec U^h) , \nabs\,.\, 
(\vec\pi^h\,\vec \xi) \right\rangle_{\Gamma^h(t)}^h
\nonumber \\ & \quad
+ \left\langle \gamma(\Psi^h), 
 \nabs\,.\,\vec\pi^h\,\vec\xi\right\rangle_{\Gamma^h(t)}^h
= \left(\rho^h\,\vec f^h_1 + \vec f^h_2, \vec \xi\right)
+ \left\langle \rho_\Gamma^h \,\vec U^h, \matpartxh\,(\vec\pi^h\,\vec \xi) 
\right\rangle_{\Gamma^h(t)}^h
\nonumber \\ & \hspace{10cm}
\qquad \forall\ \vec\xi \in H^1(0,T;\uspace^h) \,, \label{eq:sdGDa2}
\end{align}
together with {\rm (\ref{eq:sdGDa0},c,d,f)}. Here we observe that in this new
discretization it is no longer necessary to compute the discrete curvature
vector $\vec\kappa^h$. It is then not difficult to prove stability for this
scheme for $d=2$ and $d=3$, as {\rm (\ref{eq:JWB})} is now avoided.
See \citet[Theorem~2.7]{tpfs} for an analogous proof. 
\end{rem}

\setcounter{equation}{0}
\section{Fully discrete finite element approximation} \label{sec:3}

In this section we consider fully discrete variants of the schemes
(\ref{eq:sdGDa0}--f) and (\ref{eq:sdHGa0}--f) from \S\ref{sec:2}. 
Here we will
choose the time discretization such that existence and uniqueness of the
discrete solutions can be guaranteed, and such that we inherit as much of the
structure of the stable schemes in \cite{spurious,fluidfbp} as possible, see
below for details.

We consider the partitioning $t_m = m\,\tau$, $m = 0,\ldots, M$, 
of $[0,T]$ into uniform time steps $\tau = T / M$.
The time discrete spatial discretizations then directly follow from the finite
element spaces introduced in \S\ref{sec:2}, where in order to allow for
adaptivity in space we consider bulk finite element spaces that change in time.

For all $m\ge 0$, let $\mathcal{T}^m$ 
be a regular partitioning of $\Omega$ into disjoint open simplices
$\sigmaO^m_j$, $j = 1, \ldots, J_\Omega$. 
We set $h^m:= \max_{j=1 \rightarrow J^m_\Omega}\mbox{diam}( \sigmaO^m_j)$.
Associated with ${\cal T}^m$ are the finite element spaces
$S^m_k$ for $k\geq 0$.
We introduce also $\vec I^m_k:[C(\overline{\Omega})]^d\to [S^m_k]^d$, 
$k\geq 1$, the standard interpolation operators, and the standard projection
operator $I^m_0:L^1(\Omega)\to S^m_0$.
For the approximation to the velocity and pressure on ${\cal T}^m$ 
we will use the finite element spaces
$\uspace^m\subset\uspace$ and $\pspace^m\subset\pspace$, which are the direct
time discrete analogues of $\uspace^h$ and $\pspace^h(t_m)$,
as well as $\widehat\pspace^m := \pspace^m \cap \widehat\pspace$.
We recall that $(\uspace^m, \pspace^m)$ are said to satisfy 
the LBB inf-sup condition if
there exists a constant $C_0 \in \R_{>0}$ independent of $h^m$ such that
\begin{equation} \label{eq:LBB}
\inf_{\varphi \in \widehat\pspace^m} \sup_{\vec \xi \in \uspace^m}
\frac{( \varphi, \nabla \,.\,\vec \xi)}
{\|\varphi\|_0\,\|\vec \xi\|_1} \geq C_0\,.
\end{equation}

Moreover, 
the parametric finite element spaces are given by
\begin{equation} \label{eq:Vh}
\Vh := \{\vec\chi \in [C(\Gamma^m)]^d:\vec\chi\!\mid_{\sigma^m_j}
\mbox{ is linear}\ \forall\ j=1\to J_\Gamma\} 
=: [\Wh]^d \subset [H^1(\Gamma^m)]^d\,,
\end{equation}
for $m=0 \to M-1$. Here
$\Gamma^m=\bigcup_{j=1}^{J_\Gamma} 
\overline{\sigma^m_j}$,
where $\{\sigma^m_j\}_{j=1}^{J_\Gamma}$ is a family of mutually disjoint open 
$(d-1)$-simplices 
with vertices $\{\vec{q}^m_k\}_{k=1}^{K_\Gamma}$. 
We also introduce 
$\pi^m: C(\Gamma^m)\to \Wh$, the standard interpolation operator
at the nodes $\{\vec{q}_k^m\}_{k=1}^{K_\Gamma}$,
and similarly $\vec\pi^m: [C(\Gamma^m)]^d\to \Vh$.
Throughout this paper, we will parameterize the new closed surface 
$\Gamma^{m+1}$ over $\Gamma^m$, with the help of a parameterization
$\vec X^{m+1} \in \Vh$, i.e.\ $\Gamma^{m+1} = \vec X^{m+1}(\Gamma^m)$.

We also introduce the $L^2$--inner 
product $\langle\cdot,\cdot\rangle_{\Gamma^m}$ over
the current polyhedral surface $\Gamma^m$, as well as the 
the mass lumped inner product
$\langle\cdot,\cdot\rangle_{\Gamma^m}^h$.
Similarly to (\ref{eq:Psh},b), we introduce
\begin{subequations}
\begin{equation} \label{eq:Psm}
\mat{\mathcal{P}}_{\Gamma^m} = \mat\Id - \vec \nu^m \otimes \vec \nu^m
\quad\text{on}\ \Gamma^m\,,
\end{equation}
and
\begin{equation} \label{eq:Dsm}
\mat D_s^m(\vec \eta) = \tfrac12\,\mat{\mathcal{P}}_{\Gamma^m}
\,(\nabs\,\vec\eta + (\nabs\,\vec \eta)^T)\,\mat{\mathcal{P}}_{\Gamma^m}
\quad\text{on}\ \Gamma^m\,,
\end{equation}
\end{subequations}
where here $\nabs = \mat{\mathcal{P}}_{\Gamma^m} \,\nabla$ 
denotes the surface gradient on $\Gamma^m$.
In addition, and similarly to (\ref{eq:hatDsh}), we define
\begin{equation} \label{eq:hatDsm}
\mat {\hat D}_s^m(\vec \eta) 
= \mat D_s^m(\vec \eta) - \tfrac1{d-1}\left( \nabs\,.\,\vec \eta \right)
\mat{\mathcal{P}}_{\Gamma^m}
\quad\text{on}\ \Gamma^m\,.
\end{equation}
Then it is straightforward to show that
\begin{align}
& 2\left\langle\mu_\Gamma(\chi)\, \mat D_s^m(\vec \eta), \mat D_s^m(\vec \eta) 
\right\rangle_{\Gamma^m}^h 
+  \left\langle\lambda_\Gamma(\chi) \,
 \nabs\,.\, \vec \eta , \nabs\,.\, \vec \eta \right\rangle_{\Gamma^m}^h 
\nonumber \\ & \hspace{1cm}
= 2\left\langle\mu_\Gamma(\chi)\, \mat {\hat D}_s^m(\vec \eta), 
\mat {\hat D}_s^m(\vec \eta) \right\rangle_{\Gamma^m}^h 
+  \left\langle (\lambda_\Gamma(\chi) + \tfrac2{d-1}\,\mu_\Gamma(\chi))\,
 \nabs\,.\, \vec \eta , \nabs\,.\, \vec \eta \right\rangle_{\Gamma^m}^h 
\nonumber \\ & \hspace{7cm}
\qquad \forall\ \vec\eta \in \Vh\,,\chi \in \Wh
\label{eq:HGm}
\end{align}
holds, which is the fully discrete analogue of (\ref{eq:HGh}). 

Given $\Gamma^m$, we 
let $\Omega^m_+$ denote the exterior of $\Gamma^m$ and let
$\Omega^m_-$ denote the interior of $\Gamma^m$, so that
$\Gamma^m = \partial \Omega^m_- = \overline{\Omega^m_-} \cap 
\overline{\Omega^m_+}$. 
We then partition the elements of the bulk mesh 
$\mathcal{T}^m$ into interior, exterior and interfacial elements as before, and
we introduce  
$\rho^m,\,\mu^m \in S^m_0$, for $m\geq 0$, as 
\begin{equation} \label{eq:rhoma}
\rho^m\!\mid_{o^m} = \begin{cases}
\rho_- & o^m \in \mathcal{T}^m_-\,, \\
\rho_+ & o^m \in \mathcal{T}^m_+\,, \\
\tfrac12\,(\rho_- + \rho_+) & o^m \in \mathcal{T}^m_{\Gamma^m}\,,
\end{cases}
\quad\text{and}\quad
\mu^m\!\mid_{o^m} = \begin{cases}
\mu_- & o^m \in \mathcal{T}^m_-\,, \\
\mu_+ & o^m \in \mathcal{T}^m_+\,, \\
\tfrac12\,(\mu_- + \mu_+) & o^m \in \mathcal{T}^m_{\Gamma^m}\,.
\end{cases}
\end{equation}

We introduce the following pullback and pushforward operators 
for the discrete interfaces $\Gamma^m$ and $\Gamma^{m-1}$. 
Let $\vec\Pi_{m}^{m-1} : [C(\Gamma^{m})]^d \to
\Vhm$ such that
\begin{subequations}
\begin{equation} \label{eq:Pi}
(\vec\Pi_{m}^{m-1}\,\vec z)(\vec q^{m-1}_k) = \vec z(\vec q^{m}_k)\,,
\qquad k = 1,\ldots,K_\Gamma\,,\qquad
\forall\ \vec z \in [C(\Gamma^{m})]^d\,,
\end{equation}
for $m=1,\ldots,M-1$, and set $\vec\Pi_{0}^{-1} := \vec \pi^0$.
Similarly, let $\vec\Pi_{m-1}^m : [C(\Gamma^{m-1})]^d \to \Vh$ such that
\begin{equation} \label{eq:Pib}
(\vec\Pi_{m-1}^m\,\vec z)(\vec q^m_k) = \vec z(\vec q^{m-1}_k)\,,
\qquad k = 1,\ldots,K_\Gamma\,,\qquad
\forall\ \vec z \in [C(\Gamma^{m-1})]^d\,,
\end{equation}
\end{subequations}
for $m=1,\ldots,M-1$, and set $\vec\Pi_{-1}^0 := \vec \pi^0$. Analogously to
(\ref{eq:Pib}) we also introduce $\Pi_{m-1}^m : C(\Gamma^{m-1}) \to \Wh$.

We 
set $\rho^{-1} := \rho^0$,
$\Gamma^{-1} := \Gamma^0$, $\vec X^{-1} := \vec X^0$ and
$\rho^{-1}_\Gamma:=\rho^0_\Gamma$.
Our proposed fully discrete equivalent of (\ref{eq:sdGDa0}--f) is then given as
follows.
Let $\Gamma^0$, an approximation to $\Gamma(0)$, 
and $\vec U^0\in \uspace^0$, 
$\vec\kappa^0 \in \underline{V}(\Gamma^0)$,
$\rho_\Gamma^0 \in W(\Gamma^0)$ and $\Psi^0 \in W(\Gamma^0)$ be given.
For $m=0\to M-1$, find $\vec U^{m+1} \in \uspace^m$, 
$P^{m+1} \in \widehat\pspace^m$, $\vec X^{m+1}\in\Vh$
and $\vec\kappa^{m+1} \in \Vh$ such that 
\begin{subequations}
\begin{align}
&
\tfrac12 \left( \frac{\rho^m\,\vec U^{m+1} - (I^m_0\,\rho^{m-1})
\,\vec I^m_2\,\vec U^m}{\tau}
+(I^m_0\,\rho^{m-1}) \,\frac{\vec U^{m+1}- \vec I^m_2\,\vec{U}^m}{\tau}, \vec \xi 
\right)
 \nonumber \\ & \quad
+ 2\left(\mu^m\,\mat D(\vec U^{m+1}), \mat D(\vec \xi) \right)
+ \tfrac12\left(\rho^m, 
 [(\vec I^m_2\,\vec U^m\,.\,\nabla)\,\vec U^{m+1}]\,.\,\vec \xi
- [(\vec I^m_2\,\vec U^m\,.\,\nabla)\,\vec \xi]\,.\,\vec U^{m+1} \right)
\nonumber \\ & \quad
- \left(P^{m+1}, \nabla\,.\,\vec \xi\right)
+ \frac1{\tau} \left\langle \rho_\Gamma^m\,\vec U^{m+1} , \vec\xi
 \right\rangle_{\Gamma^m}^h
+ 2 \left\langle \mu_\Gamma(\Psi^m)\, \mat D_s^m (\vec\pi^m\,\vec U^{m+1}) , 
\mat D_s^m ( \vec\pi^m\,\vec \xi ) \right\rangle_{\Gamma^m}^h
\nonumber \\ & \quad
+  \left\langle\lambda_\Gamma(\Psi^m) \,
 \nabs\,.\,(\vec\pi^m\,\vec U^{m+1}) , 
\nabs\,.\, (\vec\pi^m\,\vec \xi) \right\rangle_{\Gamma^m}^h
\nonumber \\ & \quad
- \left\langle \gamma(0)\,(\vec\kappa^{m+1} - \vec\Pi_{m-1}^m\,\vec\kappa^m)
+ \gamma(\Psi^m)\,\vec\Pi_{m-1}^m\,\vec\kappa^{m} 
 + \nabs\,[\pi^m\,\gamma(\Psi^m)], \vec \xi \right\rangle_{\Gamma^m}^h 
\nonumber \\ & \hspace{1cm}\quad
= \left(\rho^m\,\vec f^{m+1}_1 + \vec f^{m+1}_2, \vec \xi\right)
+ \frac1{\tau} \left\langle \rho^{m-1}_\Gamma\,\vec I^m_2\,\vec U^m,
 \vec\Pi_m^{m-1}\,\vec \xi\!\mid_{\Gamma^m} \right\rangle_{\Gamma^{m-1}}^h
\qquad \forall\ \vec\xi \in \uspace^m \,, \label{eq:GDa}\\
& \left(\nabla\,.\,\vec U^{m+1}, \varphi\right) = 0 
\qquad \forall\ \varphi \in \widehat\pspace^m\,,
\label{eq:GDb} \\
& \left\langle \frac{\vec X^{m+1} - \vec \id}{\tau} ,
\vec\chi \right\rangle_{\Gamma^m}^h
= \left\langle \vec U^{m+1}, \vec\chi \right\rangle_{\Gamma^m}^h
 \qquad\forall\ \vec\chi \in \Vh\,,
\label{eq:GDc} \\
& \left\langle \vec\kappa^{m+1} , \vec\eta \right\rangle_{\Gamma^m}^h
+ \left\langle \nabs\,\vec X^{m+1}, \nabs\,\vec \eta \right\rangle_{\Gamma^m}
 = 0  \qquad\forall\ \vec\eta \in \Vh\label{eq:GDd} \\
\intertext{and set $\Gamma^{m+1} = \vec X^{m+1}(\Gamma^m)$. 
Then find 
$\rho_\Gamma^{m+1} \in \Whp$ 
and 
$\Psi^{m+1} \in \Whp$ such that}
& 
\left\langle \rho_\Gamma^{m+1}, \chi^{m+1}_k \right\rangle_{\Gamma^{m+1}}^h
= 
\left\langle \rho_{\Gamma}^{m}, \chi^{m}_k \right\rangle_{\Gamma^m}^h 
\qquad\forall\ k \in \{1,\ldots,K_\Gamma\}\,,
\label{eq:GDa0}\\
& \frac1{\tau}
\left\langle \Psi^{m+1}, \chi^{m+1}_k \right\rangle_{\Gamma^{m+1}}^h
+ \Ds\left\langle \nabs\, \Psi^{m+1}, \nabs\, \chi^{m+1}_k
\right\rangle_{\Gamma^{m+1}}
= \frac1{\tau}
\left\langle \Psi^{m}, \chi^{m}_k \right\rangle_{\Gamma^m}^h 
\nonumber \\ & \hspace{10cm}
\quad\forall\ k \in \{1,\ldots,K_\Gamma\}\,.
\label{eq:GDe}
\end{align}
\end{subequations}
Here we have defined $\vec f^{m+1}_i := \vec I^m_2\,\vec
f_i(\cdot,t_{m+1})$, $i=1,2$. 
We observe that (\ref{eq:GDa}--f) is a linear scheme in that
it leads to a linear system of equations for the unknowns  \linebreak
$(\vec U^{m+1}, P^{m+1}, \vec X^{m+1}, \vec\kappa^{m+1}, \rho_{\Gamma}^{m+1},\Psi^{m+1})$ 
at each time level. In particular, the system \mbox{(\ref{eq:GDa}--f)} clearly
decouples into (\ref{eq:GDa}--d) for $(\vec U^{m+1}, P^{m+1}, \vec X^{m+1},
\vec \kappa^{m+1})$, (\ref{eq:GDa0}) for $\rho_\Gamma^{m+1}$ and (\ref{eq:GDe}) for $\Psi^{m+1}$.

We note that the right hand side in (\ref{eq:GDa}) was obtained from
\begin{align} \label{eq:rhsderiv}
 \frac1{\tau} \left\langle \rho^{m-1}_\Gamma\,\vec I^m_2\,\vec U^m,
 \vec\Pi_m^{m-1}\,\vec \xi\!\mid_{\Gamma^m} \right\rangle_{\Gamma^{m-1}}^h
& =
\frac1{\tau} \left\langle \rho^{m-1}_\Gamma\,\vec I^m_2\,\vec U^{m} , \vec\xi
 \right\rangle_{\Gamma^{m-1}}^h \nonumber \\ & \qquad
+ \frac1{\tau} \left\langle \rho^{m-1}_\Gamma\,\vec I^m_2\,\vec U^m,
 \vec\Pi_m^{m-1}\,\vec \xi\!\mid_{\Gamma^m} - \vec\xi 
 \right\rangle_{\Gamma^{m-1}}^h\,,
\end{align}
where we recall from (\ref{eq:mpbf}) and (\ref{eq:p1}) that the last term in
(\ref{eq:rhsderiv}) is a fully discrete approximation of the last term in
(\ref{eq:sdGDa}). 

When the velocity/pressure space pair $(\uspace^m,\widehat\pspace^m)$ does not
satisfy (\ref{eq:LBB}), we need to consider the following reduced version of
(\ref{eq:GDa}--d), where the pressure $P^{m+1}$ is eliminated, 
in order to prove existence of a solution.
Let 
$$\uspace^m_0 := 
\{ \vec U \in \uspace^m : (\nabla\,.\,\vec U, \varphi) = 0 \ \
\forall\ \varphi \in \widehat\pspace^m \} \,.$$ 
Then any solution $(\vec U^{m+1}, P^{m+1}, \vec X^{m+1}, \vec\kappa^{m+1}) \in 
\uspace^m\times\widehat\pspace^m \times [\Vh]^2$ to {\rm (\ref{eq:GDa}--d)}
is such that $(\vec U^{m+1}, \vec X^{m+1}, \vec\kappa^{m+1})\in 
\uspace^m_0 \times [\Vh]^2$ satisfy
(\ref{eq:GDa},c,d) with $\uspace^m$ replaced by $\uspace^m_0$.

In order to prove the existence of a unique solution to (\ref{eq:GDa}--f) we
make the following very mild well-posedness assumption.

\begin{itemize}
\item[$(\mathcal{A})$]
We assume for $m=0,\ldots, M-1$ that $\mathcal{H}^{d-1}(\sigma^m_j) > 0$ 
for all $j=1,\ldots, J_\Gamma$,
and that $\Gamma^m \subset \Omega$.
\end{itemize}

Moreover, and similarly to (\ref{eq:chij}), we note that the assumption
\begin{equation} \label{eq:chijm}
\int_{\sigma^{m+1}_j} \nabs \chi^{m+1}_i \,.\,\nabs \chi^{m+1}_k 
\dH{d-1} \leq 0 
\quad \forall\ i \neq k\,,\qquad j = 1,\ldots,J_\Gamma\,,
\end{equation}
is always satisfied for $d=2$, and for $d=3$ if all the triangles 
$\sigma^{m+1}$ of $\Gamma^{m+1}$ have no obtuse angles.

\begin{thm} \label{thm:GD}
Let the assumption $(\mathcal{A})$ hold and let $\rho^m_\Gamma \geq 0$.
If the LBB condition {\rm (\ref{eq:LBB})} holds, then there exists a unique
solution $(\vec U^{m+1}, P^{m+1}, \vec X^{m+1}, \vec\kappa^{m+1}) 
\in \uspace^m\times\widehat\pspace^m \times [\Vh]^2$ 
to {\rm (\ref{eq:GDa}--d)}. In all other
cases there exists a unique solution 
$(\vec U^{m+1}, \vec X^{m+1}, \vec\kappa^{m+1}) \in 
\uspace^m_0 \times [\Vh]^2$ to the
reduced system {\rm (\ref{eq:GDa},c,d)} with $\uspace^m$ replaced by
$\uspace^m_0$.
In either case, there exists a unique solution 
$(\rho_\Gamma^{m+1}, \Psi^{m+1})\in [\Whp]^2$ to 
{\rm (\ref{eq:GDa0},f)} that satisfies 
\begin{subequations}
\begin{equation} \label{eq:consm}
\left\langle \rho_\Gamma^{m+1}, 1 \right\rangle_{\Gamma^{m+1}} =
\left\langle \rho_\Gamma^{m}, 1 \right\rangle_{\Gamma^{m}}
\quad\text{and}\quad
\left\langle \Psi^{m+1}, 1 \right\rangle_{\Gamma^{m+1}} =
\left\langle \Psi^{m}, 1 \right\rangle_{\Gamma^{m}}
\end{equation}
and
\begin{equation} \label{eq:dmprho}
\rho_\Gamma^{m+1} \geq 0\,. 
\end{equation}
Moreover, if $\Ds=0$ or if the assumption {\rm (\ref{eq:chijm})} holds, then
\begin{equation} \label{eq:dmp}
\Psi^{m+1} \geq 0 \qquad \text{if}\quad \Psi^m \geq 0\,.
\end{equation}
\end{subequations}
\end{thm}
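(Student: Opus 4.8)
The plan is to exploit the decoupled, linear, finite-dimensional structure of \mbox{(\ref{eq:GDa}--f)}. Since \mbox{(\ref{eq:GDa}--d)} constitutes a square linear system for the block $(\vec U^{m+1}, P^{m+1}, \vec X^{m+1}, \vec\kappa^{m+1})$ (respectively the reduced system for $(\vec U^{m+1}, \vec X^{m+1}, \vec\kappa^{m+1})$ on $\uspace^m_0\times[\Vh]^2$ when (\ref{eq:LBB}) is unavailable), existence of a solution is equivalent to uniqueness, so it suffices to show that the only solution of the associated homogeneous problem (all previous-time data set to zero) is trivial. The equations (\ref{eq:GDa0}) and (\ref{eq:GDe}) for $\rho_\Gamma^{m+1}$ and $\Psi^{m+1}$ then decouple from this block and are handled separately.

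For the homogeneous version of \mbox{(\ref{eq:GDa}--d)} I would test (\ref{eq:GDa}) with $\vec\xi = \vec U^{m+1}$ and (\ref{eq:GDb}) with $\varphi = P^{m+1}$. The skew-symmetric convective term vanishes identically, the pressure term cancels against (\ref{eq:GDb}), and the curvature coupling is resolved by choosing $\vec\chi = \vec\kappa^{m+1}$ in (\ref{eq:GDc}) and $\vec\eta = \vec X^{m+1}$ in (\ref{eq:GDd}), which converts $-\gamma(0)\,\langle\vec\kappa^{m+1}, \vec U^{m+1}\rangle^h_{\Gamma^m}$ into $\tfrac{\gamma(0)}{\tau}\,\langle\nabs\,\vec X^{m+1}, \nabs\,\vec X^{m+1}\rangle_{\Gamma^m}\geq 0$. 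Invoking (\ref{eq:HGm}) together with (\ref{eq:mulambda}) shows the surface-viscous contribution is nonnegative, and since $\rho^m_\Gamma \geq 0$, $\rho^m > 0$, $\mu^m > 0$ and $\gamma(0) > 0$, every term on the resulting left-hand side is nonnegative while their sum is zero. The inertial term then forces $\vec U^{m+1} = \vec 0$; equation (\ref{eq:GDc}) and positive-definiteness of the lumped inner product on $\Vh$ (guaranteed by $(\mathcal{A})$) give $\vec X^{m+1} = \vec 0$; equation (\ref{eq:GDd}) gives $\vec\kappa^{m+1} = \vec 0$; and finally (\ref{eq:GDa}) collapses to $(P^{m+1}, \nabla\,.\,\vec\xi) = 0$ for all $\vec\xi\in\uspace^m$, whence $P^{m+1} = 0$ by (\ref{eq:LBB}). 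When (\ref{eq:LBB}) fails, testing with $\vec\xi\in\uspace^m_0$ removes the pressure term and the same chain yields uniqueness, hence existence, of the reduced triple.

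For the surface density, mass lumping diagonalizes (\ref{eq:GDa0}): using $\langle\chi^{m+1}_l, \chi^{m+1}_k\rangle^h_{\Gamma^{m+1}} = \delta_{lk}\,\langle 1, \chi^{m+1}_k\rangle^h_{\Gamma^{m+1}}$ with strictly positive lumped masses (again by $(\mathcal{A})$), the nodal values are explicit, $\rho_\Gamma^{m+1}(\vec q^{m+1}_k) = \langle\rho_\Gamma^m, \chi^m_k\rangle^h_{\Gamma^m}/\langle 1, \chi^{m+1}_k\rangle^h_{\Gamma^{m+1}}$, giving existence, uniqueness and the nonnegativity (\ref{eq:dmprho}) at once. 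For $\Psi^{m+1}$ the matrix of (\ref{eq:GDe}) is $\tfrac1\tau$ times the positive lumped mass matrix plus $\Ds$ times the symmetric positive semi-definite stiffness matrix, hence symmetric positive definite and invertible. Both identities in (\ref{eq:consm}) follow by summing (\ref{eq:GDa0}), respectively (\ref{eq:GDe}), over $k$, using $\sum_k\chi^{m+1}_k \equiv 1$, $\nabs\,1 = 0$, and the coincidence of the lumped and exact inner products when one argument is $1$ and the other is piecewise linear. The nonnegativity (\ref{eq:dmp}) is a discrete maximum principle: it is immediate from the diagonal form if $\Ds = 0$, while if (\ref{eq:chijm}) holds the off-diagonal stiffness entries are nonpositive, so the system matrix is an $M$-matrix with nonnegative inverse acting on the nonnegative right-hand side $\tfrac1\tau\langle\Psi^m, \chi^m_k\rangle^h_{\Gamma^m}$.

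The step I expect to carry the weight of the argument is the coercivity of the coupled velocity/position/curvature block, which hinges on the skew-symmetric treatment of convection and on the specific semi-implicit splitting $\gamma(0)\,(\vec\kappa^{m+1} - \vec\Pi_{m-1}^m\,\vec\kappa^m) + \gamma(\Psi^m)\,\vec\Pi_{m-1}^m\,\vec\kappa^m$ of the curvature forcing in (\ref{eq:GDa}): the implicit part isolates the positive-definite contribution $\tfrac{\gamma(0)}{\tau}\,|\nabs\,\vec X^{m+1}|^2$, while the remainder is pure data. Everything else is the mass-lumping and $M$-matrix bookkeeping familiar from the authors' earlier work, and the omitted computations run entirely parallel to the corresponding existence results in \cite{tpfs} (see also \cite{spurious,fluidfbp}).
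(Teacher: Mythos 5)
Your proposal is correct and follows essentially the same route as the paper: existence via uniqueness for the homogeneous system, tested with $\vec U^{m+1}$, $P^{m+1}$, $\gamma(0)\,\vec\kappa^{m+1}$ and $\gamma(0)\,\vec X^{m+1}$ to obtain the nonnegative energy identity, followed by the symmetric positive definite (indeed diagonal, for $\rho_\Gamma^{m+1}$) structure of the surface equations. The only deviations are cosmetic bookkeeping — you deduce $\vec X^{m+1}=\vec 0$ directly from (\ref{eq:GDc}) and read off nonnegativity from the explicit nodal formula and an $M$-matrix argument, where the paper tests with $[\rho_\Gamma^{m+1}]_-$ and $[\Psi^{m+1}]_-$ — and these are equivalent.
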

\begin{proof}
As all the systems are linear, existence follows from uniqueness.
In order to establish the latter, we will consider the homogeneous 
system in each case. We begin with:
Find $(\vec U, P, \vec X, \vec\kappa) \in \uspace^m\times\widehat\pspace^m
\times [\Vh]^2$ such that
\begin{subequations}
\begin{align}
&
\tfrac1{2\,\tau} \left( (\rho^m+I^m_0\,\rho^{m-1})\,\vec U, \vec \xi \right)
+ 2\left(\mu^m\,\mat D(\vec U), \mat D(\vec \xi) \right)
- \left(P, \nabla\,.\,\vec \xi\right)
\nonumber \\ & \qquad
+ \tfrac12\left(\rho^m, [(\vec I^m_2\,\vec U^m\,.\,\nabla)\,\vec U]\,.\,\vec \xi
- [(\vec I^m_2\,\vec U^m\,.\,\nabla)\,\vec \xi]\,.\,\vec U \right)
\nonumber \\ & \quad
+ \tfrac1{\tau} \left\langle \rho^m_\Gamma\,\vec U , \vec\xi
 \right\rangle_{\Gamma^m}^h
+ 2 \left\langle\mu_\Gamma(\Psi^m)\, \mat D_s^m (\vec\pi^m\,\vec U) , 
\mat D_s^m ( \vec\pi^m\,\vec \xi ) \right\rangle_{\Gamma^m}^h
\nonumber \\ & \quad
+  \left\langle\lambda_\Gamma(\Psi^m) \,
 \nabs\,.\,(\vec\pi^m\,\vec U) , 
\nabs\,.\, (\vec\pi^m\,\vec \xi) \right\rangle_{\Gamma^m}^h
- \gamma(0)\left\langle \vec\kappa, \vec \xi \right\rangle_{\Gamma^m}^h 
= 0 
 \qquad \forall\ \vec\xi \in \uspace^m \,, \label{eq:proofa}\\
& \left(\nabla\,.\,\vec U, \varphi\right)  = 0 
\qquad \forall\ \varphi \in \widehat\pspace^m\,,
\label{eq:proofb} \\
& \tfrac1{\tau} \left\langle \vec X,\vec\chi \right\rangle_{\Gamma^m}^h
= \left\langle \vec U, \vec\chi \right\rangle_{\Gamma^m}^h
 \qquad\forall\ \vec\chi \in \Vh\,,
\label{eq:proofc} \\
& \left\langle \vec\kappa , \vec\eta \right\rangle_{\Gamma^m}^h
+ \left\langle \nabs\,\vec X, \nabs\,\vec \eta \right\rangle_{\Gamma^m}
 = 0  \qquad\forall\ \vec\eta \in \Vh \,.\label{eq:proofd} 
\end{align}
\end{subequations}
Choosing $\vec\xi=\vec U$ in (\ref{eq:proofa}),
$\varphi =  P$ in (\ref{eq:proofb}),
$\vec\chi = \gamma(0)\,\vec\kappa$ in (\ref{eq:proofc}) 
and $\vec\eta=\gamma(0)\,\vec X$ in (\ref{eq:proofd})
yields, on recalling (\ref{eq:HGm}), that
\begin{align}
& \tfrac12\left((\rho^m + I^m_0\,\rho^{m-1})\,\vec U, \vec U \right) + 
2\,\tau\left(\mu^m\,\mat D(\vec U), \mat D(\vec U) \right)
+ \left\langle \rho^m_\Gamma\,\vec U , \vec U \right\rangle_{\Gamma^m}^h
\nonumber \\ & \quad
+ 2\,\tau \left\langle\mu_\Gamma(\Psi^m)\, 
\mat {\hat D}_s^m (\vec\pi^m\,\vec U) , 
\mat {\hat D}_s^m ( \vec\pi^m\,\vec U) \right\rangle_{\Gamma^m}^h
\nonumber \\ & \quad
+ \tau 
 \left\langle (\lambda_\Gamma(\Psi^m) + \tfrac2{d-1}\,\mu_\Gamma(\Psi^m))\,
  \nabs\,.\,(\vec\pi^m\,\vec U) , 
\nabs\,.\, (\vec\pi^m\,\vec U) \right\rangle_{\Gamma^m}^h
+ \gamma(0)\left\langle \nabs\,\vec X, \nabs\,\vec X \right\rangle_{\Gamma^m} 
=0\,. \label{eq:proof2GD}
\end{align}
It immediately follows from (\ref{eq:proof2GD}), on recalling $\rho_\pm > 0$
and (\ref{eq:mulambda}),
that $\vec U = \vec 0 \in \uspace^m$.
Moreover, (\ref{eq:proofa}) with $\vec U = \vec 0$ implies,
together with (\ref{eq:LBB}), that $P = 0 \in \widehat\pspace^m$. This shows
existence and uniqueness of 
$(\vec U^{m+1}, P^{m+1}) \in \uspace^m\times\widehat\pspace^m$.
The proof for the reduced equation is very similar. The homogeneous system to
consider is (\ref{eq:proofa}) with $\uspace^m$ replaced by
$\uspace^m_0$, where we note that the latter is a linear subspace of
$\uspace^m$. As before, (\ref{eq:proof2GD}) 
yields that $\vec U = \vec 0 \in \uspace^m_0$, and so the existence of a unique
solution $\vec U^{m+1} \in \uspace^m_0$ to the reduced equation.
In addition, it follows from (\ref{eq:proof2GD}) that 
$\vec X = \vec X_c \in \R^d$. Hence (\ref{eq:proofd}) yields that
$\vec\kappa=\vec0$, while (\ref{eq:proofc}) with  $\vec U = \vec 0$ implies
that $\vec X = \vec 0$.

The two equations (\ref{eq:GDa0},f) are clearly symmetric, positive definite 
linear systems with unique solutions $\rho_\Gamma^{m+1} \in \Whp$ and
$\Psi^{m+1} \in \Whp$, respectively. The desired results in
(\ref{eq:consm}) follow on summing (\ref{eq:GDa0}) and (\ref{eq:GDe})
for $k = 1, \ldots, K_\Gamma$, respectively. 
In order to prove (\ref{eq:dmprho}) we note that $\rho_\Gamma^m\geq0$ in 
(\ref{eq:GDa0}) implies that
\begin{equation} \label{eq:dmprho1}
\left\langle [\rho_\Gamma^{m+1}]_-, [\rho_\Gamma^{m+1}]_- 
\right\rangle_{\Gamma^{m+1}}^h 
= \left\langle \rho_\Gamma^{m+1}, [\rho_\Gamma^{m+1}]_- 
\right\rangle_{\Gamma^{m+1}}^h 
\leq0\,,
\end{equation}
i.e.\ $\rho_\Gamma^{m+1} \geq 0$.
Similarly, on assuming $\Psi^m\geq0$ we observe from 
(\ref{eq:GDe}) that this implies that
\begin{equation} \label{eq:dmp1}
\left\langle \Psi^{m+1}, [\Psi^{m+1}]_- \right\rangle_{\Gamma^{m+1}}^h
+ \tau\,\Ds\left\langle \nabs\, \Psi^{m+1}, \nabs\, \pi^{m+1}\,[\Psi^{m+1}]_-
\right\rangle_{\Gamma^{m+1}} \leq 0\,.
\end{equation}
Similarly to (\ref{eq:LG}) it follows that under our assumptions the
second term in (\ref{eq:dmp1}) is nonnegative,
which yields that 
$\Psi^{m+1} \geq 0$, similarly to (\ref{eq:dmprho1}). 
\end{proof}

Let 
\begin{equation*} 
\mathcal{E}(\xi,\vec V,\mathcal{M}) := 
\tfrac12\,(\xi\,\vec V, \vec V) + \gamma(0)\, \mathcal{H}^{d-1}(\mathcal{M})\,,
\end{equation*}
for $\xi \in L^\infty(\Omega)$, $\vec V \in \uspace$ and $\mathcal{M} \subset
\R^d$ being a $(d-1)$-dimensional manifold.

\begin{thm} \label{thm:fdstabGD}
Let $\gamma$ be defined as in {\rm (\ref{eq:Fconst})}, let 
{\rm (\ref{eq:muconst})} hold, let $\rho_\Gamma^m=\rho_\Gamma^{m-1}=0$ 
and let 
$(\vec U^{m+1}, P^{m+1}, \vec X^{m+1}, \vec\kappa^{m+1},\rho_\Gamma^{m+1})$
be a solution to {\rm (\ref{eq:GDa}--e)}. 
Then $\rho_\Gamma^{m+1} = 0$ and
\begin{align}
& \mathcal{E}(\rho^m,\vec U^{m+1}, \Gamma^{m+1})
+ \tfrac12\left((I^m_0\rho^{m-1})\,(\vec U^{m+1} - \vec I^m_2\,\vec U^m), 
\vec U^{m+1} - \vec I^m_2\,\vec U^m \right) 
\nonumber \\ & \hspace{1cm}
+ 2\,\tau\left(\mu^m\,\mat D(U^{m+1}), \mat D(U^{m+1}) \right)
\nonumber \\ & \hspace{1cm}
+ 2\,\tau\,\overline\mu_\Gamma
\left\langle \mat {\hat D}_s^{m+1} (\vec\pi^{m+1}\,\vec U^{m+1}) , 
\mat {\hat D}_s^{m+1} (\vec\pi^{m+1}\, \vec U^{m+1} ) 
\right\rangle_{\Gamma^{m+1}}
\nonumber \\ & \hspace{1cm}
+ \tau\,(\overline\lambda_\Gamma + \tfrac2{d-1}\,\overline\mu_\Gamma)
 \left\langle \nabs\,.\, (\vec\pi^{m+1}\,\vec U^{m+1}) , \nabs\,.\, 
(\vec\pi^{m+1}\,\vec U^{m+1}) \right\rangle_{\Gamma^{m+1}}
\nonumber \\ & \hspace{3cm}
\leq \mathcal{E}(I^m_0\,\rho^{m-1},\vec I^m_2\,\vec U^m,\Gamma^{m}) 
+ \tau\left( \rho^m\,\vec f^{m+1}_1 + \vec f^{m+1}_2, 
\vec U^{m+1} \right).
\label{eq:fdstabGD}
\end{align}
\end{thm}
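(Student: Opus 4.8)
The plan is to test the coupled system (\ref{eq:GDa}--d) against a carefully chosen tuple and to exploit the structure of the discrete time derivative, in the spirit of the semi-discrete result Theorem~\ref{thm:stabGDconst}. First I would dispose of the surface material density. Since $\rho_\Gamma^m=0$, choosing $\chi=\chi_k^{m+1}$ in (\ref{eq:GDa0}) and using that the mass-lumped product $\langle\cdot,\cdot\rangle_{\Gamma^{m+1}}^h$ is diagonal gives $\langle 1,\chi_k^{m+1}\rangle_{\Gamma^{m+1}}\,\rho_\Gamma^{m+1}(\vec q_k^{m+1})=0$ for every $k$, exactly as in (\ref{eq:rhohpos}). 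By assumption $(\mathcal{A})$ each weight $\langle 1,\chi_k^{m+1}\rangle_{\Gamma^{m+1}}$ is strictly positive, whence $\rho_\Gamma^{m+1}\equiv 0$. Consequently every term in (\ref{eq:GDa}) carrying $\rho_\Gamma^m$ or $\rho_\Gamma^{m-1}$ vanishes, and the curvature term simplifies: with $\gamma(\Psi^m)=\gamma(0)=\overline\gamma$ and $\nabs[\pi^m\gamma(\Psi^m)]=0$ the two contributions involving $\vec\Pi_{m-1}^m\vec\kappa^m$ cancel, leaving $-\overline\gamma\langle\vec\kappa^{m+1},\vec\xi\rangle_{\Gamma^m}^h$.

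Next I would choose $\vec\xi=\vec U^{m+1}$ in (\ref{eq:GDa}), $\varphi=P^{m+1}$ in (\ref{eq:GDb}), $\vec\chi=\overline\gamma\,\vec\kappa^{m+1}$ in (\ref{eq:GDc}) and $\vec\eta=\frac{\overline\gamma}{\tau}(\vec X^{m+1}-\vec\id)$ in (\ref{eq:GDd}), and add the four identities. The antisymmetric convection term drops out immediately, and the pressure term cancels against (\ref{eq:GDb}). For the inertial block I would apply the elementary identity $\vec a\,.\,(\vec a-\vec b)=\tfrac12|\vec a|^2-\tfrac12|\vec b|^2+\tfrac12|\vec a-\vec b|^2$ to the weighted difference quotient, which produces precisely the kinetic energy difference $\tfrac1\tau[\tfrac12(\rho^m\vec U^{m+1},\vec U^{m+1})-\tfrac12(I^m_0\rho^{m-1}\,\vec I^m_2\vec U^m,\vec I^m_2\vec U^m)]$ together with the nonnegative numerical dissipation $\tfrac{1}{2\tau}(I^m_0\rho^{m-1}(\vec U^{m+1}-\vec I^m_2\vec U^m),\vec U^{m+1}-\vec I^m_2\vec U^m)$. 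The bulk viscous term becomes $2\|[\mu^m]^{1/2}\mat D(\vec U^{m+1})\|_0^2$, and the surface viscous contributions are recombined by (\ref{eq:HGm}) into the shear and dilatational dissipation appearing on the left of (\ref{eq:fdstabGD}), both nonnegative thanks to (\ref{eq:mulambda}).

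Finally comes the geometric heart of the argument. Combining (\ref{eq:GDc}) and (\ref{eq:GDd}) as above yields $-\overline\gamma\langle\vec\kappa^{m+1},\vec U^{m+1}\rangle_{\Gamma^m}^h=\frac{\overline\gamma}{\tau}\langle\nabs\vec X^{m+1},\nabs(\vec X^{m+1}-\vec\id)\rangle_{\Gamma^m}$. The main obstacle is to bound this from below by the area change; I would invoke the now-standard parametric estimate from the authors' earlier work \cite{triplej,gflows3d,fluidfbp}, namely $\langle\nabs\vec X^{m+1},\nabs(\vec X^{m+1}-\vec\id)\rangle_{\Gamma^m}\ge\mathcal{H}^{d-1}(\Gamma^{m+1})-\mathcal{H}^{d-1}(\Gamma^m)$, which follows on each simplex from the fact that $\vec X^{m+1}$ is there affine, together with a convexity argument on the singular values of its tangential differential. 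This is exactly the step that turns the would-be energy identity into the inequality (\ref{eq:fdstabGD}). Collecting all contributions, multiplying through by $\tau$, and regrouping $\tfrac12(\rho^m\vec U^{m+1},\vec U^{m+1})+\overline\gamma\,\mathcal{H}^{d-1}(\Gamma^{m+1})=\mathcal{E}(\rho^m,\vec U^{m+1},\Gamma^{m+1})$ and $\tfrac12(I^m_0\rho^{m-1}\vec I^m_2\vec U^m,\vec I^m_2\vec U^m)+\overline\gamma\,\mathcal{H}^{d-1}(\Gamma^m)=\mathcal{E}(I^m_0\rho^{m-1},\vec I^m_2\vec U^m,\Gamma^m)$, while recalling $\gamma(0)=\overline\gamma$, then gives precisely (\ref{eq:fdstabGD}).
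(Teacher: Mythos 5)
Your proposal is correct and follows essentially the same route as the paper's proof: the same test functions ($\vec\xi=\vec U^{m+1}$, $\varphi=P^{m+1}$, $\vec\chi=\overline\gamma\,\vec\kappa^{m+1}$, and $\vec\eta$ proportional to $\vec X^{m+1}-\vec\id$, your extra factor $\tfrac1\tau$ being absorbed when you multiply through by $\tau$), the same kinetic-energy splitting, the recombination via (\ref{eq:HGm}), and the key lower bound $\left\langle \nabs\,\vec X^{m+1}, \nabs\,(\vec X^{m+1}-\vec\id)\right\rangle_{\Gamma^m}\geq \mathcal{H}^{d-1}(\Gamma^{m+1})-\mathcal{H}^{d-1}(\Gamma^m)$ cited from \cite{triplej,gflows3d}. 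Your treatment of $\rho_\Gamma^{m+1}=0$ and of the cancellation in the curvature term is also exactly what the paper does, only spelled out in more detail.
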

\begin{proof}
It follows immediately from (\ref{eq:GDa0}) that $\rho_\Gamma^{m+1} = 0$.
Choosing $\vec\xi = \vec U^{m+1}$ in (\ref{eq:GDa}), 
$\varphi = P^{m+1}$ in (\ref{eq:GDb}), 
$\vec\chi = \overline\gamma\,\vec\kappa^{m+1}$ in (\ref{eq:GDc}) and
$\vec\eta=\overline\gamma\,(\vec X^{m+1}-\vec \id\!\mid_{\Gamma^m})$ 
in (\ref{eq:GDd}) yields that
\begin{align*}
& \tfrac12\left(\rho^m\,\vec U^{m+1}, \vec U^{m+1}\right)
+ \tfrac12\left((I^m_0\,\rho^{m-1})\,(\vec U^{m+1} - \vec I^m_2\,\vec U^m), 
\vec U^{m+1} - \vec I^m_2\,\vec U^m \right) 
\nonumber \\ & \quad
+ 2\,\tau\left(\mu^m\,\mat D(U^{m+1}), \mat D(U^{m+1}) \right)
+ 2\,\tau\,\overline\mu_\Gamma 
\left\langle \mat D_s^{m+1} (\vec\pi^{m+1}\,\vec U^{m+1}) , 
\mat D_s^{m+1} (\vec\pi^{m+1}\, \vec U^{m+1} ) \right\rangle_{\Gamma^{m+1}}
\nonumber \\ & \quad
+ \tau\,\overline\lambda_\Gamma 
 \left\langle \nabs\,.\, (\vec\pi^{m+1}\,\vec U^{m+1}) , \nabs\,.\, 
(\vec\pi^{m+1}\,\vec U^{m+1}) \right\rangle_{\Gamma^{m+1}}
\nonumber \\ & \quad
+ \overline\gamma\,
 \left\langle \nabs\,\vec X^{m+1}, \nabs\,(\vec X^{m+1} - \vec\id) 
\right\rangle_{\Gamma^m} 
\nonumber \\ & \hspace{1cm}
= \tfrac12\left((I^m_0\,\rho^{m-1})\,\vec I^m_2\,\vec U^{m}, \vec I^m_2\,\vec U^{m}\right)
+ \tau\left( \rho^m\,\vec f^{m+1}_1 + \vec f^{m+1}_2,
  \vec U^{m+1} \right).
\end{align*}
and hence (\ref{eq:fdstabGD}), on recalling (\ref{eq:HGm}), 
follows immediately, where we have used the result that
\begin{equation*}
\left\langle \nabs\,\vec X^{m+1}, \nabs\,(\vec X^{m+1} - \vec\id) 
\right\rangle_{\Gamma^m}
\geq \mathcal{H}^{d-1}(\Gamma^{m+1}) - \mathcal{H}^{d-1}(\Gamma^{m})
\end{equation*}
see e.g.\ \cite{triplej} and \cite{gflows3d} 
for the proofs for $d=2$ and $d=3$, respectively.
\end{proof}

In order to define a fully discrete equivalent of (\ref{eq:sdHGa0}--f), 
we introduce the matrix functions 
$\mat\Xi^m : \Wh \to [L^\infty(\Gamma^m)]^{d \times d}$ 
defined such that for all $z^h \in \Wh$ it holds that
\begin{equation} \label{eq:Xim}
\mat\Xi^m(z^h) \! \mid_{\sigma^m_j} \in \R^{d\times d} \quad\text{and}\quad
\mat\Xi^m(z^h) \, \nabs\,z^h
= \tfrac12\,\nabs\,\pi^m \,[|z^h|^2] \ \text{ on } \sigma^m_j\,,
j = 1,\ldots,J_\Gamma\,,
\end{equation}
which can be constructed in a fashion analogous to (\ref{eq:Xiho},b). 
We let $\mat\Xi^{-1} := \mat\Xi^0$, 
as well as $\vec U^{-1} := \vec U^0$ and $\pi^{-1} := \pi^0$.

Let $\Gamma^0$, an approximation to $\Gamma(0)$, 
and $\vec U^0\in \uspace^0$, 
$\kappa^0 \in W(\Gamma^0)$,
$\rho_\Gamma^0 \in W(\Gamma^0)$ and $\Psi^0 \in W(\Gamma^0)$ be given.
For $m=0\to M-1$, find $\vec U^{m+1} \in \uspace^m$, 
$P^{m+1} \in \widehat\pspace^m$, 
$\vec X^{m+1}\in\Vh$ and $\kappa^{m+1} \in \Wh$ such that
\begin{subequations}
\begin{align}
&
\tfrac12 \left( \frac{\rho^m\,\vec U^{m+1} - (I^m_0\,\rho^{m-1})
\,\vec I^m_2\,\vec U^m}{\tau}
+(I^m_0\,\rho^{m-1}) \,\frac{\vec U^{m+1}- \vec I^m_2\,\vec{U}^m}{\tau}, \vec \xi 
\right)
 \nonumber \\ & \
+ 2\left(\mu^m\,\mat D(\vec U^{m+1}), \mat D(\vec \xi) \right)
+ \tfrac12\left(\rho^m, 
 [(\vec I^m_2\,\vec U^m\,.\,\nabla)\,\vec U^{m+1}]\,.\,\vec \xi
- [(\vec I^m_2\,\vec U^m\,.\,\nabla)\,\vec \xi]\,.\,\vec U^{m+1} \right)
\nonumber \\ & \
- \left(P^{m+1}, \nabla\,.\,\vec \xi\right)
+ \frac1{\tau} \left\langle [\rho^m_\Gamma]_+\,\vec U^{m+1} 
+ [\rho^m_\Gamma]_-\,\vec I^m_2\,\vec U^{m} , \vec\xi
 \right\rangle_{\Gamma^m}^h
\nonumber \\ & \
+ 2 \left\langle \mu_\Gamma(\Psi^m)\, \mat D_s^m (\vec\pi^m\,\vec U^{m+1}) , 
\mat D_s^m ( \vec\pi^m\,\vec \xi ) \right\rangle_{\Gamma^m}^h
\nonumber \\ & \
+ \left\langle \lambda_\Gamma(\Psi^m) \,
\nabs\,.\,(\vec\pi^m\,\vec U^{m+1}) , 
\nabs\,.\, (\vec\pi^m\,\vec \xi) \right\rangle_{\Gamma^m}^h
\nonumber \\ & \
- \left\langle \gamma(0)\,(\kappa^{m+1} - \Pi_{m-1}^m\,\kappa^m)\,\vec\nu^m
+ \pi^m\,[\gamma_\epsilon(\Psi^m)\,\Pi_{m-1}^m\,\kappa^{m}]\,\vec\nu^m
,\vec \xi \right\rangle_{\Gamma^m}
\nonumber \\ & \
- \left\langle \nabs\,[\pi^m\,\gamma_\epsilon(\Psi^m)],
\vec \xi \right\rangle_{\Gamma^m}^h 
\nonumber \\ & \
= \left(\rho^m\,\vec f^{m+1}_1 + \vec f^{m+1}_2, \vec \xi\right)
+ \frac1{\tau} \left\langle [\rho^{m-1}_\Gamma]_+\,\vec I^m_2\,\vec U^m +
 [\rho^{m-1}_\Gamma]_-\,\vec I^m_2\,\vec U^{m-1} , 
\vec\Pi_m^{m-1}\,\vec \xi\!\mid_{\Gamma^m} \right\rangle_{\Gamma^{m-1}}^h
\nonumber \\ & \quad
- \sum_{i=1}^d \left\langle \rho_{\Gamma,\star}^{m-1}\left(
\frac{\vec X^m - \vec \id}{\tau} - \vec I^m_2\,\vec U^m \right) ,
\mat\Xi^{m-1}(\pi^{m-1}\,I^m_2\,U^m_i)\, \nabs\,(\pi^{m-1}\, \xi_i) 
\right\rangle_{\Gamma^{m-1}}^h
\nonumber \\ & \hspace{12cm} \forall\ \vec\xi \in \uspace^m \,, \label{eq:HGa}\\
& \left(\nabla\,.\,\vec U^{m+1}, \varphi\right)  = 0 
\quad \forall\ \varphi \in \widehat\pspace^m\,,
\label{eq:HGb} \\
&  \left\langle \frac{\vec X^{m+1} - \vec \id}{\tau} ,
\chi\,\vec\nu^m \right\rangle_{\Gamma^m}^h
= \left\langle \vec U^{m+1}, 
\chi\,\vec\nu^m \right\rangle_{\Gamma^m} 
 \quad\forall\ \chi \in \Wh\,,
\label{eq:HGc} \\
& \left\langle \kappa^{m+1}\,\vec\nu^m, \vec\eta \right\rangle_{\Gamma^m}^h
+ \left\langle \nabs\,\vec X^{m+1}, \nabs\,\vec \eta \right\rangle_{\Gamma^m}
 = 0  \quad\forall\ \vec\eta \in \Vh\,, \label{eq:HGd} 
\intertext{and set $\Gamma^{m+1} = \vec X^{m+1}(\Gamma^m)$. Here we have
recalled the definition (\ref{eq:pm}). Then find 
$\rho_\Gamma^{m+1} \in \Whp$ 
and 
$\Psi^{m+1} \in \Whp$ such that}
& \frac1{\tau}
\left\langle \rho_\Gamma^{m+1}, \chi^{m+1}_k \right\rangle_{\Gamma^{m+1}}^h
= \frac1{\tau}
\left\langle \rho_{\Gamma}^{m}, \chi^{m}_k \right\rangle_{\Gamma^m}^h 
 - \left\langle \rho_{\Gamma,\star}^m , 
\left(\frac{\vec X^{m+1} - \vec \id}{\tau} - \vec U^{m+1} \right) .\,
\nabs\,\chi^{m}_k \right\rangle_{\Gamma^m}^h
\nonumber \\ & \hspace{10cm}
\qquad\forall\ k \in \{1,\ldots,K_\Gamma\}\,,
\label{eq:HGa0}\\
& \frac1{\tau}
\left\langle \Psi^{m+1}, \chi^{m+1}_k \right\rangle_{\Gamma^{m+1}}^h 
+ \Ds\left\langle \nabs\, \Psi^{m+1}, \nabs\, \chi^{m+1}_k
\right\rangle_{\Gamma^{m+1}}
\nonumber \\ & \quad
= \frac1{\tau}
 \left\langle \Psi^{m}, \chi^{m}_k \right\rangle_{\Gamma^m}^h 
 - \left\langle \Psi^m_{\star,\epsilon}, \left( 
\frac{\vec X^{m+1} - \vec \id}{\tau} - \vec U^{m+1} \right) .\,
\nabs\,\chi^{m}_k \right\rangle_{\Gamma^m}^h
\quad\forall\ k \in \{1,\ldots,K_\Gamma\}\,,
\label{eq:HGe}
\end{align}
\end{subequations}
where $\Psi^m_{\star,\epsilon} = \Psi^m$ for $d=3$ and, 
on recalling (\ref{eq:Fdd}),
\begin{equation*} 
\Psi^m_{\star,\epsilon} = \begin{cases}
- \frac{\gamma_\epsilon(\Psi^m_k) - \gamma_\epsilon(\Psi^m_{k-1})}
{F'_\epsilon(\Psi^m_k) - F'_\epsilon(\Psi^m_{k-1})} &
F'_\epsilon(\Psi^m_{k-1}) \not= F'_\epsilon(\Psi^m_k)\,, \\
\frac12\,(\Psi^m_{k-1} + \Psi^m_k) 
& F'_\epsilon(\Psi^m_{k-1}) = F'_\epsilon(\Psi^m_k)\,,
\end{cases}
\quad\text{on}\quad [\vec q^m_{k-1}, \vec q^m_{k}]
\quad\forall\ k \in \{1,\ldots,K_\Gamma\}
\end{equation*}
for $d=2$, where $\Psi^m = \sum_{k=1}^{K_\Gamma} \Psi^m_k\,\chi^m_k$.
Moreover, on recalling (\ref{eq:rhohstar}), we set
\begin{equation} \label{eq:rhomstar}
\rho^m_{\Gamma,\star} = 
\begin{cases}
\frac1{\mathcal{H}^{d-1}(\sigma^m_j)}\,
\int_{\sigma^m_j} \rho^m_\Gamma  \dH{d-1} & \rho^m_\Gamma \geq 0\
\text{on}\ \overline{\sigma^m_j}\,,\\
0 & \min_{\overline{\sigma^m_j}} \rho^h_\Gamma < 0\,,
\end{cases}
\quad\text{on}\quad \sigma^m_j
\quad\forall\ j \in \{1,\ldots,J_\Gamma\} \,.
\end{equation}
We observe that (\ref{eq:HGa}--f) is a linear scheme in that
it leads to a linear system of equations for the unknowns 
$(\vec U^{m+1}, P^{m+1}, \vec X^{m+1}, \kappa^{m+1}, \rho^{m+1}_\Gamma,$ $
\Psi^{m+1})$ 
at each time level. In particular, the system (\ref{eq:HGa}--f) clearly
decouples into (\ref{eq:HGa}--d) for $(\vec U^{m+1}, P^{m+1},
\vec X^{m+1},\kappa^{m+1})$, (\ref{eq:HGa0}) for
$\rho_\Gamma^{m+1}$ and (\ref{eq:HGe}) for $\Psi^{m+1}$.

In order to prove the existence of a unique solution to (\ref{eq:HGa}--f) we
need to make the following very mild additional assumption.

\begin{itemize}
\item[$(\mathcal{B})$]
For $k= 1 , \ldots, K_\Gamma$, let
$\Theta_k^m:= \{\sigma^m_j : \vec{q}^m_k \in \overline{\sigma^m_j}\}$
and set
\begin{equation*}
\Lambda_k^m := \bigcup_{\sigma^m_j \in \Theta_k^m} \overline{\sigma^m_j}
 \qquad \mbox{and} \qquad
\vec\omega^m_k := \frac{1}{\mathcal{H}^{d-1}(\Lambda^m_k)}
\sum_{\sigma^m_j\in \Theta_k^m} \mathcal{H}^{d-1}(\sigma^m_j)
\;\vec{\nu}^m_j\,. 
\end{equation*}
Then we further assume that 
$\dim \spa\{\vec{\omega}^m_k\}_{k=1}^{K_\Gamma} = d$, $m=0,\ldots, M-1$.
\end{itemize}
We refer to \cite{triplej} and \cite{gflows3d} for more details and for an
interpretation of this assumption. 
Given the above definitions, we introduce the piecewise linear 
vertex normal function
$\vec\omega^m := \sum_{k=1}^{K_\Gamma} \chi^m_k\,\vec\omega^m_k \in \Vh$,
and note that 
\begin{equation} 
\left\langle \vec{v}, w\,\vec\nu^m\right\rangle_{\Gamma^m}^h =
\left\langle \vec{v}, w\,\vec\omega^m\right\rangle_{\Gamma^m}^h 
\qquad \forall\ \vec{v} \in \Vh\,,\ w \in \Wh \,.
\label{eq:NI}
\end{equation}

\begin{thm} \label{thm:BGN}
Let the assumptions $(\mathcal{A})$ and ($\mathcal{B}$) hold.
If the LBB condition {\rm (\ref{eq:LBB})} holds, then there exists a unique
solution $(\vec U^{m+1}, P^{m+1}, \vec X^{m+1}, \kappa^{m+1}) 
\in \uspace^m\times\widehat\pspace^m \times \Vh \times \Wh$ 
to {\rm (\ref{eq:HGa}--d)}. In all other
cases there exists a unique solution 
$(\vec U^{m+1} , \vec X^{m+1}, \kappa^{m+1})\in 
\uspace^m_0 \times \Vh \times \Wh$ to the
reduced system {\rm (\ref{eq:HGa},c,d)} with $\uspace^m$ replaced by
$\uspace^m_0$.
In either case, there exists a unique solution 
$(\rho_\Gamma^{m+1}, \Psi^{m+1})\in [\Whp]^2$ to 
{\rm (\ref{eq:GDa0},f)} that satisfies {\rm (\ref{eq:consm})}. 
\end{thm}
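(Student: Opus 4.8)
The plan is to follow the pattern of the proof of Theorem~\ref{thm:GD}. Since every system in {\rm (\ref{eq:HGa}--f)} is linear and, as already noted, decouples into {\rm (\ref{eq:HGa}--d)} for $(\vec U^{m+1},P^{m+1},\vec X^{m+1},\kappa^{m+1})$, the density equation {\rm (\ref{eq:HGa0})} for $\rho_\Gamma^{m+1}$, and the surfactant equation {\rm (\ref{eq:HGe})} for $\Psi^{m+1}$, existence will follow from uniqueness in each block, which in turn I would establish by showing that the associated homogeneous problem admits only the trivial solution. A point worth stressing at the outset is that in the homogeneous version of {\rm (\ref{eq:HGa})} the only surviving density coefficient in the interfacial inertia term is $[\rho^m_\Gamma]_+$, which is nonnegative by construction; this is precisely why, in contrast to Theorem~\ref{thm:GD}, no sign assumption on $\rho^m_\Gamma$ is required here.

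For the uniqueness of $(\vec U^{m+1},P^{m+1},\vec X^{m+1},\kappa^{m+1})$ I would test the homogeneous {\rm (\ref{eq:HGa})} with $\vec\xi=\vec U$, the homogeneous {\rm (\ref{eq:HGb})} with $\varphi=P$, the homogeneous {\rm (\ref{eq:HGc})} with $\chi=\gamma(0)\,\kappa$, and the homogeneous {\rm (\ref{eq:HGd})} with $\vec\eta=\tfrac{\gamma(0)}{\tau}\,\vec X$. The antisymmetric convective term then drops out, the pressure term cancels against {\rm (\ref{eq:HGb})}, and the curvature coupling $-\gamma(0)\langle\kappa\,\vec\nu^m,\vec U\rangle_{\Gamma^m}$ is rewritten, using {\rm (\ref{eq:HGc})} and then {\rm (\ref{eq:HGd})}, as the nonnegative quantity $\tfrac{\gamma(0)}{\tau}\langle\nabs\,\vec X,\nabs\,\vec X\rangle_{\Gamma^m}$. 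Invoking {\rm (\ref{eq:HGm})}, together with $\rho_\pm,\mu_\pm>0$, {\rm (\ref{eq:mulambda})}, $[\rho^m_\Gamma]_+\geq0$ and $\gamma(0)>0$, this produces a sum of nonnegative terms equal to zero. The positive-definite mass term forces $\vec U=\vec 0$, the LBB condition {\rm (\ref{eq:LBB})} then yields $P=0$ (for the reduced problem one instead works in $\uspace^m_0$ and obtains $\vec U=\vec 0$ directly), and the identity also gives $\nabs\,\vec X=\vec 0$, so that $\vec X$ is a constant vector on each connected component of $\Gamma^m$.

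The step I expect to be the crux is concluding $\vec X=\vec 0$ and $\kappa=0$ from here, since this is exactly where assumption $(\mathcal{B})$ and the vertex-normal identity {\rm (\ref{eq:NI})} enter, and where the scalar-curvature formulation departs from Theorem~\ref{thm:GD}. With $\vec U=\vec 0$, the homogeneous {\rm (\ref{eq:HGc})} reads $\langle\vec X,\chi\,\vec\nu^m\rangle^h_{\Gamma^m}=0$ for all $\chi\in\Wh$; rewriting the normal through the vertex normal via {\rm (\ref{eq:NI})} and testing with the nodal basis functions gives $\vec X\,.\,\vec\omega^m_k=0$ for every $k$, whence $\dim\spa\{\vec\omega^m_k\}=d$ from $(\mathcal{B})$ forces the constant $\vec X$ to vanish. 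Finally, with $\vec X=\vec 0$ the homogeneous {\rm (\ref{eq:HGd})} reduces to $\langle\kappa\,\vec\nu^m,\vec\eta\rangle^h_{\Gamma^m}=0$ for all $\vec\eta\in\Vh$; testing nodally and using {\rm (\ref{eq:NI})} once more yields $\kappa=0$. I would refer to \cite{triplej,gflows3d} for the geometric details underlying $(\mathcal{B})$ and the nondegeneracy of the $\vec\omega^m_k$ that this last step relies upon.

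It then remains to treat the two scalar blocks. Equation {\rm (\ref{eq:HGa0})} has a diagonal, mass-lumped left-hand side with strictly positive entries under $(\mathcal{A})$, so $\rho_\Gamma^{m+1}$ is uniquely determined once $\vec U^{m+1}$ and $\vec X^{m+1}$ are known, while {\rm (\ref{eq:HGe})} is a symmetric positive-definite system, the lumped mass matrix plus $\Ds$ times the stiffness matrix, giving a unique $\Psi^{m+1}$. The conservation identities {\rm (\ref{eq:consm})} then follow by summing {\rm (\ref{eq:HGa0})} and {\rm (\ref{eq:HGe})} over $k=1,\dots,K_\Gamma$: since $\sum_k\chi^m_k\equiv1$ one has $\nabs\sum_k\chi^m_k=\vec 0$, so the transport and diffusion contributions cancel, and the equality of the mass-lumped and exact integrals of piecewise linear functions, $\langle\,\cdot\,,1\rangle^h=\langle\,\cdot\,,1\rangle$, converts the resulting balance into the stated form.
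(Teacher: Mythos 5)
Your proposal is correct and follows exactly the route the paper intends: the paper's own proof is a two-line reference to the argument of Theorem~\ref{thm:GD} and to \citet[Theorem~4.1]{fluidfbp}, and your write-up supplies precisely those details — the energy test of the homogeneous system, the use of (\ref{eq:NI}) and assumption $(\mathcal{B})$ to pass from $\nabs\,\vec X=\vec 0$ to $\vec X=\vec 0$ and then $\kappa=0$, and the diagonal/SPD structure of the two scalar blocks. Your observation that only $[\rho^m_\Gamma]_+$ survives in the homogeneous interfacial inertia term, which is why no sign condition on $\rho^m_\Gamma$ is needed here in contrast to Theorem~\ref{thm:GD}, is accurate and consistent with the hypotheses of the two theorems.
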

\begin{proof}
The existence and uniqueness results for 
$(\vec U^{m+1}, P^{m+1}, \vec X^{m+1}, \kappa^{m+1})$ can be shown similarly to
the proof in Theorem~\ref{thm:GD}, and analogous to the proof in
\citet[Theorem~4.1]{fluidfbp}.
The results for $\rho_\Gamma^{m+1}$ and $\Psi^{m+1}$ can be shown exactly
as in the proof of Theorem~\ref{thm:GD}. 
\end{proof}

We remark that it does not appear possible to prove the analogues of
(\ref{eq:dmprho},c) for the scheme (\ref{eq:HGa}--f). 

\begin{thm} \label{thm:fdstabHG}
Let $\gamma$ be defined as in {\rm (\ref{eq:Fconst})}, let 
{\rm (\ref{eq:muconst})} hold, let $\rho_\Gamma^m=\rho_\Gamma^{m-1}=0$ 
and let 
$(\vec U^{m+1}, P^{m+1}, \vec X^{m+1}, \kappa^{m+1},\rho_\Gamma^{m+1})$
be a solution to {\rm (\ref{eq:HGa}--e)}. 
Then $\rho_\Gamma^{m+1} = 0$ and {\rm (\ref{eq:fdstabGD})} holds.
\end{thm}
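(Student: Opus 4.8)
The plan is to mirror the proof of Theorem~\ref{thm:fdstabGD} for the scheme (\ref{eq:GDa}--f), since (\ref{eq:HGa}--f) shares exactly the same bulk momentum, incompressibility and surface-viscosity terms, and differs only in the treatment of curvature and of the surface material density. First I would dispose of the density: because $\rho_\Gamma^m=0$ forces $\rho^m_{\Gamma,\star}=0$ in (\ref{eq:rhomstar}), both terms on the right-hand side of (\ref{eq:HGa0}) vanish, whence $\left\langle\rho_\Gamma^{m+1},\chi^{m+1}_k\right\rangle^h_{\Gamma^{m+1}}=0$ for every $k$ and thus $\rho_\Gamma^{m+1}=0$. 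Consequently every occurrence of $\rho_\Gamma^m$, $\rho_\Gamma^{m-1}$ and $\rho^m_{\Gamma,\star}$ in (\ref{eq:HGa}) drops out, and in particular the two XFEM-type cross terms involving $\mat\Xi^{m-1}$ disappear, so that the momentum equation reduces to precisely the no-surface-mass structure already analysed for (\ref{eq:GDa}).

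I would then test (\ref{eq:HGa}) with $\vec\xi=\vec U^{m+1}$ and (\ref{eq:HGb}) with $\varphi=P^{m+1}$, and multiply through by $\tau$. Exactly as in Theorem~\ref{thm:fdstabGD}, the skew-symmetric advection contribution cancels, the pressure terms cancel, and the discrete time-derivative block rearranges, via the polarization identity $(w(a-b),a)=\tfrac12(wa,a)-\tfrac12(wb,b)+\tfrac12(w(a-b),a-b)$ applied with the weights $\rho^m$ and $I^m_0\,\rho^{m-1}$, into the kinetic-energy difference hidden in the two $\mathcal{E}(\cdot)$ terms together with the nonnegative remainder $\tfrac12\left((I^m_0\,\rho^{m-1})(\vec U^{m+1}-\vec I^m_2\,\vec U^m),\vec U^{m+1}-\vec I^m_2\,\vec U^m\right)$. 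The surface-viscosity terms are split by (\ref{eq:HGm}) into the deviatoric $\mat{\hat D}^m_s$ part and the $(\lambda_\Gamma+\tfrac2{d-1}\,\mu_\Gamma)$ part, both nonnegative by (\ref{eq:mulambda}); these reproduce the corresponding lines of (\ref{eq:fdstabGD}) verbatim. I would simply cite Theorem~\ref{thm:fdstabGD} (and \citet[Theorem~4.1]{fluidfbp}) rather than re-derive this algebra.

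The only genuinely new bookkeeping is the curvature/surface-area term, and this is where I expect the main (though minor) obstacle to lie, namely in tracking the \emph{full} versus \emph{mass-lumped} inner products so that the standard chain closes. Under (\ref{eq:Fconst}) we have $\gamma\equiv\gamma_\epsilon\equiv\overline\gamma$, so $\nabs[\pi^m\gamma_\epsilon(\Psi^m)]=\vec 0$, and since $\Pi_{m-1}^m\kappa^m\in\Wh$ gives $\pi^m[\gamma_\epsilon(\Psi^m)\,\Pi_{m-1}^m\kappa^m]=\overline\gamma\,\Pi_{m-1}^m\kappa^m$, the two curvature pieces in (\ref{eq:HGa}) telescope to $-\overline\gamma\left\langle\kappa^{m+1}\,\vec\nu^m,\vec U^{m+1}\right\rangle_{\Gamma^m}$, carrying the full $L^2$ product. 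The right-hand side of (\ref{eq:HGc}) also carries $\left\langle\cdot,\cdot\right\rangle_{\Gamma^m}$, so taking $\chi=\kappa^{m+1}$ there converts this into $-\overline\gamma\left\langle(\vec X^{m+1}-\vec\id)/\tau,\kappa^{m+1}\vec\nu^m\right\rangle^h_{\Gamma^m}$, and this mass-lumped pairing is exactly the one appearing in (\ref{eq:HGd}); choosing $\vec\eta=(\vec X^{m+1}-\vec\id)/\tau\in\Vh$ there then yields $\tfrac{\overline\gamma}\tau\left\langle\nabs\vec X^{m+1},\nabs(\vec X^{m+1}-\vec\id)\right\rangle_{\Gamma^m}$. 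Invoking the inequality $\left\langle\nabs\vec X^{m+1},\nabs(\vec X^{m+1}-\vec\id)\right\rangle_{\Gamma^m}\geq\mathcal{H}^{d-1}(\Gamma^{m+1})-\mathcal{H}^{d-1}(\Gamma^m)$ from \cite{triplej,gflows3d} supplies the $\overline\gamma\,\mathcal{H}^{d-1}$ difference between the two $\mathcal{E}(\cdot)$ terms, and collecting all contributions gives (\ref{eq:fdstabGD}). The key verification is simply that the constant-$\gamma$ simplification leaves the curvature term in the single inner product for which (\ref{eq:HGc}) and (\ref{eq:HGd}) dovetail; everything else is a transcription of Theorem~\ref{thm:fdstabGD}.
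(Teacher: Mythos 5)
Your proposal is correct and follows essentially the same route as the paper: the paper's proof also gets $\rho_\Gamma^{m+1}=0$ directly from (\ref{eq:HGa0}) and then tests (\ref{eq:HGa}--d) with $\vec\xi=\vec U^{m+1}$, $\varphi=P^{m+1}$, $\chi=\overline\gamma\,\kappa^{m+1}$ and $\vec\eta=\overline\gamma\,(\vec X^{m+1}-\vec\id)$ (your choices differ only by harmless constant rescalings), reducing everything to the argument of Theorem~\ref{thm:fdstabGD}. Your explicit tracking of the full versus mass-lumped inner products, which is exactly why (\ref{eq:HGc}) and (\ref{eq:HGd}) chain together, is the one detail the paper leaves implicit, and you have it right.
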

\begin{proof}
It follows immediately from (\ref{eq:HGa0}) that $\rho_\Gamma^{m+1} = 0$.
Choosing $\vec\xi = \vec U^{m+1}$ in (\ref{eq:HGa}), 
$\varphi = P^{m+1}$ in (\ref{eq:HGb}), 
$\chi = \overline\gamma\,\kappa^{m+1}$ in (\ref{eq:HGc}) and
$\vec\eta=\overline\gamma\,({\vec X^{m+1}-\vec\id\!\mid_{\Gamma^m}})$ 
in (\ref{eq:HGd}) yields,
similarly to the proof of Theorem~\ref{thm:fdstabGD}, that (\ref{eq:fdstabGD})
holds.
\end{proof}

\begin{rem} \label{rem:oscm}
We may want to add numerical diffusion to {\rm (\ref{eq:HGa0})}, 
in order to avoid oscillations in $\rho_\Gamma^{m+1}$. Here we recall
{\rm Remark~\ref{rem:osch}}, and hence we would add the term 
$$ 
- \vartheta(h^m_\Gamma)\left\langle
\left| \mat {\mathcal{P}}_{\Gamma^m}
\left(\frac{\vec X^{m+1} - \vec\id}\tau - \vec U^{m+1}
\right)\right| \nabs\,\rho_\Gamma^{m}, \nabs\,
\chi_k^{m} \right \rangle_{\Gamma^{m}}^h$$ to the right hand side of
{\rm (\ref{eq:HGa0})}, and similarly the term
$$-\tfrac12\,\vartheta(h^m_\Gamma) \left\langle
\left| \mat {\mathcal{P}}_{\Gamma^m}
\left(\frac{\vec X^{m+1} - \vec \id}\tau - \vec U^{m+1}
\right)\right|
 \nabs\,\rho_\Gamma^{m}, \nabs\,
\pi^m\,[\vec U^m\,.\,\vec\xi] \right \rangle_{\Gamma^{m}}^h$$
to the right hand side of {\rm (\ref{eq:HGa})}. Here 
$h^m_\Gamma := \max_{j=1,\ldots,J_\Gamma} \diam{\sigma^m_j}$.
\end{rem}

\setcounter{equation}{0}
\section{Solution methods}  \label{sec:5}

As is standard practice for the solution of linear systems arising from
discretizations of Stokes and Navier--Stokes equations, we avoid the
complications of the constrained pressure space $\widehat\pspace^m$ in practice
by considering an overdetermined linear system with $\pspace^m$ instead. 
The assembly and the solution of the linear systems for the schemes
(\ref{eq:GDa}--f) and (\ref{eq:HGa}--f) at each time step are very similar to
the analogue procedures in \cite{fluidfbp,tpfs}, and so we omit most of the 
precise details here. 

\subsection{Assembly of bulk-interface cross terms}

In this subsection we give some more details about the assembly of the 
bulk-interface cross terms in (\ref{eq:GDa}--f) and (\ref{eq:HGa}--f) that are
new in this paper, and where the assembly is nontrivial.

For $\langle \rho_\Gamma^m\,\vec U^{m+1} , \vec\xi \rangle_{\Gamma^m}^h$,
with $\vec\xi\in\uspace^m$, we recall from (\ref{eq:defNI}) that
\begin{equation} \label{eq:rhoGammaNI}
\left\langle \rho_\Gamma^m\, \varphi^{\uspace^m}_l, \varphi^{\uspace^m}_i
 \right\rangle_{\Gamma^m}^h =
\tfrac1d \sum_{j=1}^{J_\Gamma} \mathcal{H}^{d-1}(\sigma^m_j)\,
\sum_{k=1}^{d} \rho_\Gamma^m(\vec{q}^m_{j_k})\,
\varphi^{\uspace^m}_l(\vec{q}^m_{j_k})
\,\varphi^{\uspace^m}_i(\vec{q}^m_{j_k})
\,,
\end{equation}
where $\{\vec{q}^m_{j_k}\}_{k=1}^{d}$ are the vertices of $\sigma^m_j$,
$j = 1,\ldots, J_\Gamma$, and
$\{\varphi_i^{\uspace^m}\}_{i=1}^{K_{\uspace}^m}$ 
denote the standard basis functions of $\uspace^m$.

\begin{algorithm}[H]
\caption{Calculate the matrix contributions for (\ref{eq:rhoGammaNI}). 
}
\label{algo:rhoGamma}
For all elements $\sigma^m$ of $\Gamma^m$ do \\
\quad For each vertex $\vec Q_i$ of $\sigma^m$, find the bulk element in which
$\vec Q_i$ lies and denote the \\ 
\quad local $S^m_2$ bulk basis functions on these elements with
$\varphi^{local,i}_k$, $k=1,\ldots,K$. \\
\quad For all $i=1,\ldots,d$ do \\
\quad \quad For all $k=1,\ldots,K$ do \\
\quad \quad \quad For all $l=1,\ldots,K$ do \\
\quad \quad \quad \quad Add 
$ \tfrac1d\, \mathcal{H}^{d-1}(\sigma^m)\, \rho_\Gamma^m(\vec Q_i) \,
\varphi_{k}^{local,i}(\vec Q_i)\,
\varphi_{l}^{local,i}(\vec Q_i)$ to
the contributions for \\ \quad \quad \quad \quad
$\left\langle \rho_\Gamma^m\, \varphi^{\uspace^m}_{global\_dof(k)}, 
\varphi^{\uspace^m}_{global\_dof(l)} \right\rangle_{\Gamma^m}^h$. \\
\quad \quad \quad end do \\
\quad \quad end do \\
\quad end do \\
end do
\end{algorithm}

In the above algorithm $\varphi_k^{local,i}$ 
is the hat-function for the local degree of freedom (DOF) $k$ 
on the element in which $\vec Q_i$ lies,
and $global\_dof(k)$ is a map that gives the global DOF in
$S^m_2$ for the local DOF $k$.

For $\left\langle \rho^{m-1}_\Gamma\,\vec I^m_2\,\vec U^{m},
 \vec\Pi_{m}^{m-1}\,\vec \xi\!\mid_{\Gamma^{m}} 
\right\rangle_{\Gamma^{m-1}}^h$, with $\vec\xi\in\uspace^m$, 
we note similarly that
\begin{equation} \label{eq:rhoGammaNI2}
\left\langle \rho_\Gamma^{m-1}\, \varphi^{\uspace^m}_l, 
\Pi_{m}^{m-1}\,\varphi^{\uspace^m}_i\!\mid_{\Gamma^{m}}
 \right\rangle_{\Gamma^{m-1}}^h =
\tfrac1d \sum_{j=1}^{J_\Gamma} \mathcal{H}^{d-1}(\sigma^{m-1}_j)\,
\sum_{k=1}^{d} \rho_\Gamma^{m-1}(\vec{q}^{m-1}_{j_k})\,
\varphi^{\uspace^m}_l(\vec{q}^{m-1}_{j_k})
\,\varphi^{\uspace^m}_i(\vec{q}^{m}_{j_k})
\,.
\end{equation}

\begin{algorithm}[H]
\caption{Calculate the matrix contributions for (\ref{eq:rhoGammaNI2}). 
}
\label{algo:rhoGamma2}
For all elements $\sigma^{m-1}$ of $\Gamma^{m-1}$ do \\
\quad For all $i=1,\ldots,d$ do \\
\quad\quad For each vertex $\vec Q_i^{m-1}$ of $\sigma^{m-1}$, 
find the bulk element in which $\vec Q_i^{m-1}$ lies and \\  
\quad \quad denote the local $S^m_2$ bulk basis functions on this element with
$\varphi^{local,i}_k$, \\ \quad \quad $k=1,\ldots,K$. 
Similarly, let $\widetilde\varphi^{local,i}_k$, $k = 1,\ldots, K$,
denote the local basis \\ \quad \quad 
functions on the element in which the vertex 
$\vec Q^{m}_i$ of $\sigma^{m}$ lies. \\
\quad \quad For all $k=1,\ldots,K$ do \\
\quad \quad \quad For all $l=1,\ldots,K$ do \\
\quad \quad \quad \quad Add 
$\tfrac1d\,\mathcal{H}^{d-1}(\sigma^{m-1})\,\rho_\Gamma^{m-1}(\vec Q^{m-1}_i)\,
\varphi_{k}^{local,i}(\vec Q^{m-1}_i)\,
\widetilde\varphi_{l}^{local,i}(\vec Q_i^{m})$ to the 
\\ \quad \quad \quad \quad
contributions for 
$\left\langle \rho_\Gamma^{m-1}\, \varphi^{\uspace^m}_{global\_dof(k)}, 
\Pi_{m}^{m-1}\,\varphi^{\uspace^m}_{global\_dof(l)}\!\mid_{\Gamma^{m}}
\right\rangle_{\Gamma^{m-1}}^h$. \\
\quad \quad \quad end do \\
\quad \quad end do \\
\quad end do \\
end do
\end{algorithm}

For the scheme (\ref{eq:HGa}--f) we note that for the terms
\begin{equation} \label{eq:Ximterm}
\left\langle\rho_{\Gamma,\star}^{m-1}\left(
 \frac{\vec X^{m} - \vec \id}{\tau} - \vec I^m_2\,\vec U^{m} \right),
\mat\Xi^{m-1}(\pi^{m-1}\,I^m_2\, U^{m}_i)\, \nabs\,(\pi^{m-1}\, \xi_i) 
\right\rangle_{\Gamma^{m-1}}^h,
\end{equation}
for $i=1,\ldots,d$, where $\vec\xi = (\xi_1,\ldots,\xi_d)^T\in\uspace^m$,
we need to consider the matrix entries
\begin{subequations}
\begin{equation} \label{eq:rhodXXinabs}
\left\langle\rho_{\Gamma,\star}^{m-1}\, \chi^{m-1}_k ,  
\mat\Xi^{m-1}(\pi^{m-1}\,I^m_2\, U^m_r)\,
\nabs\,(\pi^{m-1}\, \varphi^{\uspace^m}_i) 
\right\rangle_{\Gamma^{m-1}}^h
\end{equation}
and
\begin{equation} \label{eq:rhoUXinabs}
\left\langle\rho_{\Gamma,\star}^{m-1}\, \varphi^{\uspace^m}_j ,  
\mat\Xi^{m-1}(\pi^{m-1}\,I^m_2\,U^m_r)\,
\nabs\,(\pi^{m-1}\, \varphi^{\uspace^m}_i) 
\right\rangle_{\Gamma^{m-1}}^h .
\end{equation}
\end{subequations}
Here and throughout $\{\chi^m_k\}_{k=1}^{K^m_\Gamma}$ denotes the standard 
basis of $\Wh$, $m=0,\ldots,M-1$.

\begin{algorithm}[H]
\caption{Calculate the matrix contributions for (\ref{eq:rhodXXinabs}). 
}
\label{algo:rhodXXinabs}
For all elements $\sigma^{m-1}$ of $\Gamma^{m-1}$ do \\
\quad Compute 
$\vec G_{j} = \nabs\,\chi^{m-1}_{Q_j}$,
$j=1,\ldots,d$ for the $d$ vertices $\vec Q_1,\ldots,\vec Q_d$ of 
$\sigma^{m-1}$. \\
\quad Let $\mat M \in \R^{d \times d}$ be defined by its $d$ columns
$\vec Q_j - \vec Q_1$, $j= 2 \to d$, and $\vec \nu^{m-1}\!\mid_{\sigma^{m-1}}$.
\\ 
\quad Let $\mat{\hat\Lambda} \in \R^{d\times d}$ be the diagonal matrix with
diagonal entries \\ \quad
$\tfrac12\,((I^m_2\,U^m_r)(\vec Q_1) + (I^m_2\,U^m_r)(\vec Q_j))$, 
$j= 2 \to d$, and $0$. \\ 
\quad Define $\mat\Lambda = (\mat M^T)^{-1}\,\mat{\hat\Lambda}\,\mat M^T$. \\
\quad For each vertex $\vec Q_i$ of $\sigma^{m-1}$, 
find the bulk element in which $\vec Q_i$ lies and denote the \\ 
\quad local $S^m_2$ bulk basis functions on these elements with
$\varphi^{local,i}_k$, $k=1,\ldots,K$. \\
\quad For all $i=1,\ldots,d$ do \\
\quad \quad For all $j=1,\ldots,d$ do \\
\quad \quad \quad For all $l=1,\ldots,K$ do \\
\quad \quad \quad \quad Add 
$\tfrac1d\,\mathcal{H}^{d-1}(\sigma^{m-1})\,\rho^{m-1}_{\Gamma,\star}\,
\varphi_{l}^{local,j}(\vec Q_j)\,\mat\Lambda\,\vec G_{j}$ to the contributions 
for \\ \quad \quad \quad \quad  
$\left\langle\rho^{m-1}_{\Gamma,\star}\, \chi^{m-1}_{global\_dof(i)} ,
\mat\Xi^{m-1}(\pi^{m-1}\,I^m_2\,U^m_r)\,
 \nabs\, (\pi^{m-1}\,\varphi^{\uspace^m}_{global\_dof(l)}) 
\right\rangle_{\Gamma^{m-1}}^h$.
 \\
\quad \quad \quad \quad end do \\
\quad \quad end do \\
\quad end do \\
end do
\end{algorithm}
\begin{algorithm}[H]
\caption{Calculate the matrix contributions for (\ref{eq:rhoUXinabs}). 
}
\label{algo:rhoUXinabs}
For all elements $\sigma^{m-1}$ of $\Gamma^{m-1}$ do \\
\quad Compute 
$\vec G_{j} = \nabs\,\chi^{m-1}_{Q_j}$,
$j=1,\ldots,d$ for the $d$ vertices $\vec Q_1,\ldots,\vec Q_d$ of 
$\sigma^{m-1}$. \\
\quad Let $\mat M \in \R^{d \times d}$ be defined by its $d$ columns
$\vec Q_j - \vec Q_1$, $j= 2 \to d$, and $\vec \nu^{m-1}\!\mid_{\sigma^{m-1}}$.
\\ 
\quad Let $\mat{\hat\Lambda} \in \R^{d\times d}$ be the diagonal matrix with
diagonal entries \\ \quad
$\tfrac12\,((I^m_2\,U^m_r)(\vec Q_1) + (I^m_2\,U^m_r)(\vec Q_j))$, 
$j= 2 \to d$, and $0$. \\ 
\quad Define $\mat\Lambda = (\mat M^T)^{-1}\,\mat{\hat\Lambda}\,\mat M^T$. \\
\quad For each vertex $\vec Q_i$ of $\sigma^{m-1}$, 
find the bulk element in which $\vec Q_i$ lies and denote the \\ 
\quad local $S^m_2$ bulk basis functions on these elements with
$\varphi^{local,i}_k$, $k=1,\ldots,K$. \\
\quad For all $i=1,\ldots,d$ do \\
\quad \quad For all $j=1,\ldots,d$ do \\
\quad \quad \quad For all $k=1,\ldots,K$ do \\
\quad \quad \quad \quad For all $l=1,\ldots,K$ do \\
\quad \quad \quad \quad \quad Add 
$\tfrac1d\,\mathcal{H}^{d-1}(\sigma^{m-1})\,\rho^{m-1}_{\Gamma,\star}\,
\varphi_{k}^{local,i}(\vec Q_i)\,
\varphi_{l}^{local,j}(\vec Q_j)\,\mat\Lambda\,\vec G_{j}$ to the \\
\quad \quad \quad \quad \quad 
 contributions for  \\ 
\mbox{\quad \quad \quad \quad \quad 
$\left\langle\rho^{m-1}_{\Gamma,\star}\, \varphi^{\uspace^m}_{global\_dof(k)},
\mat\Xi^{m-1}(\pi^{m-1}\,I^m_2\,U^m_r)\,
 \nabs\, (\pi^{m-1}\,\varphi^{\uspace^m}_{global\_dof(l)}) 
\right\rangle_{\Gamma^{m-1}}^h$}.
 \\
\quad \quad \quad \quad end do \\
\quad \quad \quad end do \\
\quad \quad end do \\
\quad end do \\
end do
\end{algorithm}

The remaining new terms are
\begin{align} \label{eq:assembleml}
2 \left\langle \mu_\Gamma(\Psi^m)\, \mat D_s^m (\vec\pi^m\,\vec U^{m+1}) , 
\mat D_s^m ( \vec\pi^m\,\vec \xi ) \right\rangle_{\Gamma^m}^h
\quad\text{and}\quad
\left\langle\lambda_\Gamma(\Psi^m) \,
 \nabs\,.\,(\vec\pi^m\,\vec U^{m+1}) , 
\nabs\,.\, (\vec\pi^m\,\vec \xi) \right\rangle_{\Gamma^m}^h 
\end{align}
in (\ref{eq:GDa}), where $\vec\xi\in\uspace^m$. 
For an element $\sigma^m \subset \Gamma^m$ let
$\{\vec t_j\}_{j=1}^{d-1} \cup \{\vec\nu^m\}$ be an ONB of $\R^d$. 
Then it holds in the case $d=2$ that
\begin{subequations}
\begin{align}
& 2
\left(\left\langle \mu_\Gamma(\Psi^m)\,
\mat D_s^m (\pi^m\, \varphi^{\uspace^m}_j\,\vec\ek_k) , 
\mat D_s^m ( \pi^m\, \varphi^{\uspace^m}_i\,\vec\ek_l ) 
\right\rangle_{\sigma^m}^h \right)_{k,l=1}^d \nonumber \\ & \qquad
= 2\, \left\langle \mu_\Gamma(\Psi^m), 1 \right\rangle_{\sigma^m}^h
 \partial_{\vec t_1} (\pi^m\, \varphi^{\uspace^m}_j)
\,  \partial_{\vec t_1} (\pi^m\, \varphi^{\uspace^m}_i) 
\, \vec t_1 \otimes \vec t_1  \nonumber \\ & \qquad
=: 2\, \left\langle \mu_\Gamma(\Psi^m), 1 \right\rangle_{\sigma^m}^h
 \mat L (\pi^m\, \varphi^{\uspace^m}_j, \pi^m\, \varphi^{\uspace^m}_i )
\,. 
\label{eq:mgt2d}
\end{align}
Similarly, it holds in the case $d=3$ that
\begin{align}
& 2
\left(\left\langle \mu_\Gamma(\Psi^m)\,
\mat D_s^m (\pi^m\, \varphi^{\uspace^m}_j\,\vec\ek_k) , 
\mat D_s^m ( \pi^m\, \varphi^{\uspace^m}_i\,\vec\ek_l ) 
\right\rangle_{\sigma^m}^h \right)_{k,l=1}^d \nonumber  \\ & \qquad
= \left\langle \mu_\Gamma(\Psi^m), 1 \right\rangle_{\sigma^m}^h
 \left[ \vphantom{\sum_{b=1}^d}
 \partial_{\vec t_1} (\pi^m\, \varphi^{\uspace^m}_j)\,
 \partial_{\vec t_1} (\pi^m\, \varphi^{\uspace^m}_i) 
\, \vec t_2 \otimes \vec t_2
+  \partial_{\vec t_2} (\pi^m\, \varphi^{\uspace^m}_j)\,
 \partial_{\vec t_1} (\pi^m\, \varphi^{\uspace^m}_i) 
\, \vec t_1 \otimes \vec t_2  \right. \nonumber \\ & \qquad \qquad \left.
+ \partial_{\vec t_1} (\pi^m\, \varphi^{\uspace^m}_j)\,
 \partial_{\vec t_2} (\pi^m\, \varphi^{\uspace^m}_i) 
\, \vec t_2 \otimes \vec t_1
+ \partial_{\vec t_2} (\pi^m\, \varphi^{\uspace^m}_j)\,
 \partial_{\vec t_2} (\pi^m\, \varphi^{\uspace^m}_i) 
\, \vec t_1 \otimes \vec t_1 \right. \nonumber \\ & \qquad \qquad \left.
+ 2 \sum_{b=1}^2 
\partial_{\vec t_b} (\pi^m\, \varphi^{\uspace^m}_j)
\,  \partial_{\vec t_b} (\pi^m\, \varphi^{\uspace^m}_i) 
\, \vec t_b \otimes \vec t_b \right] \nonumber \\ & \qquad
=: 2\, \left\langle \mu_\Gamma(\Psi^m), 1 \right\rangle_{\sigma^m}^h
 \mat L (\pi^m\, \varphi^{\uspace^m}_j, \pi^m\, \varphi^{\uspace^m}_i )
\,.  \label{eq:mgt3d}
\end{align}
\end{subequations}

Moreover, we have that
\begin{align}
&
\left(\left\langle \lambda_\Gamma(\Psi^m) \,
\nabs\,.\, (\pi^m\, \varphi^{\uspace^m}_j\,\vec\ek_k) , 
\nabs\,.\, ( \pi^m\, \varphi^{\uspace^m}_i\,\vec\ek_l ) 
\right\rangle_{\sigma^m}^h \right)_{k,l=1}^d \nonumber \\ & \qquad
= \left(\left\langle \lambda_\Gamma(\Psi^m) \,
\nabs\, (\pi^m\, \varphi^{\uspace^m}_j)\,.\,\vec\ek_k , 
\nabs\,( \pi^m\, \varphi^{\uspace^m}_i)\,.\,\vec\ek_l 
\right\rangle_{\sigma^m}^h \right)_{k,l=1}^d \nonumber \\ & \qquad
=  \left\langle \lambda_\Gamma(\Psi^m), 1
\right\rangle_{\sigma^m}^h
\nabs\, (\pi^m\, \varphi^{\uspace^m}_j) \otimes 
\nabs\,( \pi^m\, \varphi^{\uspace^m}_i)  \,. \label{eq:lgt}
\end{align}

\begin{algorithm}[H]
\caption{Calculate the matrix contributions for (\ref{eq:assembleml}). 
}
\label{algo:ml}
For all elements $\sigma^m$ of $\Gamma^m$ do \\
\quad Compute 
$\vec S_{i} = \nabs\,\chi^m_{Q_i}$,
$i=1,\ldots,d$ for the $d$ vertices $\vec Q_1,\ldots,\vec Q_d$ \\ 
\quad of $\sigma^m$. \\
\quad Compute $\mat K_{ij} = 
\left\langle \lambda_\Gamma(\Psi^m) , 1 
\right\rangle_{\sigma^m}^h\,\vec S_i \otimes \vec S_j
+ 2\, \left\langle \mu_\Gamma(\Psi^m), 1 \right\rangle_{\sigma^m}^h
 \mat L (\chi^m_{Q_i}, \chi^m_{Q_j} )$, \\ \quad
$i,j=1,\ldots,d$. \\
\quad For each vertex $\vec Q_i$ of $\sigma^m$, find the bulk element in which
$\vec Q_i$ lies and denote the \\ 
\quad local $S^m_2$ bulk basis functions on these elements with
$\varphi^{local,i}_k$, $k=1,\ldots,K$. \\
\quad For all $i=1,\ldots,d$ do \\
\quad \quad For all $j=1,\ldots,d$ do \\
\quad \quad \quad For all $k=1,\ldots,K$ do \\
\quad \quad \quad \quad For all $l=1,\ldots,K$ do \\
\quad \quad \quad \quad \quad Add 
$\varphi_{k}^{local,i}(\vec Q_i)\,
\varphi_{l}^{local,j}(\vec Q_j)\,\mat K_{ij}$ to
the contributions for \\ \quad \quad \quad \quad \quad 
$ \left(
\left\langle \lambda_\Gamma(\Psi^m) \,
\nabs\,.\,(\varphi_{global\_dof(k)}^{\uspace^m}\,
\vec \ek_r), \nabs\,.\,(\varphi_{global\_dof(l)}^{\uspace^m}\,
\vec \ek_s) \right\rangle_{\Gamma^m}^h \right)_{r,s=1}^d$ 
\\ \quad \quad \quad \quad \quad 
$ + 2
\left(\left\langle \mu_\Gamma(\Psi^m)\,
\mat D_s^m (\pi^m\, \varphi_{global\_dof(k)}^{\uspace^m}\,\vec\ek_r) , 
\mat D_s^m ( \pi^m\, \varphi_{global\_dof(l)}^{\uspace^m}\,\vec \ek_s ) 
\right\rangle_{\Gamma^m}^h \right)_{r,s=1}^d$. \\
\quad \quad \quad \quad end do \\
\quad \quad \quad end do \\
\quad \quad end do \\
\quad end do \\
end do
\end{algorithm}

\subsection{Inhomogenous boundary data} \label{sec:37}
With a view towards some numerical test cases in Section~\ref{sec:6},
we also allow for 
an inhomogeneous Dirichlet boundary condition $\vec g$ on $\partial\Omega$
and for ease of exposition
consider only piecewise quadratic velocity approximations.
Then we reformulate e.g.\ 
\mbox{(\ref{eq:HGa}--d)} as follows.
Find $\vec U^{m+1} \in \uspace^m(\vec g) :=
\{\vec U \in [S^m_2]^d : \vec U = \vec I^m_2\,\vec g\ 
\text{on}\ \partial\Omega\}$,
$P^{m+1} \in \pspace^m$, $\vec X^{m+1}\in\Vh$ and 
$\kappa^{m+1} \in \Wh$ such that (\ref{eq:HGa},c,d)
with $\uspace^m = [S^m_2]^d \cap \uspace$ hold together with
\begin{equation} \label{eq:LAb}
 \left(\nabla\,.\,\vec U^{m+1}, \varphi\right) = 
\frac{\left(\varphi, 1\right)}{\mathcal{L}^d(\Omega)}\, 
\int_{\partial\Omega} (\vec I^m_2\,\vec g) \,.\, \unitn \dH{d-1} 
\quad \forall\ \varphi \in \pspace^m\,.
\end{equation}
If $(\uspace^m, \pspace^m)$ satisfy the LBB condition (\ref{eq:LBB}), then
the existence and uniqueness proof for a solution to (\ref{eq:HGa},c,d), 
(\ref{eq:LAb}) is as before. In the absence of (\ref{eq:LBB}), the existence
and uniqueness of a solution to the reduced system that is analogous to 
(\ref{eq:HGa},c,d), with $\uspace^m$ replaced by $\uspace^m_0$,
hinges on the nonemptiness of the set
$\uspace^m_0(\vec g) := 
\{ \vec U \in \uspace^m(\vec g) : (\nabla\,.\,\vec U, \varphi) = 0 \quad
\forall\ \varphi \in \widehat\pspace^m \}$.

\setcounter{equation}{0}
\section{Numerical results}  \label{sec:6}

For the bulk mesh adaptation we use the strategy from
\cite{fluidfbp}, which results in a fine mesh size
$h_f$ around $\Gamma^m$ and a coarse mesh size $h_c$ 
further away from it. 
Here $h_{f} = \frac{2\,\min\{H_1,H_2\}}{N_{f}}$ and 
$h_{c} =  \frac{2\,\min\{H_1,H_2\}}{N_{c}}$
are given by two integer numbers $N_f >  N_c$, where we assume from now on that
the convex hull of 
$\Omega$ is given by $\times_{i=1}^d (-H_i,H_i)$. 
We remark that we implemented the schemes (\ref{eq:GDa}--f) and
(\ref{eq:HGa}--f) with the help of
the finite element toolbox ALBERTA, see \cite{Alberta}.

For the scheme (\ref{eq:HGa}--f) we fix $\epsilon = 10^{-8}$, and in all our
numerical experiments presented in this section
the discrete surfactant concentration $\Psi^m$ remained above 
$\epsilon$ throughout the evolution, so that $\gamma_\epsilon(\Psi^m) =
\gamma(\Psi^m)$, recall (\ref{eq:geps}). 
Similarly, the discrete surface material density $\rho_\Gamma^m$ always
remained nonnegative in all our numerical simulations.
Unless otherwise stated we use the linear equation of state (\ref{eq:gamma1}) 
for the surface tension, and for the numerical simulations without surfactant 
we set $\beta = 0$ in (\ref{eq:gamma1}). Similarly, we set the 
numerical diffusion in (\ref{eq:HGa0}) to be zero, 
$\vartheta(s) = 0$ for all $s\in \R$, unless otherwise stated. 
We set $\Psi^0 = \psi_0 = 1$
and $\rho_\Gamma^0 = \rho_{\Gamma,0} = 1$, unless
stated otherwise.
In addition, we employ the lowest order
Taylor--Hood element P2--P1 in all computations and
set $\vec U^0 = \vec I^0_2\,\vec u_0$, where $\vec u_0 = \vec 0$
unless stated otherwise.
For the initial interface we always choose a circle/sphere of radius $R_0$ 
and set
$\kappa^0 = -\frac{d-1}{R_0}$ for the scheme (\ref{eq:HGa}--f). For the scheme
(\ref{eq:GDa}--f) we let $\vec\kappa^0 \in \Vhz$ be the solution of 
(\ref{eq:GDd}) with $m$ and $m+1$ replaced by zero.
To summarize the discretization parameters we use the shorthand notation
$n\,{\rm adapt}_{k,l}$ from \cite{fluidfbp}. 
The subscripts refer to the fineness of the spatial discretizations, i.e.\
for the set $n\,{\rm adapt}_{k, l}$ it holds that 
$N_f = 2^k$ and $N_c = 2^l$. For the case $d=2$ we have in addition that 
$K_\Gamma = J_\Gamma = 2^k$, while for $d=3$ it holds that
$(K_\Gamma, J_\Gamma) = (1538, 3072)$ for $k = 5$. 
Finally, the uniform time step size 
for the set $n\,{\rm adapt}_{k,l}$ is given by $\tau = 10^{-3} / n$,
and if $n=1$ we write ${\rm adapt}_{k, l}$.

\subsection{Convergence experiments}
In order to test our finite element approximations, we
consider the true solution of an expanding circle/sphere,
as it has been considered e.g.\ \cite{spurious} for the special case
$\rho = \rho_{\Gamma,0}=0$, (\ref{eq:Fconst}) and (\ref{eq:muconst}) 
with $\overline\mu_\Gamma = \overline\lambda_\Gamma = 0$, so that the model
(\ref{eq:NSa}--e), (\ref{eq:sigma}), (\ref{eq:1c}--c),
(\ref{eq:sigmaG}), (\ref{eq:1surf}) collapses to
(\ref{eq:NSa}--e) with $\rho=0$, 
$[\mat\sigma\,\vec \nu]_-^+ = -\gamma\,\varkappa\,\vec\nu$ and
(\ref{eq:1d}).

Throughout this subsection we only consider the case that both 
(\ref{eq:Fconst}) 
and (\ref{eq:muconst}) hold, and that $\partial_1\Omega = \partial\Omega$
and $\vec f_2 = \vec 0$. A nontrivial 
divergence free and radially symmetric solution $\vec u$ can be
constructed on a domain that does not contain the origin. To this end, consider
e.g.\
$\Omega = (-H,H)^d \setminus [-H_0, H_0]^d$, with $0 < H_0 < H$. Then
$\Gamma(t) := \{ \vec z \in \R^d : |\vec z| = r(t)\}$, where
\begin{subequations}
\begin{equation} \label{eq:radialr2}
r(t) = ([r(0)]^d + \alpha\,t\,d)^\frac1d \,,
\end{equation}
together with 
\begin{equation} \label{eq:radialup2}
\vec u(\vec z, t) = \alpha\,|\vec z|^{-d}\,\vec z \,, \quad
p(\vec z, t) = \theta(t)\left[ \charfcn{\Omega_-(t)} - 
\frac{\mathcal{L}^d(\Omega_-(t))}{\mathcal{L}^d(\Omega)}\right],\quad
\rho_\Gamma(\vec z,t) = \left[ \frac{r(0)}{r(t)} \right]^{d-1}
\overline\rho_{\Gamma,0}
\,,
\end{equation}
\end{subequations}
where $\overline\rho_{\Gamma,0} \in \Rgeq$ and
\begin{align*}
\theta(t) &= \left[\overline\gamma + \frac\alpha{[r(t)]^{d}} \left(
2\,\overline\mu_\Gamma + (d-1)\,\overline\lambda_\Gamma 
\right)  
- \alpha^2 \left[\frac{r(0)}{[r(t)]^3} \right]^{d-1} \overline\rho_{\Gamma,0} 
\right] \frac{d-1}{r(t)} \\ & \quad + 
2\,\alpha\,\frac{d-1}{[r(t)]^{d}}\,(\mu_+ - \mu_-)\,,
\end{align*}
is an exact solution to the problem 
(\ref{eq:NSa}--e), (\ref{eq:sigma}), (\ref{eq:1c}--c), 
(\ref{eq:sigmaG}) with 
$\vec f_1(\vec z, t) = \alpha^2\,(1-d)\,\vec z \,|\vec z|^{-2\,d}$ and
with the homogeneous right
hand side in (\ref{eq:NSc}) replaced by $\vec g$, where 
$\vec g(\vec z) = \alpha\,|\vec z|^{-d}\,\vec z$.

We perform convergence experiments for the solution (\ref{eq:radialr2},b)
for the case $d=2$. In particular, 
we fix 
$\Omega = (-1,1)^2 \setminus [-\frac13,\frac13]^2$ and use the parameters
$$
\alpha = 0.15 \quad\text{and}\quad \rho=0\,,\quad
\mu = \overline\mu_\Gamma =
\overline\lambda_\Gamma = \overline\gamma = \rho_{\Gamma,0} = 1
$$
for the true solution (\ref{eq:radialr2},b)
and set $\Gamma(0) = \{ \vec z \in \R^d : |\vec z| = \frac12 \}$.

With $T=1$ we obtain that $\Gamma(T)$ is a circle of radius
$r(1) = \sqrt{0.55} \approx 0.742$. Some errors for the approximation
(\ref{eq:GDa}--f), where we use uniform bulk meshes with $h_c = h_f = h$
and $h^m_\Gamma \approx h/3$, 
are shown in Table~\ref{tab:GDrho}.
Here we define the errors 
$$\errorXx :=
\max_{m=1,\ldots, M} \|\vec X^m - \vec{x}(\cdot,t_m)\|_{L^\infty}\,,$$ 
where $\|\vec X(t_m) - \vec{x}(\cdot,t_m)\|_{L^\infty}
:=\max_{k=1,\ldots, K_\Gamma}
\left\{\min_{\vec{y}\in \Upsilon} |\vec{q}^m_k - 
\vec{x}(\vec{y},t_m)|\right\}$ and
$$\errorUu2 :=
\max_{m=1,\ldots, M}\|U^m - \vec I^m_2\,u(\cdot,t_m)\|_{L^\infty(\Omega)}\,.$$
In order to evaluate the errors in the pressure, we define
$
\LerrorPpc :=[\tau\,\sum_{m=1}^M \|P^m_c - p_c(\cdot,t_m)
\|_{L^2(\Omega)}^2 ]^\frac12
$
and $\LerrorLl := [\tau\,\sum_{m=1}^M |\theta^m - \theta(t_m)|^2]^\frac12$. 
Here
$p_c(\cdot,t_m):= p(\cdot,t_m)-\theta(t_m)\,\charfcn{\Omega_-(t_m)} \in \R$
for the test problem (\ref{eq:radialr2},b),
and $P^{m}_c := P^{m} - \theta^{m}\,\charfcn{\Omega^{m-1}_-}$
is piecewise polynomial on $\mathcal{T}^{m-1}$.
\begin{table}
\center
\begin{tabular}{llllll}
\hline
 $1/h$ & $\tau$ & $\errorXx$ & $\errorUu2$ & $\LerrorPpc$ 
 & $\LerrorLl$ \\ 
 \hline 
   3  & $10^{-2}$ & 5.6209e-03 & 1.2984e-01 & 5.7124e-01 & 8.2619e-01 \\
   6  & $10^{-3}$ & 5.8122e-04 & 4.7725e-02 & 7.0856e-02 & 5.5830e-02 \\
   12 & $10^{-4}$ & 7.3525e-05 & 2.5878e-02 & 2.1928e-02 & 1.7614e-02 \\
\hline
\end{tabular}
\caption{($\alpha = 0.15$,
$\mu=\overline\gamma=\overline\mu_\Gamma=\overline\lambda_\Gamma=\overline\rho_{\Gamma,0}=1$)
Expanding bubble problem on $(-1,1)^2 \setminus [-\frac13,\frac13]^2$ 
over the time interval $[0,1]$ 
for the P2--P1 element with \XFEMGAMMA\ for the scheme (\ref{eq:GDa}--f).
Here we use uniform meshes.}
\label{tab:GDrho}
\end{table}%

In Table~\ref{tab:GDrho} the convergence in $\errorUu2$ appears to be very slow.
It is for this reason that we repeat the convergence experiment also on a
sequence of refined bulk meshes. Here we use adaptively refined grids with
$h_f = h_c / 8$ and $h_\Gamma \approx h_c / 12 = \tfrac23\,h_f$. 
The corresponding errors can be found in Table~\ref{tab:GDrhor2},
where now the error $\errorUu2$ appears to converge with an improved rate.
\begin{table}
\center
\begin{tabular}{llllll}
\hline
 $1/h_{f}$ & $\tau$ & $\errorXx$ & $\errorUu2$ & $\LerrorPpc$ & $\LerrorLl$ \\
 \hline 
   24 & $10^{-2}$ & 6.2091e-04 & 1.1700e-02 & 2.8168e-01 & 2.5660e-01 \\
   48 & $10^{-3}$ & 9.0002e-05 & 1.9780e-03 & 3.1748e-02 & 3.5368e-02 \\
   96 & $10^{-4}$ & 8.9183e-06 & 3.2252e-04 & 7.9251e-03 & 8.2088e-03 \\
\hline
\end{tabular}
\caption{($\alpha = 0.15$, 
$\mu=\overline\gamma=\overline\mu_\Gamma=\overline\lambda_\Gamma=
\overline\rho_{\Gamma,0}=1$)
Expanding bubble problem on $(-1,1)^2 \setminus [-\frac13,\frac13]^2$ 
over the time interval $[0,1]$ for the P2--P1 element with \XFEMGAMMA\ 
for the scheme (\ref{eq:GDa}--f). Here we use adaptive meshes.}
\label{tab:GDrhor2}
\end{table}%
The errors for the finite element approximation (\ref{eq:HGa}--f) are very
similar, see Tables~\ref{tab:rho} and \ref{tab:rhor2}. 
\begin{table}
\center
\begin{tabular}{llllll}
\hline
 $1/h$ & $\tau$ & $\errorXx$ & $\errorUu2$ & $\LerrorPpc$ 
 & $\LerrorLl$ \\ 
 \hline 
   3  & $10^{-2}$ & 2.7615e-03 & 1.7447e-02 & 3.1799e-01 & 3.6399e-01 \\ 
   6  & $10^{-3}$ & 2.0666e-04 & 2.5265e-03 & 4.0843e-02 & 9.7883e-02 \\ 
   12 & $10^{-4}$ & 3.3724e-05 & 7.7310e-04 & 8.3360e-03 & 9.1464e-03 \\ 
\hline
\end{tabular}
\caption{($\alpha = 0.15$,
$\mu=\overline\gamma=\overline\mu_\Gamma=\overline\lambda_\Gamma=\overline\rho_{\Gamma,0}=1$)
Expanding bubble problem on $(-1,1)^2 \setminus [-\frac13,\frac13]^2$ 
over the time interval $[0,1]$ 
for the P2--P1 element with \XFEMGAMMA\ for the scheme (\ref{eq:HGa}--f).
Here we use uniform meshes.}
\label{tab:rho}
\end{table}%
\begin{table}
\center
\begin{tabular}{llllll}
\hline
 $1/h_{f}$ & $\tau$ & $\errorXx$ & $\errorUu2$ & $\LerrorPpc$ & $\LerrorLl$ \\
 \hline 
   24 & $10^{-2}$ & 6.4263e-04 & 1.1700e-02 & 2.7907e-01 & 2.5256e-01 \\ 
   48 & $10^{-3}$ & 9.5236e-05 & 1.9780e-03 & 3.1747e-02 & 3.5357e-02 \\ 
   96 & $10^{-4}$ & 1.0197e-05 & 3.2348e-04 & 7.9294e-03 & 8.2135e-03 \\ 
\hline
\end{tabular}
\caption{($\alpha = 0.15$,
$\mu=\overline\gamma=\overline\mu_\Gamma=\overline\lambda_\Gamma=\overline\rho_{\Gamma,0}=1$)
Expanding bubble problem on $(-1,1)^2 \setminus [-\frac13,\frac13]^2$ 
over the time interval $[0,1]$ for the P2--P1 element with \XFEMGAMMA\ 
for the scheme (\ref{eq:HGa}--f). Here we use adaptive meshes.}
\label{tab:rhor2}
\end{table}%

\subsection{Numerical experiments in 2d}

\subsubsection{Bubble in shear flow} \label{sec:611}

In the literature on numerical methods for two-phase flow with insoluble
surfactant it is often common to consider shear flow experiments for an
initially circular bubble in order to study the effect of surfactants and of
different equations of state. In this subsection, we will perform such
simulations for our preferred scheme (\ref{eq:HGa}--f).
Here we consider the setup from \citet[Fig.~1]{LaiTH08}. In particular,
we let $\Omega = (-5,5) \times (-2,2)$ and
prescribe the inhomogeneous Dirichlet boundary condition
$\vec g(\vec z) = (\frac12\,z_2, 0)^T$ on $\partial\Omega = \partial_1\Omega$.
Moreover, $\Gamma_0 = \{\vec z \in \R^2 : |\vec z | = 1 \}$. The physical
parameters are given by
\begin{equation} \label{eq:Lai}
\rho =  1\,, \quad \mu = 0.1\,,\quad 
\overline\gamma = 0.2\,,\quad \Ds = 0.1\,,\quad
\vec f = \vec 0\,,\quad \vec u_0 = \vec g\,.
\end{equation}
First we investigate the effect of different surface viscosity strengths on
the evolution in the absence of surfactants and surface mass. 
I.e.\ we have
$\rho_{\Gamma,0} = 0$ and the surface tension is constant,
see (\ref{eq:Fconst}). See Figure~\ref{fig:rho0} for some time evolutions
for different values of $\overline\mu_\Gamma = \overline\lambda_\Gamma$.
We note that for larger values of the surface viscosities, the effect of the
shearing flow on the shape of the bubble is reduced.
\begin{figure}
\center
\newcommand\localwidth{0.30\textwidth}
\includegraphics[angle=-90,width=\localwidth]{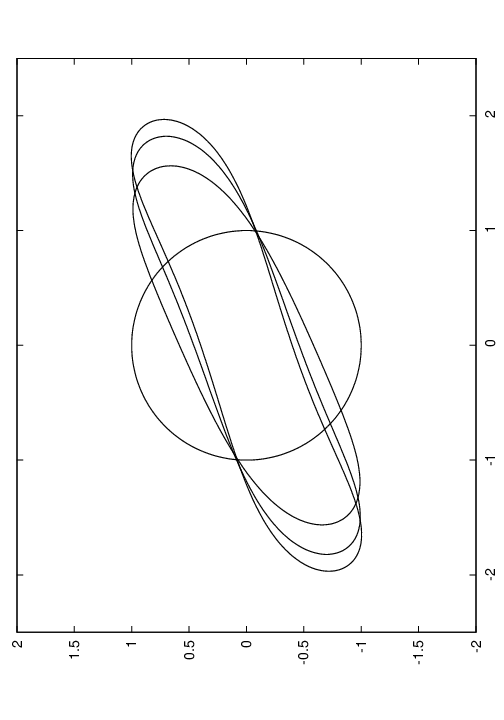} 
\includegraphics[angle=-90,width=\localwidth]{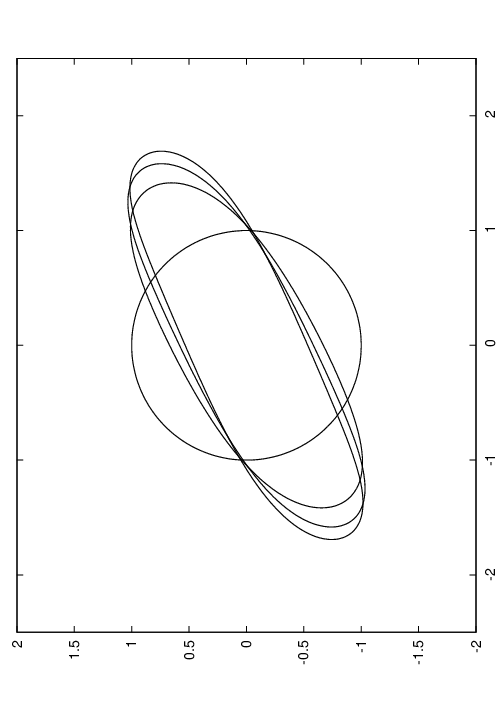}
\includegraphics[angle=-90,width=\localwidth]{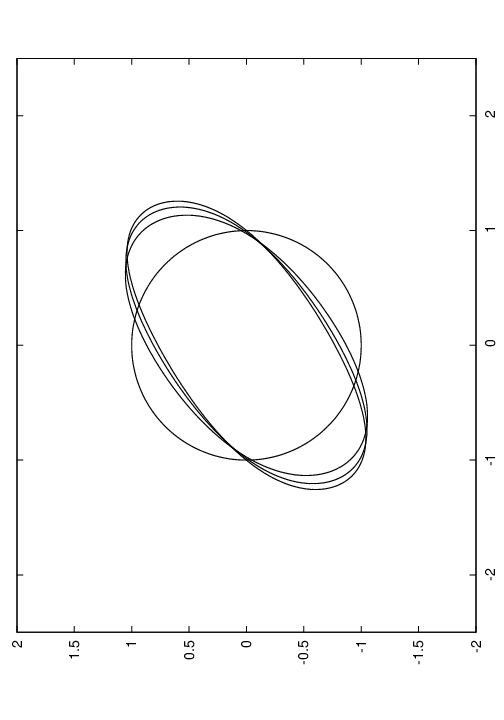}
\caption{(2\,adapt$_{9,4}$)
The time evolution of a drop in shear flow with $\rho_{\Gamma,0}=0$
for (\ref{eq:Fconst}) and (\ref{eq:muconst}) with 
$\overline\mu_\Gamma = \overline\lambda_\Gamma = 0.01$ (left), 
$\overline\mu_\Gamma = \overline\lambda_\Gamma = 1$ (middle) and 
$\overline\mu_\Gamma = \overline\lambda_\Gamma = 10$ (right). 
Plots are at times $t=0,\,4,\,8,\,12$. 
}
\label{fig:rho0}
\end{figure}%
The same experiments with surface mass present, 
i.e.\ for $\rho_{\Gamma,0} = 1$, can be seen in Figure~\ref{fig:rho1}.
In general, there are not many differences to the evolutions shown in
Figure~\ref{fig:rho0}. However, for
small surface viscosity constants there is a marked difference in the
evolution. In particular, the bubble appears to be shearing more 
when surface mass is present. 
\begin{figure}
\center
\newcommand\localwidth{0.3\textwidth}
\includegraphics[angle=-90,width=\localwidth]{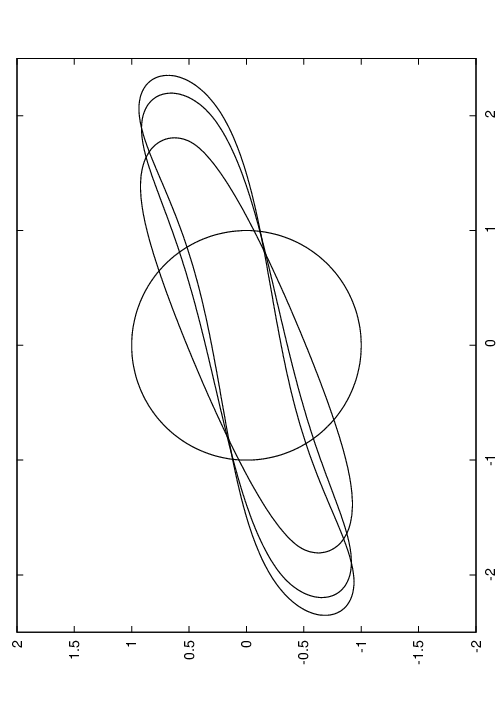}
\includegraphics[angle=-90,width=\localwidth]{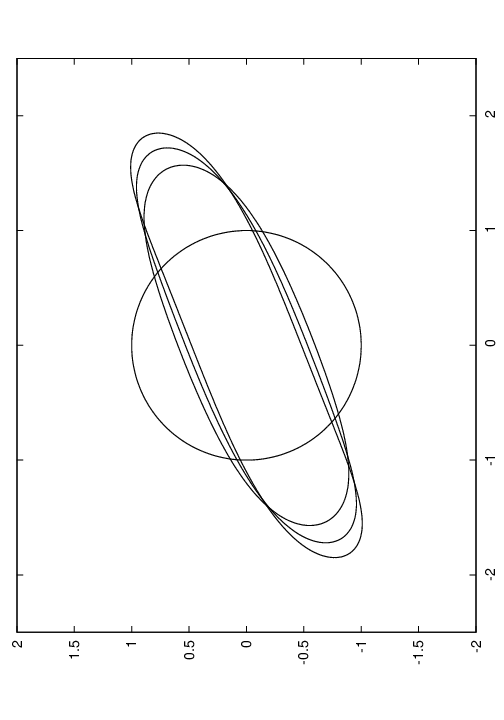} 
\includegraphics[angle=-90,width=\localwidth]{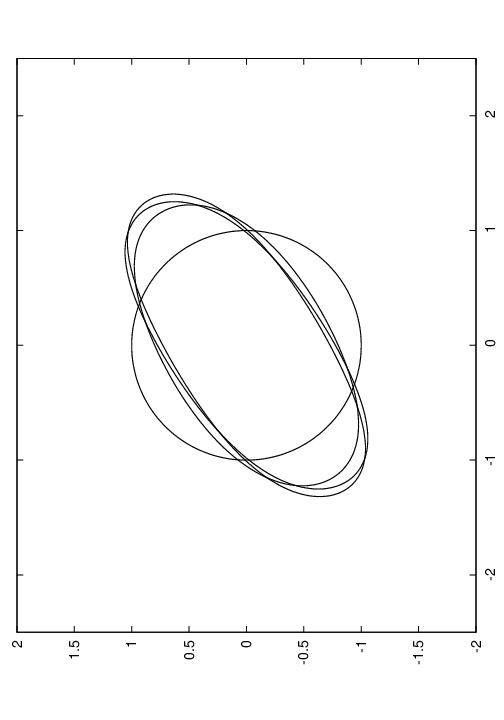}
\caption{(2\,adapt$_{9,4}$)
The time evolution of a drop in shear flow with $\rho_{\Gamma,0}=1$
for (\ref{eq:Fconst}) and (\ref{eq:muconst}) with 
$\overline\mu_\Gamma = \overline\lambda_\Gamma = 0.01$ (left), 
$\overline\mu_\Gamma = \overline\lambda_\Gamma = 1$ (middle) and 
$\overline\mu_\Gamma = \overline\lambda_\Gamma = 10$ (right). 
Plots are at times $t=0,\,4,\,8,\,12$.
}
\label{fig:rho1}
\end{figure}%
Details of the surface mass distribution at the final time $t=12$ can be seen
in Figure~\ref{fig:rho1_1d}, while velocity plots are given in 
Figure~\ref{fig:2dLai_u}.
\begin{figure}
\center
\newcommand\localwidth{0.3\textwidth}
\includegraphics[angle=-90,width=\localwidth]{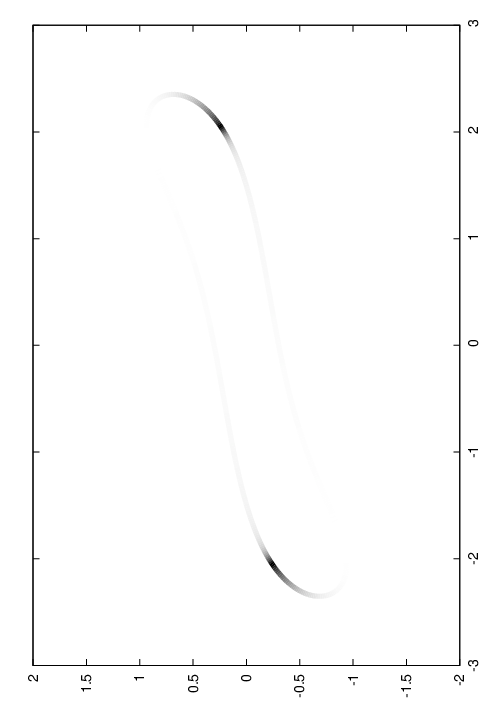}
\includegraphics[angle=-90,width=\localwidth]{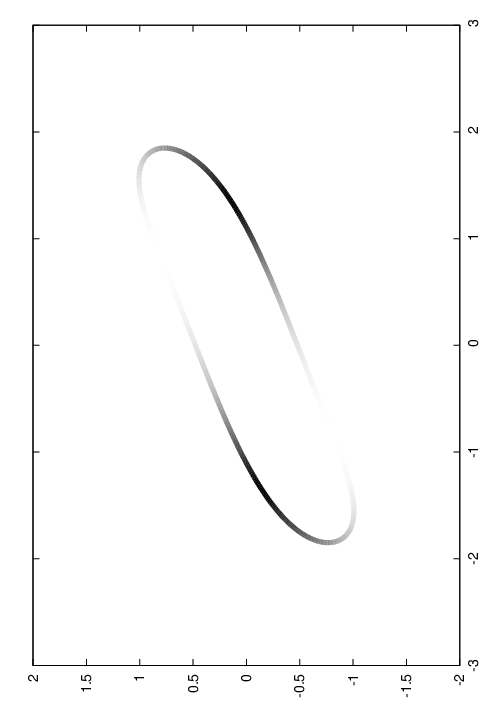}
\includegraphics[angle=-90,width=\localwidth]{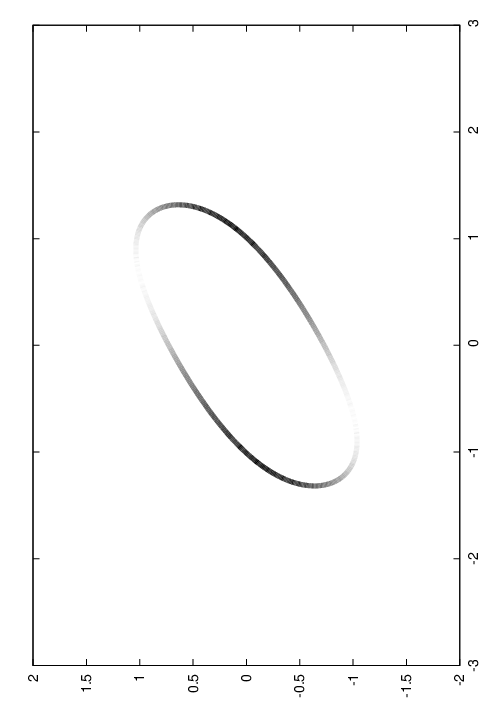}
\includegraphics[angle=-90,width=\localwidth]{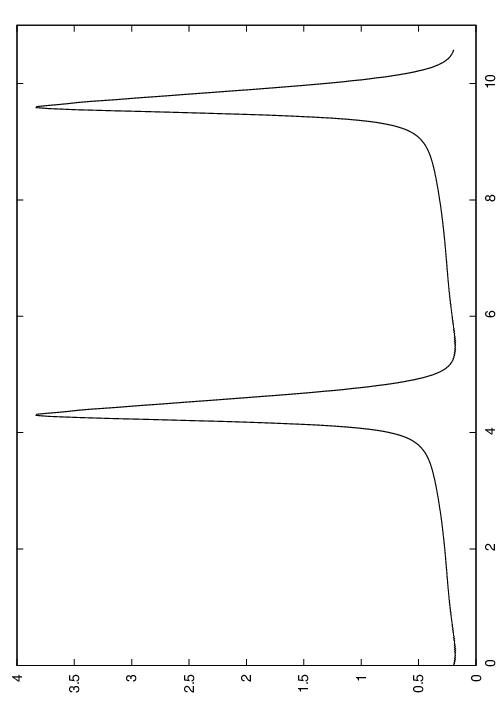} 
\includegraphics[angle=-90,width=\localwidth]{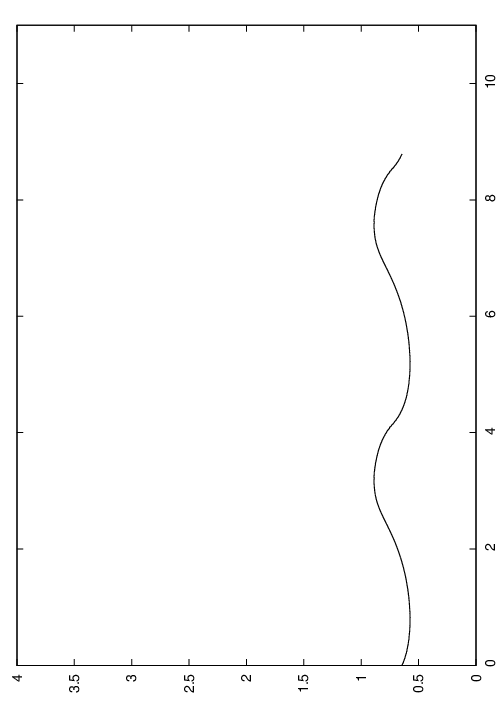}
\includegraphics[angle=-90,width=\localwidth]{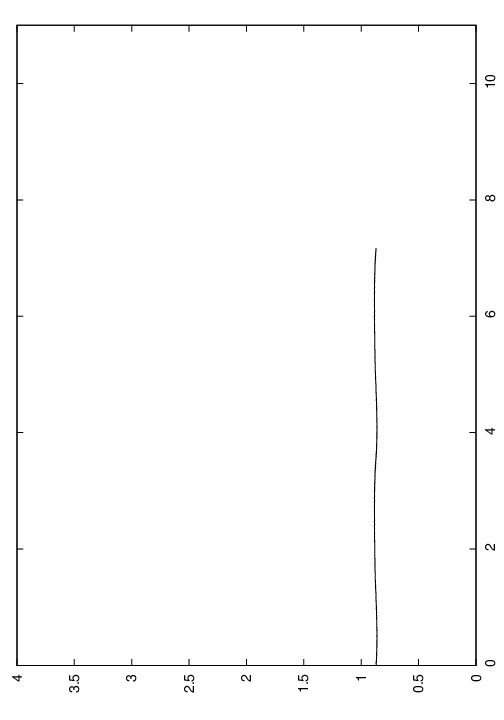}
\caption{(2\,adapt$_{9,4}$)
Plots of the discrete surface mass on $\Gamma^m$ at time $t=12$ for
$\overline\mu_\Gamma = \overline\lambda_\Gamma = 0.01$ (left), 
$\overline\mu_\Gamma = \overline\lambda_\Gamma = 1$ (middle) and 
$\overline\mu_\Gamma = \overline\lambda_\Gamma = 10$ (right). 
Below are plots of the discrete surface mass against arclength.
}
\label{fig:rho1_1d}
\end{figure}%
\begin{figure}
\center
\newcommand\localwidth{0.32\textwidth}
\includegraphics[angle=-0,width=\localwidth]{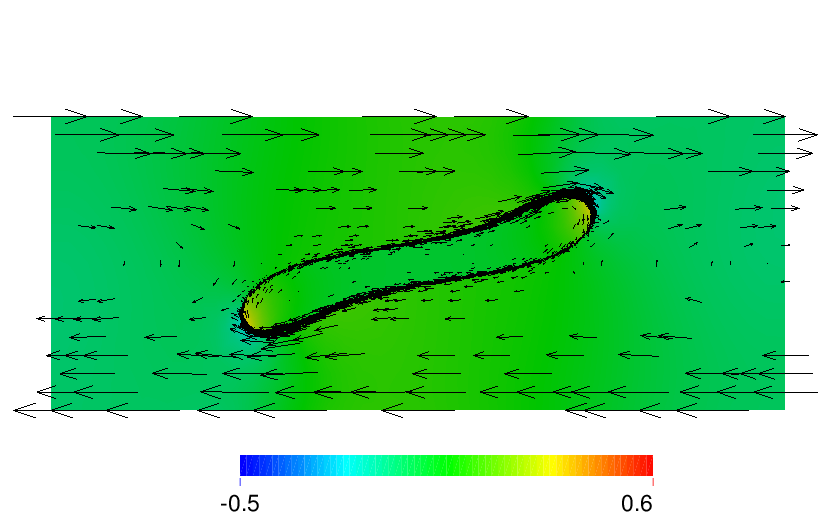}
\includegraphics[angle=-0,width=\localwidth]{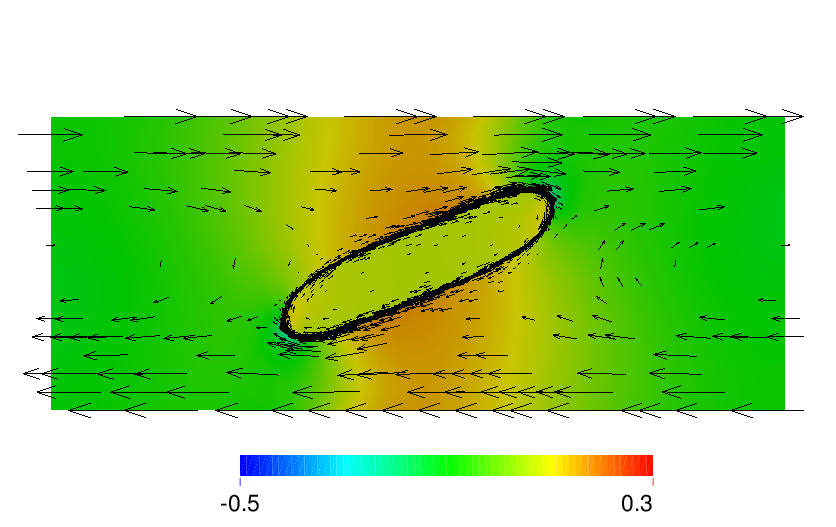}
\includegraphics[angle=-0,width=\localwidth]{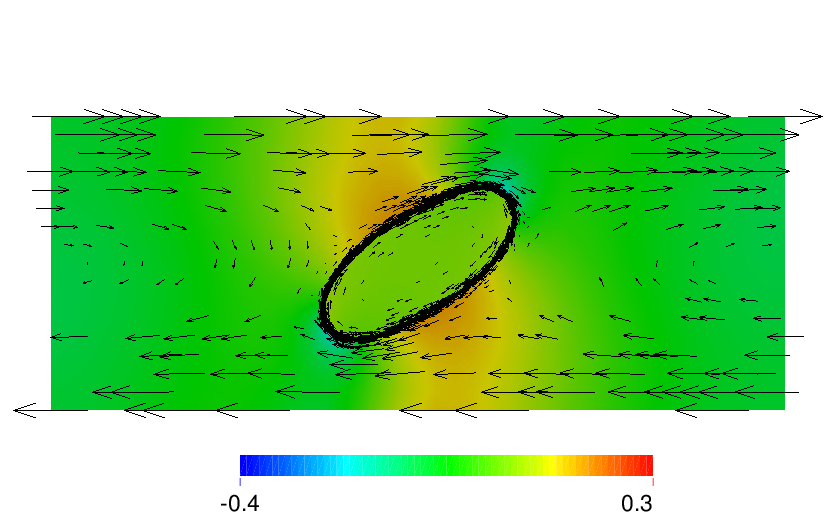}
\caption{(2\,adapt$_{9,4}$)
Velocity fields for the solutions depicted in Figure~\ref{fig:rho1_1d}, 
with the background colouring depending on the pressure values.
}
\label{fig:2dLai_u}
\end{figure}%

For very small values of $\overline\mu_\Gamma = \overline\lambda_\Gamma$ an
interesting effect can be observed. As this value gets smaller, we 
observe a marked concentration of the discrete surface material density 
$\rho_\Gamma^m$ at two points on the interface. This poses a challenge for the
numerical methods, as the peaks in the surface mass density lead to sharp
fronts, which behave almost like a shock. We exhibit the difficulties of the
schemes (\ref{eq:GDa}--f) and (\ref{eq:HGa}--f) with the ``degenerate'' case
$\overline\mu_\Gamma = \overline\lambda_\Gamma=0$ in 
Figure~\ref{fig:degen}.
Clearly, the scheme (\ref{eq:GDa}--f) displays a very nonuniform mesh, 
with some vertices close to coalescence. The latter appears to lead to small
oscillations in $\rho_\Gamma^m$.
The scheme (\ref{eq:HGa}--f), on the other hand, shows very uniform meshes, but
suffers from oscillations in the discrete surface mass density where
$\rho_\Gamma^m$ is close to zero.
On recalling Remark~\ref{rem:oscm}, we note that by adding numerical diffusion
into the scheme, these oscillations can be avoided. This is underlined by the 
numerical results shown in Figure~\ref{fig:degen} for the scheme
(\ref{eq:HGa}--f) with numerical diffusion; $\vartheta(s) = \tfrac s{20}$. 
\begin{figure}
\center
\newcommand\localwidth{0.4\textwidth}
\includegraphics[angle=-90,width=\localwidth]{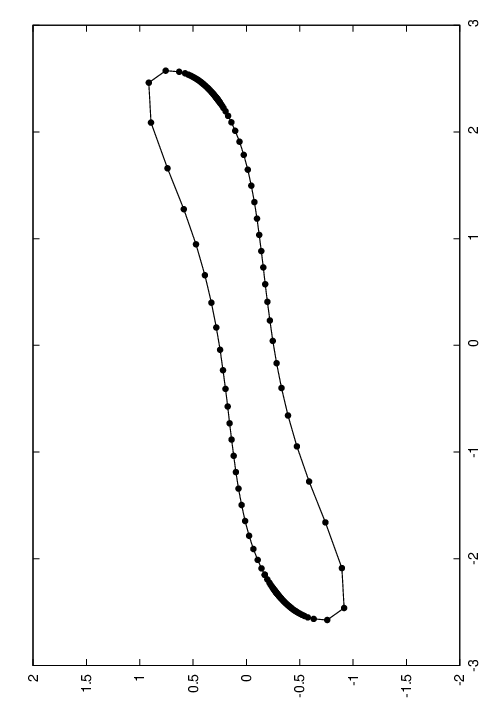}
\includegraphics[angle=-90,width=\localwidth]{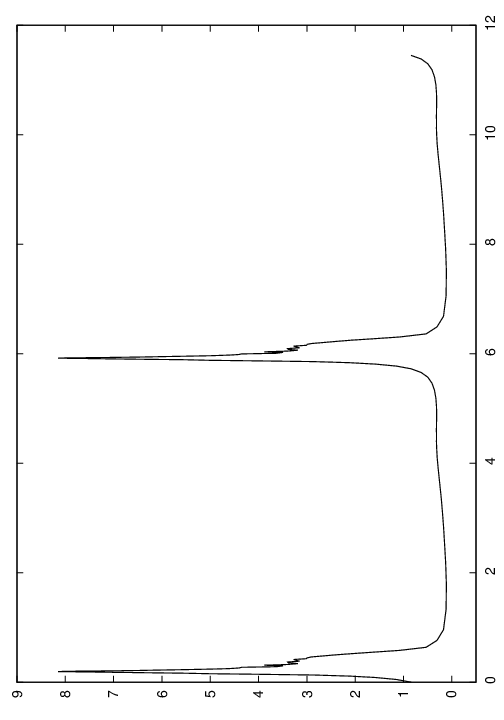}
\includegraphics[angle=-90,width=\localwidth]{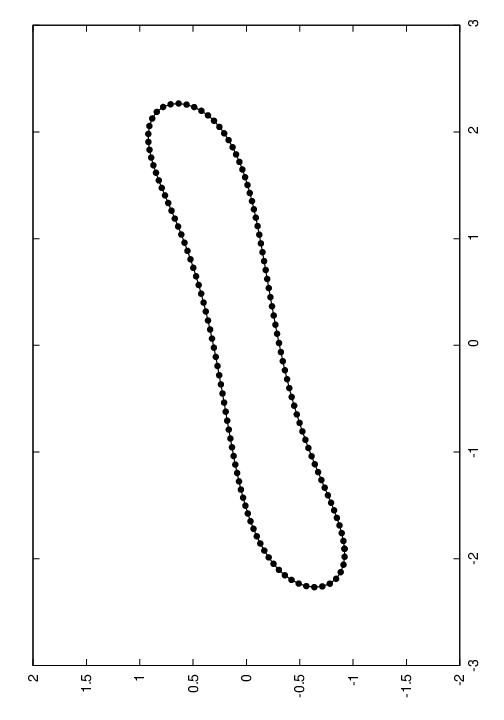}
\includegraphics[angle=-90,width=\localwidth]{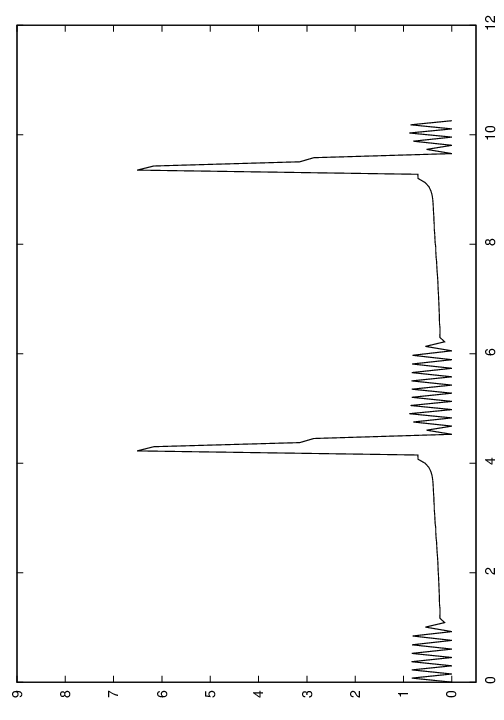}
\includegraphics[angle=-90,width=\localwidth]{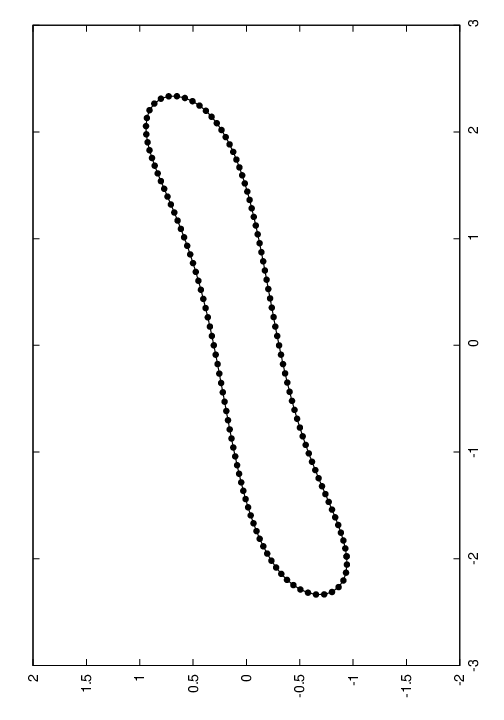}
\includegraphics[angle=-90,width=\localwidth]{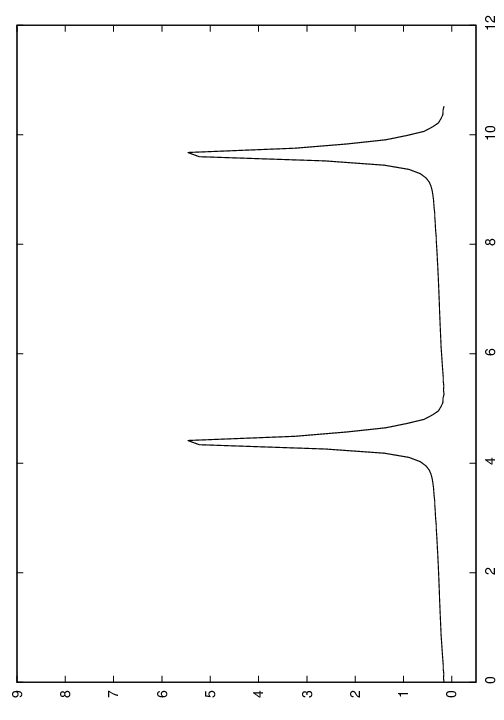}
\caption{(adapt$_{7,3}$)
Plots of $\Gamma^m$ and plots of the discrete surface mass 
against arclength at time $t=12$ for
$\overline\mu_\Gamma = \overline\lambda_\Gamma = 0$ for the schemes
(\ref{eq:GDa}--f), top,
(\ref{eq:HGa}--f), middle, and
(\ref{eq:HGa}--f) with numerical diffusion; $\vartheta(s) =
\tfrac s{20}$, bottom.
}
\label{fig:degen}
\end{figure}%

{From} a physical point of view it is not easy to explain the fact that the
surface mass accumulates at two points on the interface. However, we recall
from Theorem~\ref{thm:stabHG} that such a relocation of mass on the
interface leads to a smaller overall energy, if the discrete velocity 
$\vec U^{m}$ at these points is zero, or nearly zero. In fact, this is what
appears to happen for $\overline\mu_\Gamma = \overline\lambda_\Gamma = 0$, as
can be seen from the velocity plot in Figure~\ref{fig:HGdegenu}.
\begin{figure}
\center
\newcommand\localwidth{0.6\textwidth}
\includegraphics[angle=-0,width=\localwidth]{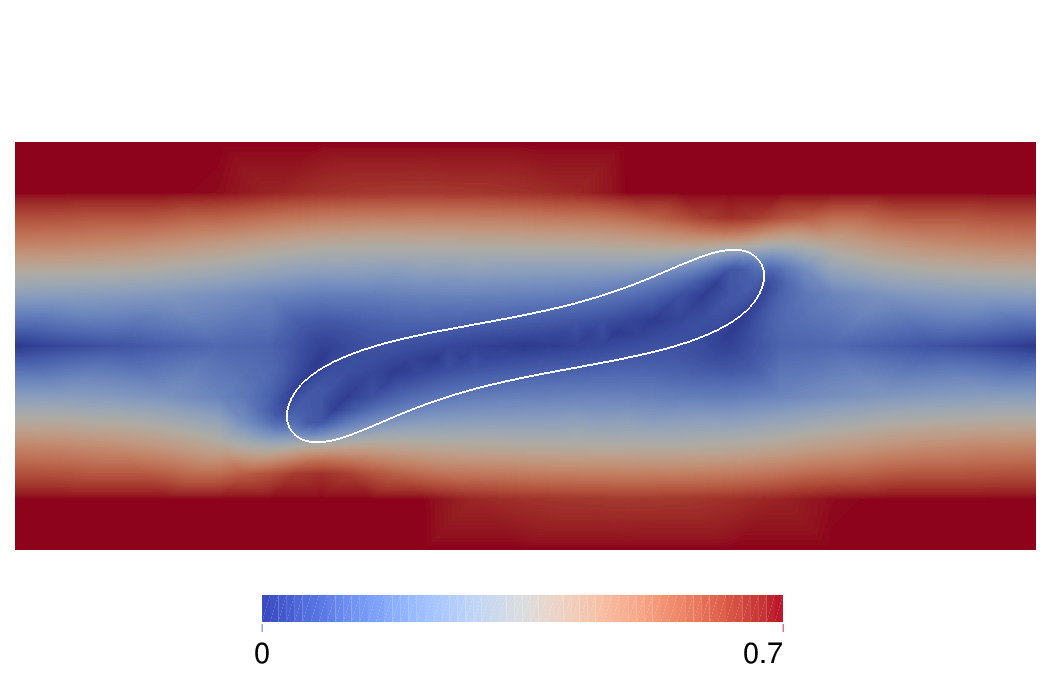}
\caption{(adapt$_{7,3}$)
A plot of $|\vec U^m|$ at time $t=12$ for
$\overline\mu_\Gamma = \overline\lambda_\Gamma = 0$
for the scheme (\ref{eq:HGa}--f) with numerical diffusion; $\vartheta(s) =
\tfrac s{20}$.}
\label{fig:HGdegenu}
\end{figure}%

In the next simulation we consider the presence of surfactant on the
interface. To this end, we choose the linear equation of state 
(\ref{eq:gamma1}) with $\beta = 0.5$ and let
\begin{equation} \label{eq:mllin}
\mu_\Gamma(r) = \overline\mu_\Gamma\,(1 + b_\mu\,[r]_+)
\quad\text{and}\quad
\lambda_\Gamma(r) = \overline\lambda_\Gamma\,(1 + b_\lambda\,[r]_+)
\quad \forall\ r \in \R\,,
\end{equation}
where $\overline\mu_\Gamma = \overline\lambda_\Gamma = 0.1$ and
$b_\mu = b_\lambda = 100$, 
with the remaining parameters as in (\ref{eq:Lai}).
We also let $\rho_{\Gamma,0}=1$, while the initial distribution of
surfactant on $\Gamma(0)$ is chosen as
\begin{equation} \label{eq:psi0}
\psi_0(\vec z) = 10^{-6} + [z_1]_+\,.
\end{equation}
The evolutions of the approximations of $\psi$ and $\rho_\Gamma$ can be seen in
Figure~\ref{fig:2dLai_psi05}. The initially onesided distribution of
surfactant, together with the definitions (\ref{eq:mllin}), leads to the bubble
moving significantly to the right. The higher concentration of surfactant on 
the right leads to surface tension gradients on the interface, which then 
cause tangential shear stresses on the interface. These so called Marangoni 
forces lead to the overall movement of the drop to the right.
Varying the value of $\beta$ between $0$ and $1$
had no significant effect on the overall evolution, and so we omit further
numerical results for this setting.
\begin{figure}
\center
\newcommand\localwidth{0.24\textwidth}
\includegraphics[angle=-90,width=\localwidth]{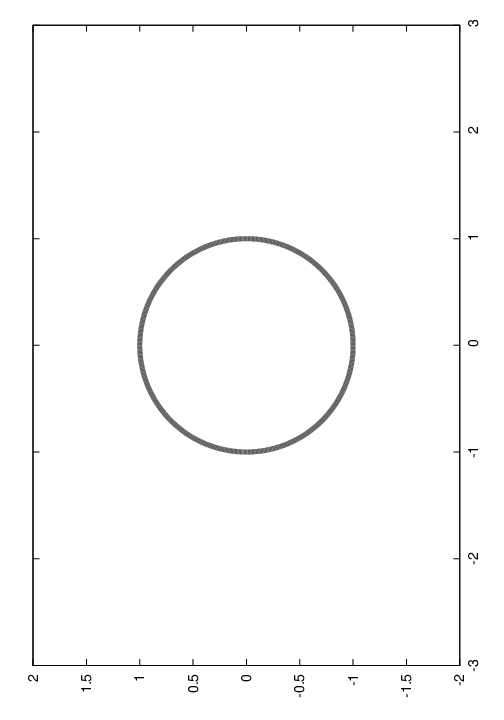}
\includegraphics[angle=-90,width=\localwidth]{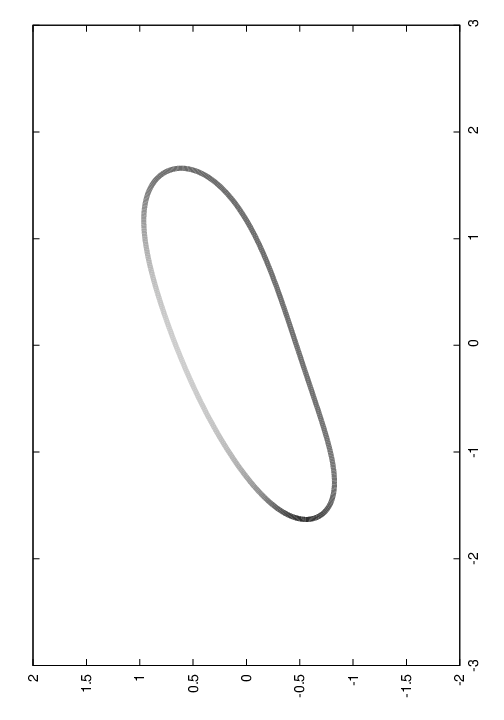}
\includegraphics[angle=-90,width=\localwidth]{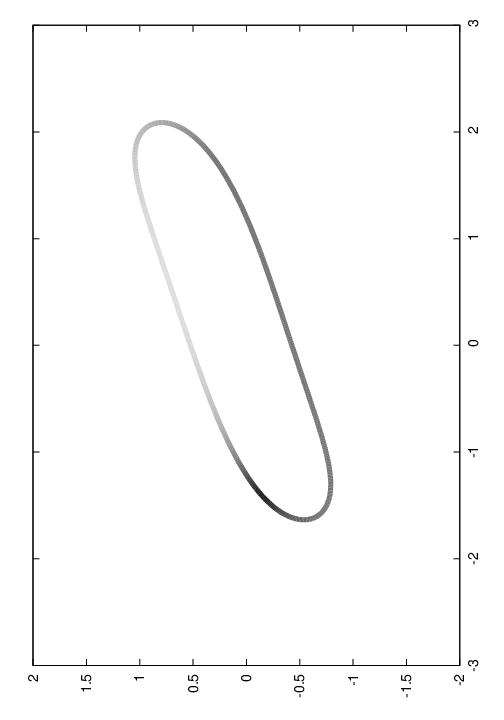}
\includegraphics[angle=-90,width=\localwidth]{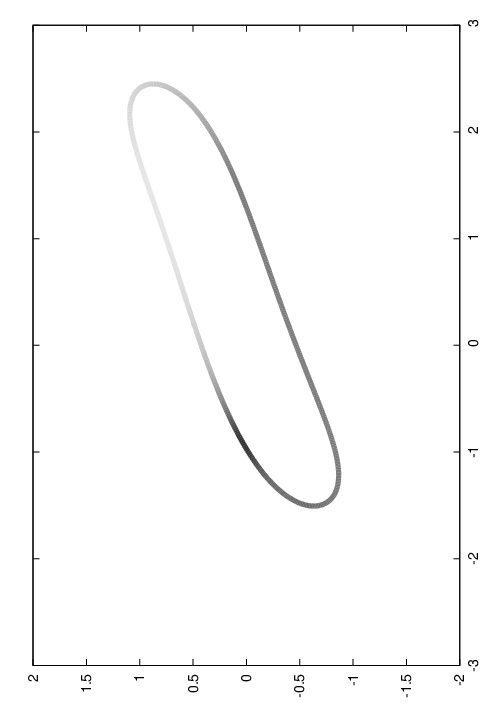}
\includegraphics[angle=-90,width=\localwidth]{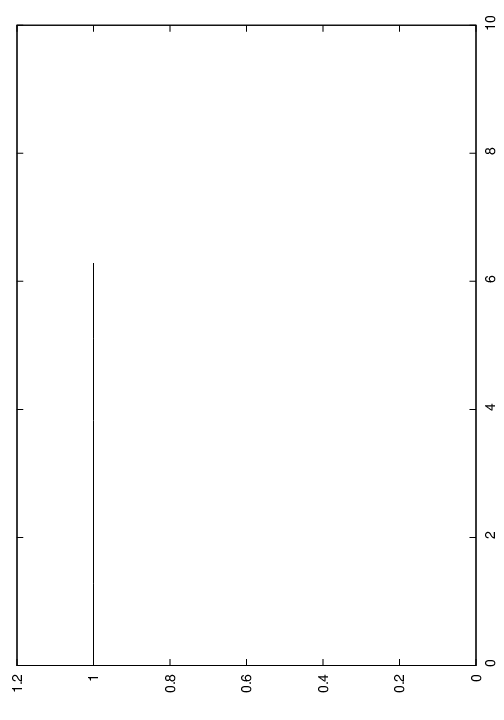}
\includegraphics[angle=-90,width=\localwidth]{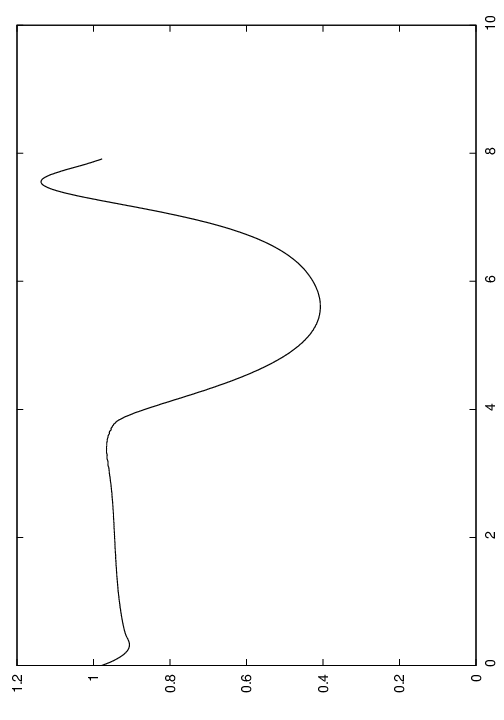}
\includegraphics[angle=-90,width=\localwidth]{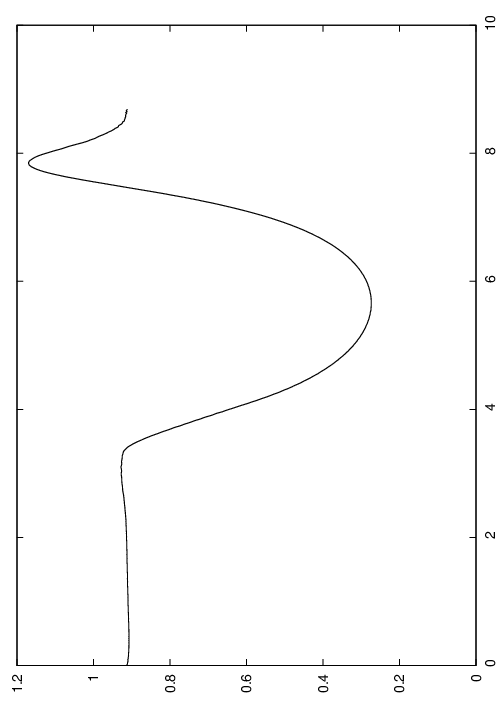}
\includegraphics[angle=-90,width=\localwidth]{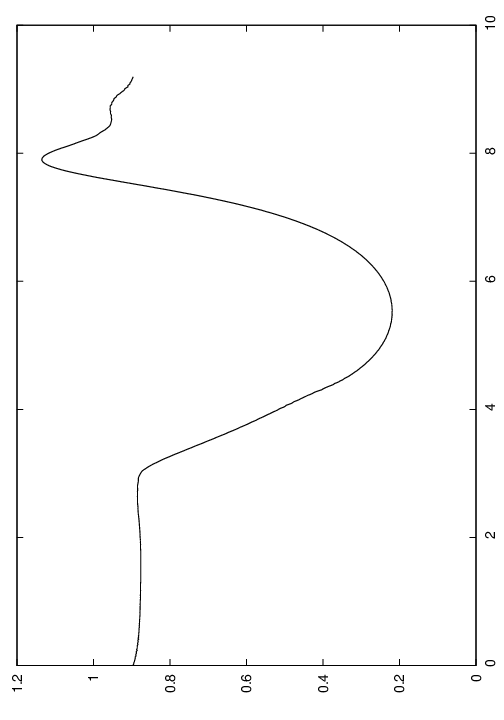}
\includegraphics[angle=-90,width=\localwidth]{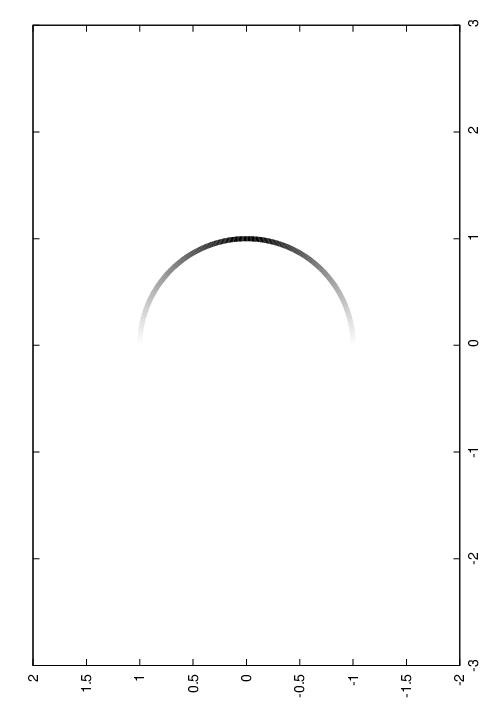}
\includegraphics[angle=-90,width=\localwidth]{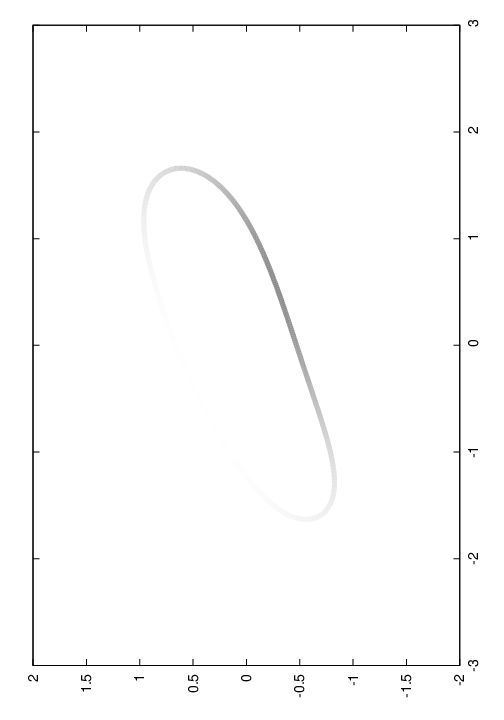}
\includegraphics[angle=-90,width=\localwidth]{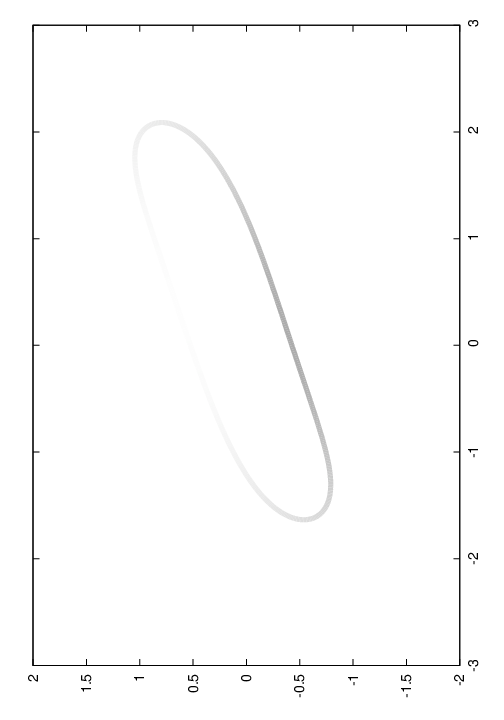}
\includegraphics[angle=-90,width=\localwidth]{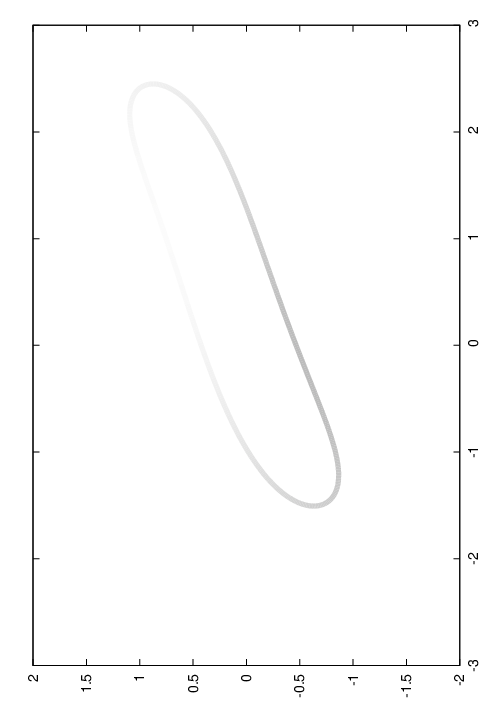}
\includegraphics[angle=-90,width=\localwidth]{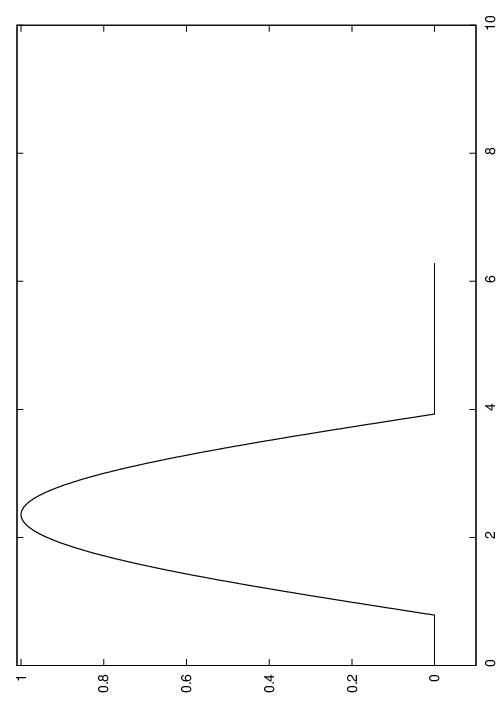}
\includegraphics[angle=-90,width=\localwidth]{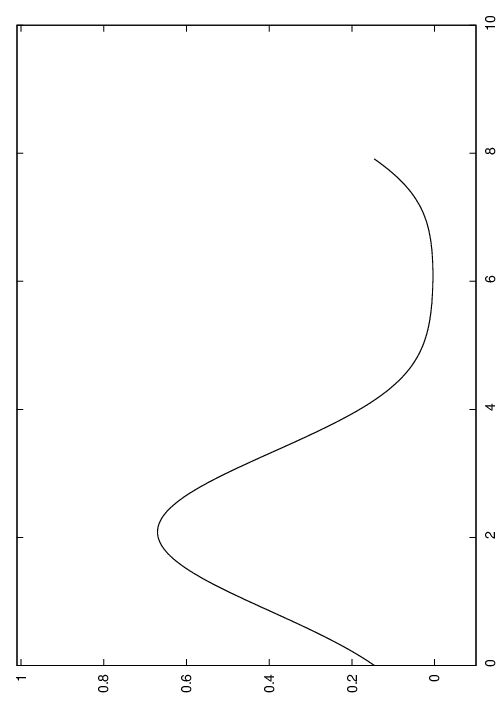}
\includegraphics[angle=-90,width=\localwidth]{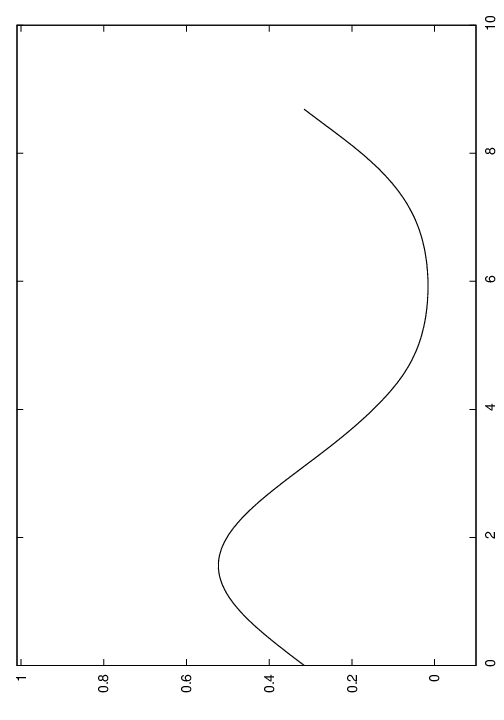}
\includegraphics[angle=-90,width=\localwidth]{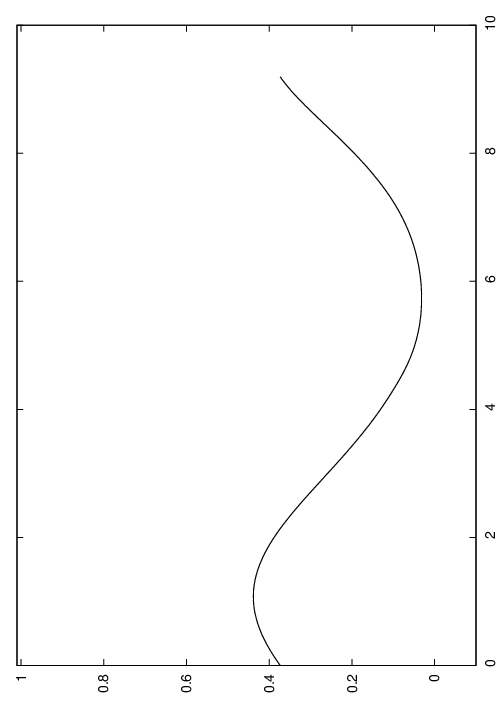}
\caption{(2\,adapt$_{9,4}$)
The time evolution of a drop in shear flow with (\ref{eq:gamma1}) and 
$\beta = 0.5$ for the scheme (\ref{eq:HGa}--f). 
The top two rows show the evolution of the discrete surface
material density, while the lower two rows show the evolution of the discrete
surfactant concentration.
Plots are at times $t=0,\,4,\,8,\,12$. 
In the first row the grey scales linearly with the surface material density 
ranging from 0 (white) to 1.4 (black).
In the third row the grey scales linearly with the surfactant
concentration ranging from 0 (white) to 1 (black).
}
\label{fig:2dLai_psi05}
\end{figure}%

\subsubsection{Rising bubble} \label{sec:612}
In this subsection we compare the schemes (\ref{eq:GDa}--f) and 
(\ref{eq:HGa}--f) for a rising bubble
experiment that is motivated by the benchmark problems in \cite{HysingTKPBGT09}
for two-phase Navier--Stokes flow.
In particular, we use the setup described in \cite{HysingTKPBGT09}, 
see Figure~2 there; i.e.\ $\Omega = (0,1) \times (0,2)$ with 
$\partial_1\Omega = [0,1] \times \{0,2\}$ and 
$\partial_2\Omega = \{0,1\} \times (0,2)$.
Moreover, $\Gamma_0 = \{\vec z \in \R^2 : |\vec z - (\frac12, \frac12)^T| =
\frac14\}$.
The physical parameters from the test case 1 in 
\citet[Table~I]{HysingTKPBGT09} are given by
\begin{equation} \label{eq:Hysing1}
\rho_+ = 1000\,,\quad \rho_- = 100\,,\quad \mu_+ = 10\,,\quad \mu_- = 1\,,\quad
\gamma_0 = 24.5\,,\quad \vec f_1 = -0.98\,\vec\ek_d\,,\quad 
\vec f_2 = \vec 0\,,
\end{equation}
where, here and throughout, 
$\{\vec \ek_j\}_{j=1}^d$ denotes the standard basis in $\R^d$.
For the surfactant problem we choose the parameters $\Ds = 0.1$ and 
(\ref{eq:gamma1}) with $\beta = 0.5$. For the surface material parameters
we choose $\overline\mu_\Gamma = \overline\lambda_\Gamma = 0.1$ and
$\rho_{\Gamma,0}=1$. We refer to our recent papers \cite{fluidfbp,tpfs} for
numerical simulations for this benchmark problem in the absence of a
Boussinesq--Scriven surface fluid.

We start with a simulation for the scheme (\ref{eq:GDa}--f), using 
the discretization parameters adapt$_{7, 3}$.
The results can be seen on the left of Figure~\ref{fig:dziukbgn}.
We see that the vertices of the approximation
$\Gamma^m$ are transported with the fluid flow.
This means that many vertices can
be found at the bottom of the bubble, with hardly any vertices left at the top.
\begin{figure}
\center
\includegraphics[angle=-90,width=0.45\textwidth]{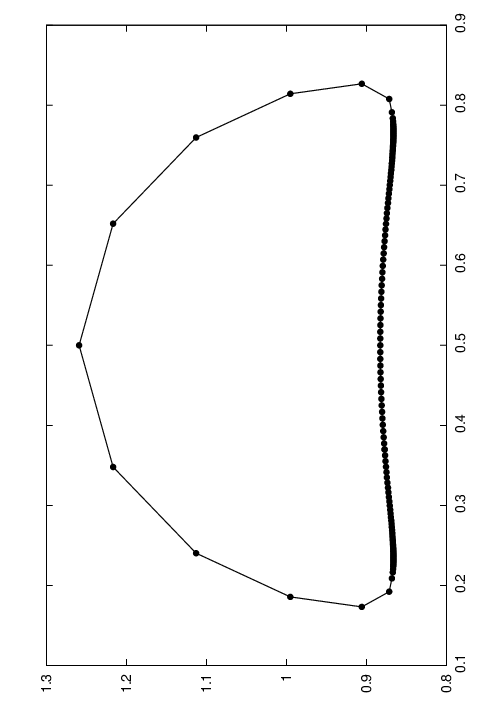}
\includegraphics[angle=-90,width=0.45\textwidth]{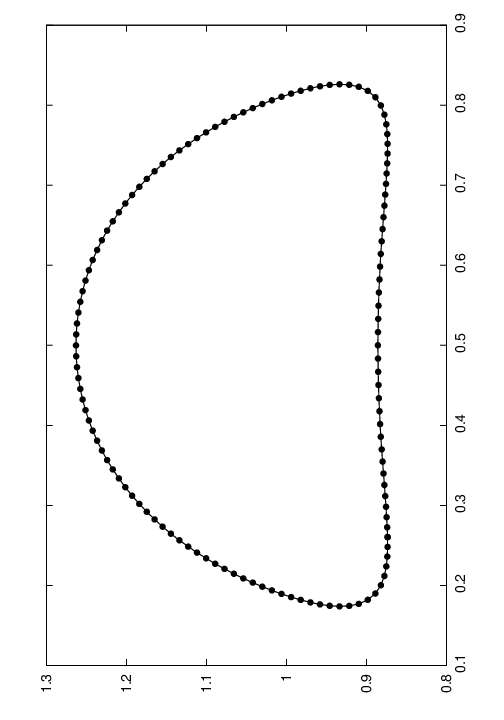}
\mbox{
\includegraphics[angle=-90,width=0.22\textwidth]{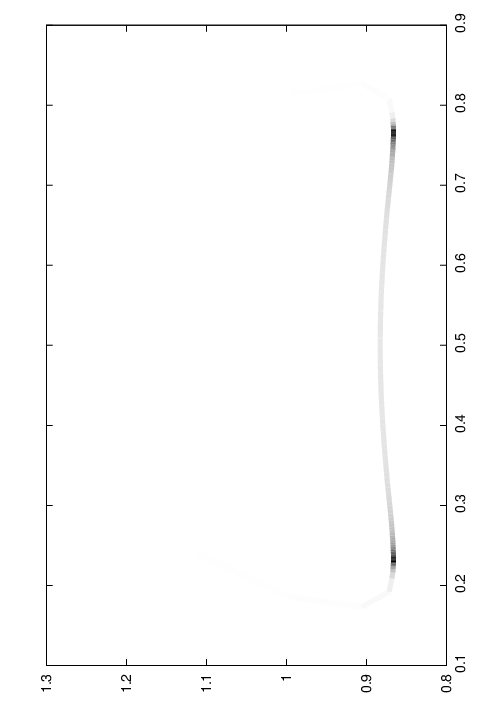}
\includegraphics[angle=-90,width=0.22\textwidth]{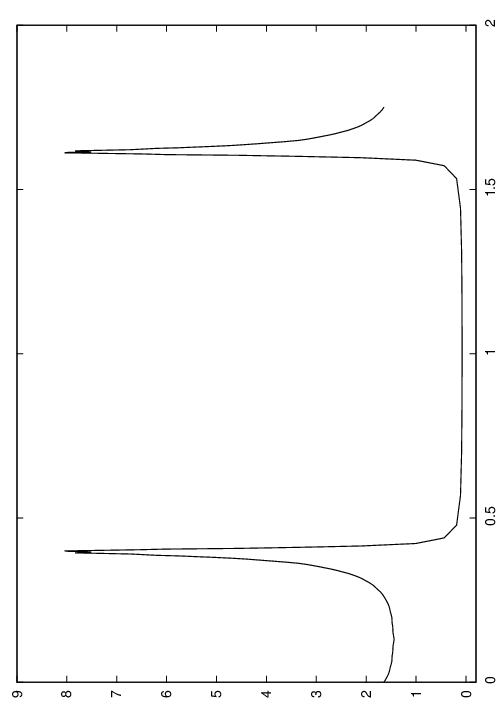}
\includegraphics[angle=-90,width=0.22\textwidth]{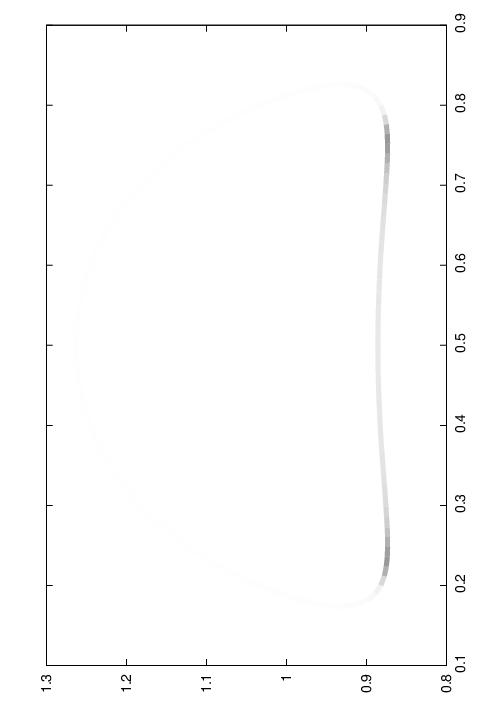}
\includegraphics[angle=-90,width=0.22\textwidth]{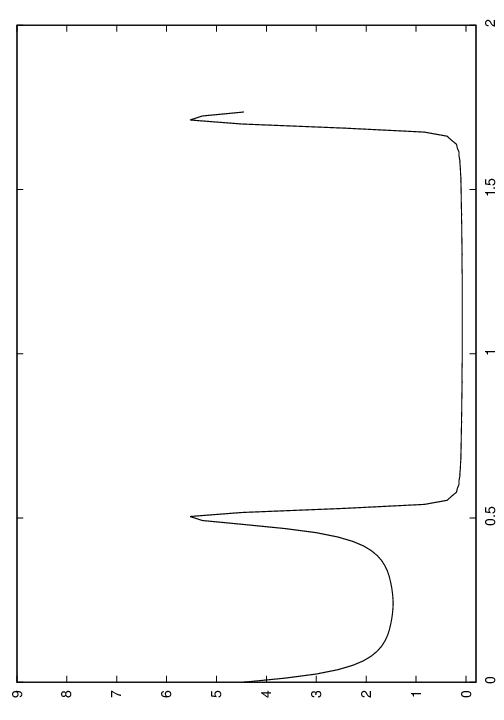}
}
\mbox{
\includegraphics[angle=-90,width=0.22\textwidth]{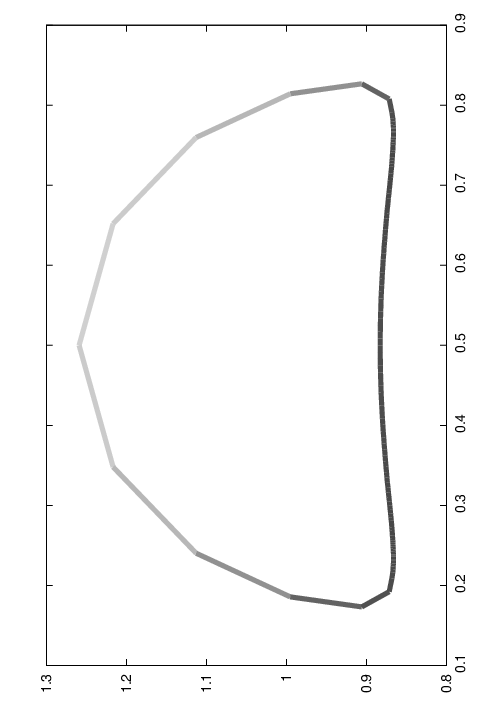}
\includegraphics[angle=-90,width=0.22\textwidth]{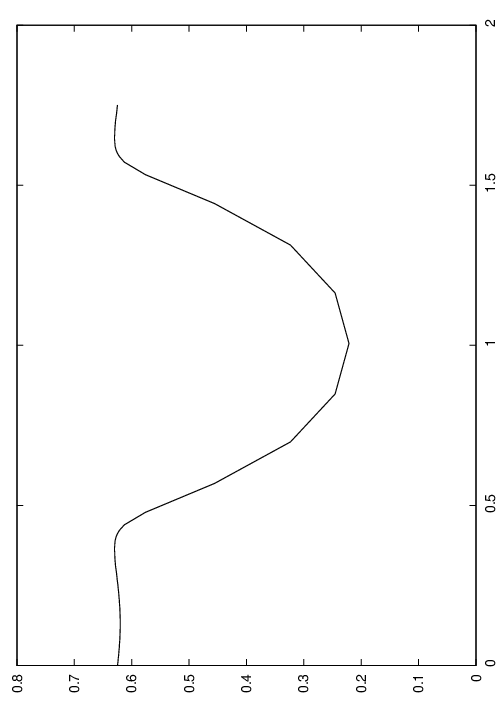}
\includegraphics[angle=-90,width=0.22\textwidth]{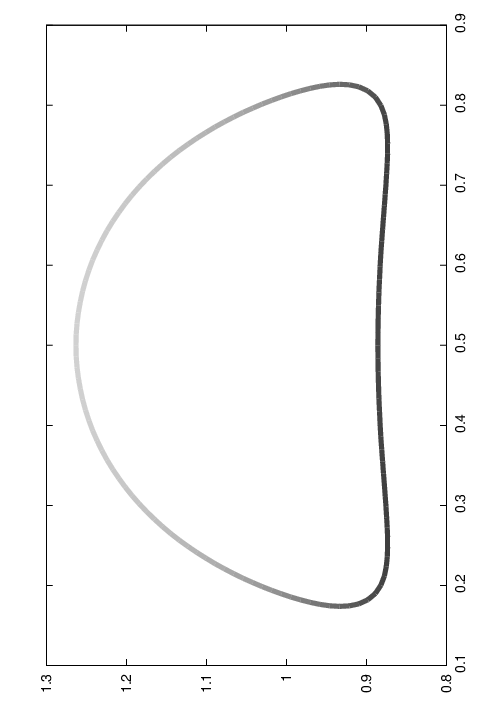}
\includegraphics[angle=-90,width=0.22\textwidth]{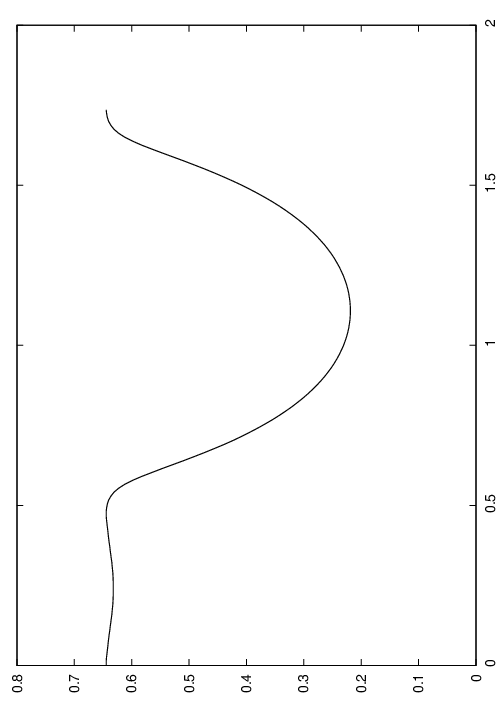}
}
\caption{(adapt$_{7,3}$) 
Vertex distributions for the final bubble at time 
$t=3$ for the schemes (\ref{eq:GDa}--f), left, and (\ref{eq:HGa}--f), right.
The latter scheme uses numerical diffusion with 
$\vartheta(s) = \tfrac s{20}$.
The middle row shows the discrete surface material densities, while the bottom
row shows the discrete surfactant concentrations.
In the former the grey scales linearly with the surface material density 
ranging from 0 (white) to 9 (black), while in the latter
the grey scales linearly with the surfactant concentration ranging from 
0 (white) to 0.7 (black).
}
\label{fig:dziukbgn}
\end{figure}%
We also remark that for this computation the area of the inner phase 
decreases by $1.3\%$, so the volume of the two phases is not preserved.
The same computation for our preferred scheme (\ref{eq:HGa}--f), where 
the tangential movement of vertices yields an almost equidistributed 
approximation of $\Gamma^m$, can be seen on the right of
Figure~\ref{fig:dziukbgn}. In order to avoid oscillations in 
$\rho^m_\Gamma$ close to zero, we use numerical diffusion with 
$\vartheta(s) = \tfrac s{20}$ for this numerical experiment.
We remark that for this computation the areas of the two phases,
as well as the total surfactant amount and the total surface mass on 
$\Gamma^m$, were conserved. 

In view of the superior mesh properties of our preferred scheme 
(\ref{eq:HGa}--f), from now on we only consider numerical experiments for the 
scheme (\ref{eq:HGa}--f). 

\subsection{Numerical experiments in 3d}

In this subsection we present numerical results for $d=3$ for our preferred
scheme \mbox{(\ref{eq:HGa}--f)}. As discretization parameters we always choose 
$\frac1{10}\,{\rm adapt}_{5,2}$.

\subsubsection{Bubble in shear flow}
In this subsection we report on some 3d analogues of the computations in
\S\ref{sec:611}.
In particular, we perform shear flow experiments
on the domain $\Omega = (-5,5)\times (-2,2)^2$ with
$\partial\Omega=\partial_1\Omega$ and $\vec g(\vec z) = (\frac12\,z_3,0,0)^T$.
The physical parameters are as in (\ref{eq:Lai}), and for simplicity 
we take
$\rho_{\Gamma,0}=0$. See Figure~\ref{fig:3drho0} for the final bubble shapes
for a selection of parameters $\overline\mu_\Gamma$ and 
$\overline\lambda_\Gamma$ in (\ref{eq:muconst}). 
\begin{figure}
\center
\newcommand\localwidth{0.45\textwidth}
\includegraphics[angle=-90,width=\localwidth]{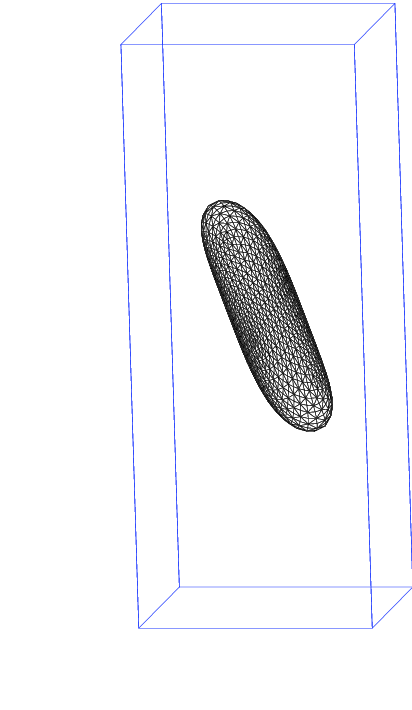}
\includegraphics[angle=-90,width=\localwidth]{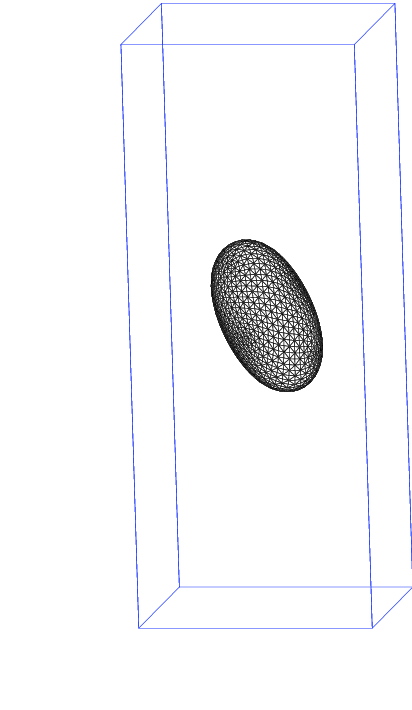}
\includegraphics[angle=-90,width=\localwidth]{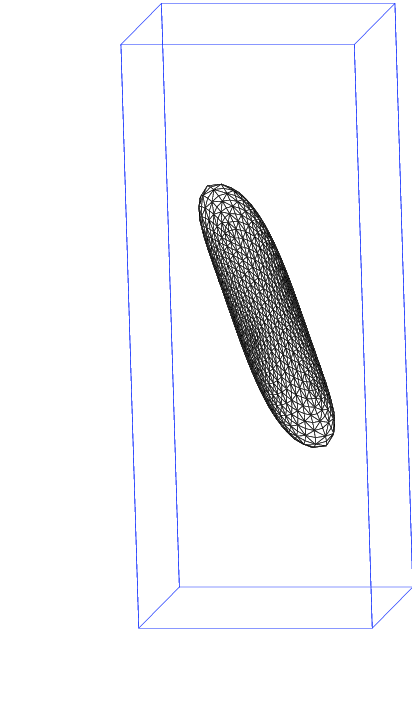}
\includegraphics[angle=-90,width=\localwidth]{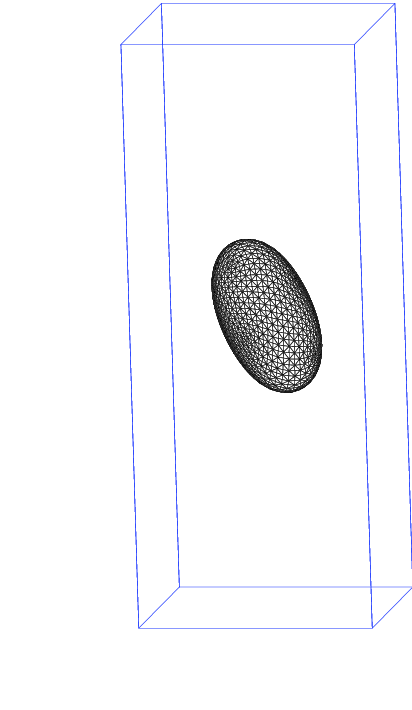}
\caption{
The discrete interface $\Gamma^m$ at time $t=12$ for 
a drop in shear flow with $\rho_{\Gamma,0}=0$
for (\ref{eq:Fconst}) and (\ref{eq:muconst}) with 
$\overline\mu_\Gamma = \overline\lambda_\Gamma = 0$, 
$\overline\mu_\Gamma = 1$, $\overline\lambda_\Gamma = 0$, 
$\overline\mu_\Gamma = \overline\lambda_\Gamma = 1$,
$\overline\mu_\Gamma = 0$, $\overline\lambda_\Gamma = 1$
(clockwise from top left). 
}
\label{fig:3drho0}
\end{figure}%

\subsubsection{Rising bubble} \label{sec:622}
Here we consider the natural 3d analogue of the problem in \S\ref{sec:612}.
To this end, we let $\Omega =
(0,1) \times (0,1) \times (0.2)$ with 
$\partial_1\Omega = [0,1] \times [0,1] \times \{0,2\}$ and 
$\partial_2\Omega = \partial\Omega \setminus \partial_1\Omega$.
Moreover, we set $T=3$, $\Gamma_0 = \{ \vec z \in \R^3 : |\vec z -
(\frac12, \frac12, \frac12)^T| = \frac14\}$, and choose all the remaining
parameters as in \S\ref{sec:612}; recall e.g.\ (\ref{eq:Hysing1}).
As in the 2d equivalent, the bubble rises due to density difference against the
direction of gravity. In the process, the surfactant and the surface mass
accumulate at the bottom of the bubble. We show the concentrations of these two
quantities in Figure~\ref{fig:3dbubble}.
\begin{figure}
\center
\includegraphics[angle=-0,width=0.4\textwidth]{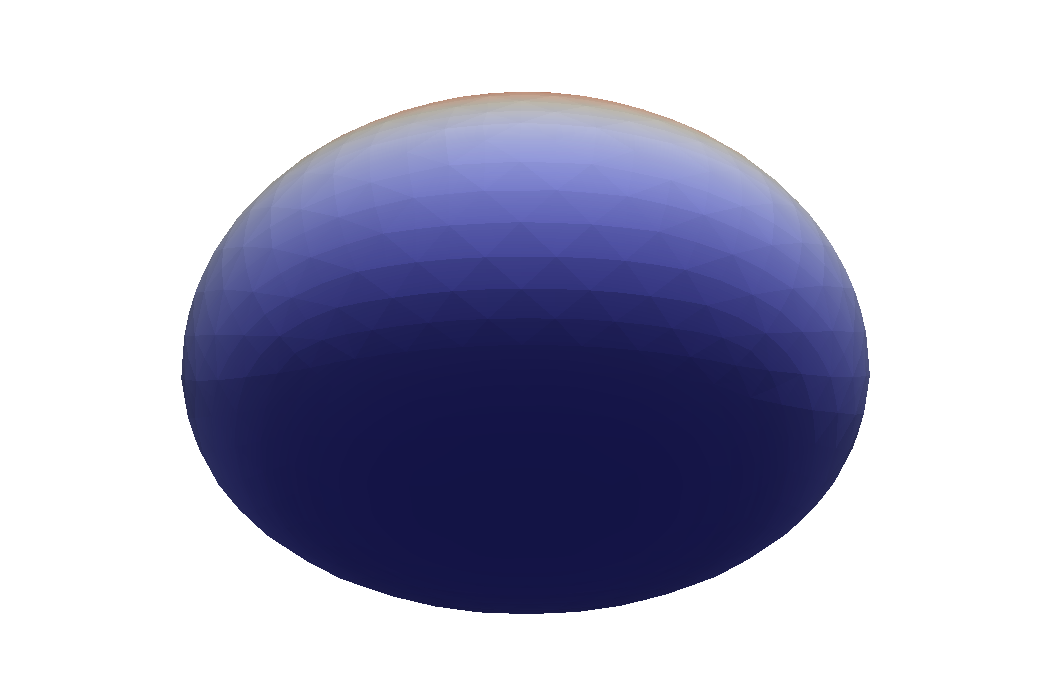}
\qquad\qquad
\includegraphics[angle=-0,width=0.4\textwidth]{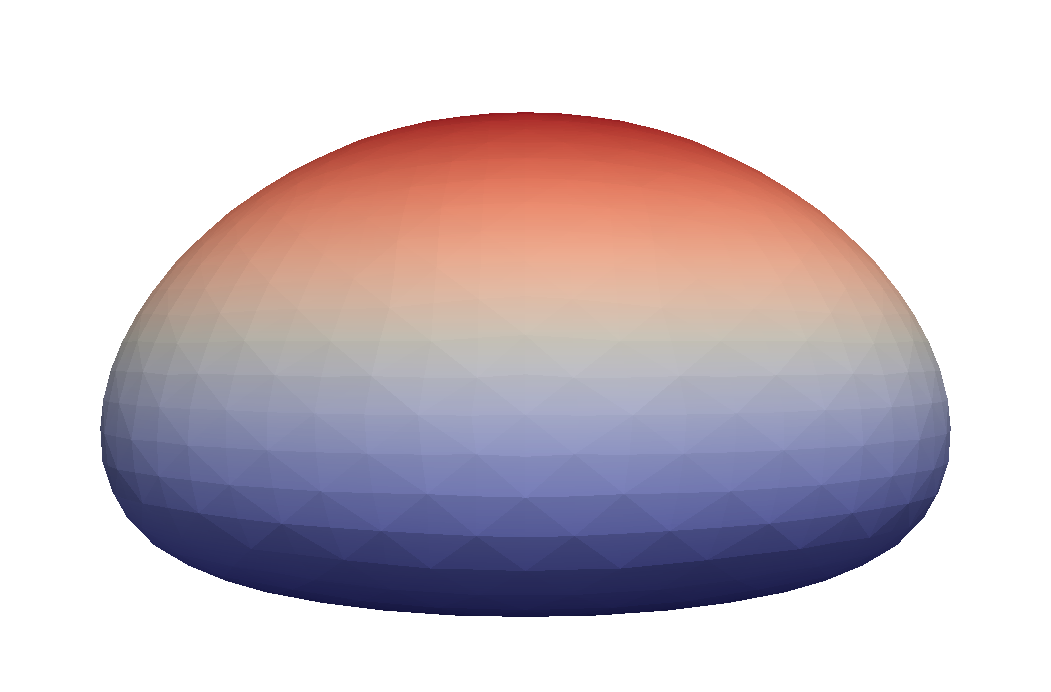}
\includegraphics[angle=-0,width=0.4\textwidth]{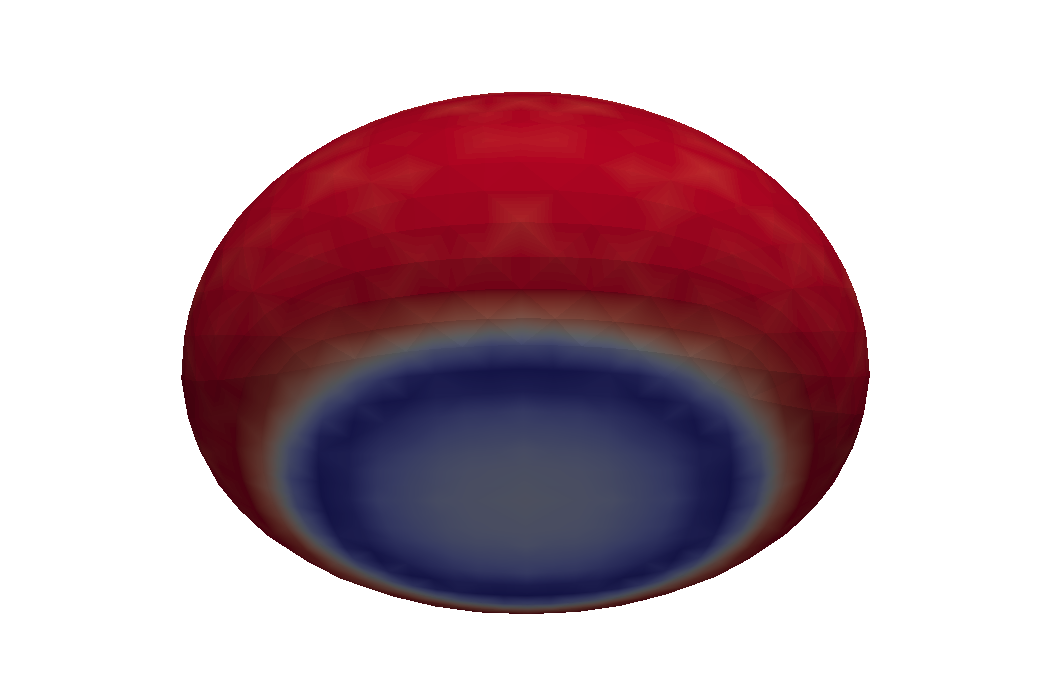}
\qquad\qquad
\includegraphics[angle=-0,width=0.4\textwidth]{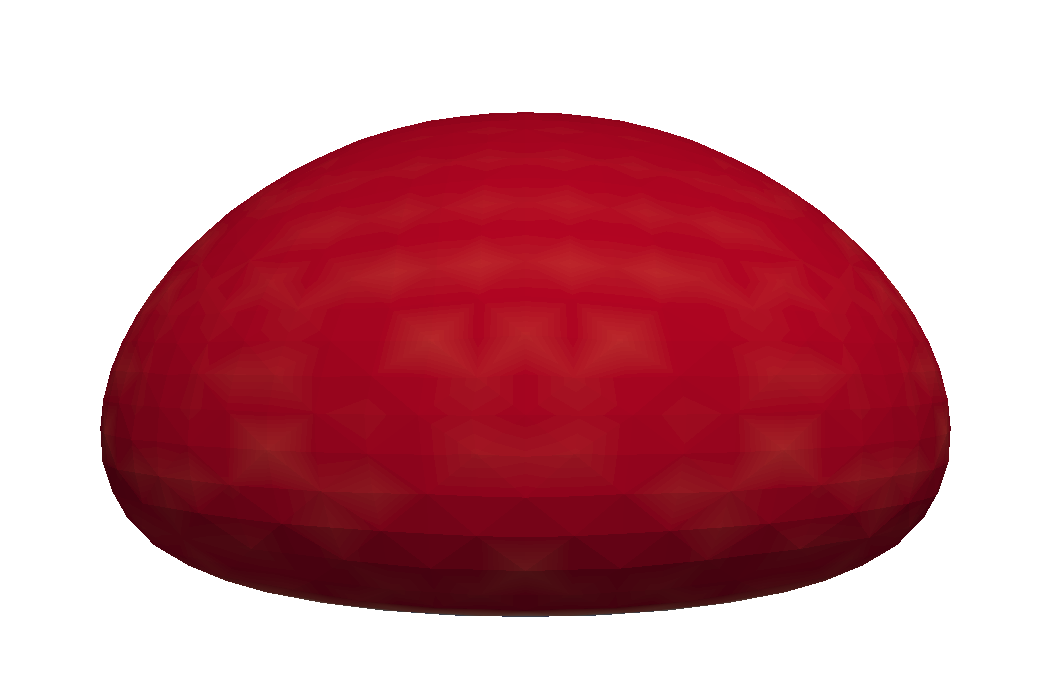}
\caption{
The surfactant concentration $\Psi^m$ and the surface mass $\rho^m_\Gamma$ 
on $\Gamma^m$ at
time $t=3$. The top row shows $\Psi^m$, with the colour ranging from red 
(0.3) to blue (0.6). The bottom row shows 
$\rho^m_\Gamma$, with the colour ranging from red (0) to blue (6.5).
}
\label{fig:3dbubble}
\end{figure}%

\def\soft#1{\leavevmode\setbox0=\hbox{h}\dimen7=\ht0\advance \dimen7
  by-1ex\relax\if t#1\relax\rlap{\raise.6\dimen7
  \hbox{\kern.3ex\char'47}}#1\relax\else\if T#1\relax
  \rlap{\raise.5\dimen7\hbox{\kern1.3ex\char'47}}#1\relax \else\if
  d#1\relax\rlap{\raise.5\dimen7\hbox{\kern.9ex \char'47}}#1\relax\else\if
  D#1\relax\rlap{\raise.5\dimen7 \hbox{\kern1.4ex\char'47}}#1\relax\else\if
  l#1\relax \rlap{\raise.5\dimen7\hbox{\kern.4ex\char'47}}#1\relax \else\if
  L#1\relax\rlap{\raise.5\dimen7\hbox{\kern.7ex
  \char'47}}#1\relax\else\message{accent \string\soft \space #1 not
  defined!}#1\relax\fi\fi\fi\fi\fi\fi}


\begin{thebibliography}{}

\bibitem[Arroyo and DeSimone(2007)Arroyo and DeSimone]{ArroyoS07}
Arroyo, M. and DeSimone, A. (2007).
\newblock Dynamics of fluid membranes and budding of vesicles.
\newblock {\em PAMM\/}, {\bf 7}(1), 1050709--1050710.

\bibitem[B{\"a}nsch(2001)B{\"a}nsch]{Bansch01}
B{\"a}nsch, E. (2001).
\newblock Finite element discretization of the {N}avier--{S}tokes equations
  with a free capillary surface.
\newblock {\em Numer. Math.}, {\bf 88}(2), 203--235.

\bibitem[Barrett and N\"urnberg(2004)Barrett and N\"urnberg]{surf2d}
Barrett, J.~W. and N\"urnberg, R. (2004).
\newblock Convergence of a finite-element approximation of surfactant spreading
  on a thin film in the presence of van der {W}aals forces.
\newblock {\em IMA J. Numer. Anal.}, {\bf 24}(2), 323--363.

\bibitem[Barrett {\em et~al.}(2003)Barrett, Garcke, and N\"urnberg]{surf}
Barrett, J.~W., Garcke, H., and N\"urnberg, R. (2003).
\newblock Finite element approximation of surfactant spreading on a thin film.
\newblock {\em SIAM J. Numer. Anal.}, {\bf 41}(4), 1427--1464.

\bibitem[Barrett {\em et~al.}(2007)Barrett, Garcke, and N\"urnberg]{triplej}
Barrett, J.~W., Garcke, H., and N\"urnberg, R. (2007).
\newblock A parametric finite element method for fourth order geometric
  evolution equations.
\newblock {\em J. Comput. Phys.}, {\bf 222}(1), 441--462.

\bibitem[Barrett {\em et~al.}(2008)Barrett, Garcke, and N\"urnberg]{gflows3d}
Barrett, J.~W., Garcke, H., and N\"urnberg, R. (2008).
\newblock On the parametric finite element approximation of evolving
  hypersurfaces in {${\mathbb R}^3$}.
\newblock {\em J. Comput. Phys.}, {\bf 227}(9), 4281--4307.

\bibitem[Barrett {\em et~al.}(2013a)Barrett, Garcke, and N\"urnberg]{spurious}
Barrett, J.~W., Garcke, H., and N\"urnberg, R. (2013a).
\newblock Eliminating spurious velocities with a stable approximation of
  viscous incompressible two-phase {S}tokes flow.
\newblock {\em Comput. Methods Appl. Mech. Engrg.}, {\bf 267}, 511--530.

\bibitem[Barrett {\em et~al.}(2013b)Barrett, Garcke, and N\"urnberg]{tpfs}
Barrett, J.~W., Garcke, H., and N\"urnberg, R. (2013b).
\newblock On the stable numerical approximation of two-phase flow with
  insoluble surfactant.
\newblock \url{http://arxiv.org/abs/1311.4432}.

\bibitem[Barrett {\em et~al.}(2013c)Barrett, Garcke, and N\"urnberg]{fluidfbp}
Barrett, J.~W., Garcke, H., and N\"urnberg, R. (2013c).
\newblock A stable parametric finite element discretization of two-phase
  {N}avier--{S}tokes flow.
\newblock \url{http://arxiv.org/abs/1308.3335}.

\bibitem[Bothe and Pr{\"u}ss(2010)Bothe and Pr{\"u}ss]{BotheP10}
Bothe, D. and Pr{\"u}ss, J. (2010).
\newblock On the two-phase {N}avier-{S}tokes equations with
  {B}oussinesq--{S}criven surface fluid.
\newblock {\em J. Math. Fluid Mech.}, {\bf 12}(1), 133--150.

\bibitem[Boussinesq(1913)Boussinesq]{Boussinesq13}
Boussinesq, M.~J. (1913).
\newblock Sur l'existence d'une viscosit{\'e} superficielle, dans la mince
  couche de transition s{\'e}parant un liquide d'un autre fluide contigu.
\newblock {\em Ann. Chim. Phys.}, {\bf 29}, 349--357.

\bibitem[Cheng and Fries(2012)Cheng and Fries]{ChengF12}
Cheng, K.-W. and Fries, T.-P. (2012).
\newblock X{FEM} with hanging nodes for two-phase incompressible flow.
\newblock {\em Comput. Methods Appl. Mech. Engrg.}, {\bf 245--246}, 290--312.

\bibitem[Deckelnick {\em et~al.}(2005)Deckelnick, Dziuk, and
  Elliott]{DeckelnickDE05}
Deckelnick, K., Dziuk, G., and Elliott, C.~M. (2005).
\newblock Computation of geometric partial differential equations and mean
  curvature flow.
\newblock {\em Acta Numer.}, {\bf 14}, 139--232.

\bibitem[Dziuk(1991)Dziuk]{Dziuk91}
Dziuk, G. (1991).
\newblock An algorithm for evolutionary surfaces.
\newblock {\em Numer. Math.}, {\bf 58}(6), 603--611.

\bibitem[Dziuk and Elliott(2013)Dziuk and Elliott]{DziukE13}
Dziuk, G. and Elliott, C.~M. (2013).
\newblock Finite element methods for surface {PDE}s.
\newblock {\em Acta Numer.}, {\bf 22}, 289--396.

\bibitem[Ganesan and Tobiska(2009)Ganesan and Tobiska]{GanesanT09a}
Ganesan, S. and Tobiska, L. (2009).
\newblock A coupled arbitrary {L}agrangian--{E}ulerian and {L}agrangian method
  for computation of free surface flows with insoluble surfactants.
\newblock {\em J. Comput. Phys.}, {\bf 228}(8), 2859--2873.

\bibitem[Girault and Raviart(1986)Girault and Raviart]{GiraultR86}
Girault, V. and Raviart, P.-A. (1986).
\newblock {\em Finite Element Methods for {N}avier--{S}tokes\/}.
\newblock Springer-Verlag, Berlin.

\bibitem[Gro{\ss} and Reusken(2011)Gro{\ss} and Reusken]{GrossR11}
Gro{\ss}, S. and Reusken, A. (2011).
\newblock {\em Numerical methods for two-phase incompressible flows\/},
  volume~40 of {\em Springer Series in Computational Mathematics\/}.
\newblock Springer-Verlag, Berlin.

\bibitem[Hirt and Nichols(1981)Hirt and Nichols]{HirtN81}
Hirt, C.~W. and Nichols, B.~D. (1981).
\newblock Volume of fluid {(VOF)} method for the dynamics of free boundaries.
\newblock {\em J. Comput. Phys.}, {\bf 39}(1), 201--225.

\bibitem[Hysing {\em et~al.}(2009)Hysing, Turek, Kuzmin, Parolini, Burman,
  Ganesan, and Tobiska]{HysingTKPBGT09}
Hysing, S., Turek, S., Kuzmin, D., Parolini, N., Burman, E., Ganesan, S., and
  Tobiska, L. (2009).
\newblock Quantitative benchmark computations of two-dimensional bubble
  dynamics.
\newblock {\em Internat. J. Numer. Methods Fluids\/}, {\bf 60}(11), 1259--1288.

\bibitem[Jemison {\em et~al.}(2013)Jemison, Loch, Sussman, Shashkov, Arienti,
  Ohta, and Wang]{JemsionLSSMMW13}
Jemison, M., Loch, E., Sussman, M., Shashkov, M., Arienti, M., Ohta, M., and
  Wang, Y. (2013).
\newblock A {C}oupled {L}evel {S}et-{M}oment of {F}luid {M}ethod for
  {I}ncompressible {T}wo-{P}hase {F}lows.
\newblock {\em J. Sci. Comput.}, {\bf 54}(2-3), 454--491.

\bibitem[Lai {\em et~al.}(2008)Lai, Tseng, and Huang]{LaiTH08}
Lai, M.-C., Tseng, Y.-H., and Huang, H. (2008).
\newblock An immersed boundary method for interfacial flows with insoluble
  surfactant.
\newblock {\em J. Comput. Phys.}, {\bf 227}(15), 7279--7293.

\bibitem[Reusken and Zhang(2013)Reusken and Zhang]{ReuskenZ13}
Reusken, A. and Zhang, Y. (2013).
\newblock Numerical simulation of incompressible two-phase flows with a
  {B}oussinesq--{S}criven interface stress tensor.
\newblock {\em International Journal for Numerical Methods in Fluids\/}.
\newblock (to appear, DOI: 10.1002/fld.3835).

\bibitem[Sagis(2011)Sagis]{Sagis11}
Sagis, L. M.~C. (2011).
\newblock Dynamic properties of interfaces in soft matter: {E}xperiments and
  theory.
\newblock {\em Rev. Mod. Phys.}, {\bf 83}(4), 1367--1403.

\bibitem[Schmidt and Siebert(2005)Schmidt and Siebert]{Alberta}
Schmidt, A. and Siebert, K.~G. (2005).
\newblock {\em Design of Adaptive Finite Element Software: The Finite Element
  Toolbox {ALBERTA}\/}, volume~42 of {\em Lecture Notes in Computational
  Science and Engineering\/}.
\newblock Springer-Verlag, Berlin.

\bibitem[Scriven(1960)Scriven]{Scriven60}
Scriven, L.~E. (1960).
\newblock Dynamics of a fluid interface: {E}quation of motion for {N}ewtonian
  surface fluids.
\newblock {\em Chem. Eng. Sci.}, {\bf 12}(2), 98--108.

\bibitem[Slattery {\em et~al.}(2007)Slattery, Sagis, and Oh]{SlatterySO07}
Slattery, J.~C., Sagis, L., and Oh, E.-S. (2007).
\newblock {\em Interfacial transport phenomena\/}.
\newblock Springer, New York, second edition.

\bibitem[Sussman and Ohta(2009)Sussman and Ohta]{SussmanO09}
Sussman, M. and Ohta, M. (2009).
\newblock A stable and efficient method for treating surface tension in
  incompressible two-phase flow.
\newblock {\em SIAM J. Sci. Comput.}, {\bf 31}(4), 2447--2471.

\bibitem[Tryggvason {\em et~al.}(2001)Tryggvason, Bunner, Esmaeeli, Juric,
  Al-Rawahi, Tauber, Han, Nas, and Jan]{Tryggvason_etal01}
Tryggvason, G., Bunner, B., Esmaeeli, A., Juric, D., Al-Rawahi, N., Tauber, W.,
  Han, J., Nas, S., and Jan, Y.-J. (2001).
\newblock A front-tracking method for the computations of multiphase flow.
\newblock {\em J. Comput. Phys.}, {\bf 169}(2), 708--759.

\end{thebibliography}
\end{document}